\documentclass[1pt]{article}
\usepackage[latin9]{inputenc}
\usepackage{geometry}
\geometry{verbose}
\usepackage{xcolor}
\usepackage{mathrsfs}
\usepackage{amsmath}
\usepackage{amssymb}
\usepackage[unicode=true,pdfusetitle,
 bookmarks=true,bookmarksnumbered=false,bookmarksopen=false,
 breaklinks=false,pdfborder={0 0 1},backref=false,colorlinks=false]
 {hyperref}

\makeatletter

\DeclareTextSymbolDefault{\textquotedbl}{T1}

\numberwithin{equation}{section}

\@ifundefined{date}{}{\date{}}
\usepackage{amsthm}\usepackage{amsfonts}\usepackage{cases}\usepackage{mathrsfs}
\usepackage{url}
\usepackage{authblk}

\allowdisplaybreaks

\theoremstyle{plain}
\newtheorem{thm}{Theorem}[section]
\newtheorem{pro}[thm]{Problem}\newtheorem{lem}[thm]{Lemma}\newtheorem{prop}[thm]{Proposition}\theoremstyle{definition}
\newtheorem{defn}{Definition}[section]
\newtheorem{ass}{Assumption}[section]
\newtheorem{rmk}{Remark}[section]

\@addtoreset{equation}{section}

\DeclareMathOperator*{\essinf}{ess\,inf}

\makeatother

\begin{document}

\title{Backward Stochastic Riccati Equation with Jumps associated with Stochastic
Linear Quadratic Optimal Control with Jumps and Random Coefficients
\thanks{This work was supported by the National Natural Science Foundation
of China (No.11871121, 11701369) and the Natural Science Foundation
of Zhejiang Province for Distinguished Young Scholar (No.LR15A010001).
The second auther gratefully acknowledges finincial support from Région
Pays de la Loire throught the grant PANORisk.}}

\author{Fu Zhang\thanks{College of Science, Shanghai of University for Science and Technology,
Shanghai, 200093, China. Email: fugarzhang@163.com. }, Yuchao Dong\thanks{Universit\'e d'Angers, D\'epartement de Math\'ematiques, 2, Bd
Lavoisier, 49045 Angers Cedex 01, France and Department of Mathematics,
Fudan University, Shanghai 200433, China, Email: ycdong@fudan.edu.cn}, Qingxin Meng\thanks{Corresponding author. {\small{}Department of Mathematics, Huzhou University,
Zhejiang 313000, China. }E-mail: mqx@zjhu.edu.cn. } }
\maketitle
\begin{abstract}
In this paper, we investigate the solvability of matrix valued Backward
stochastic Riccati equations with jumps (BSREJ), which is associated
with a stochastic linear quadratic (SLQ) optimal control problem with
random coefficients and driven by both Brownian motion and Poisson
jumps. By dynamic programming principle, Doob-Meyer decomposition
and inverse flow technique, the existence and uniqueness of the solution
for the BSREJ is established. The difficulties addressed to this issue
not only are brought from the high nonlinearity of the generator of
the BSREJ like the case driven only by Brownian motion, but also from
that i) the inverse flow of the controlled linear stochastic differential
equation driven by Poisson jumps may not exist without additional
technical condition, and ii) how to show the inverse matrix term involving
jump process in the generator is well-defined. Utilizing the structure
of the optimal problem, we overcome these difficulties and establish
the existence of the solution. In additional, a verification theorem
for BSREJ is given which implies the uniqueness of the solution.
\end{abstract}
\textbf{Keywords}: dynamic programming principle, Doob-Meyer decomposition,
stochastic differential equation, Poisson jump, backward stochastic
Riccati equation with jumps

\section{Introduction}

\subsection{Framework and Preliminary}

We start with a stochastic basis $(\Omega,\mathcal{F},\mathscr{F},\mathbb{P})$
with a finite time horizon $T<\infty$ and a filtration $\mathscr{F}:=\{{\mathscr{F}}_{t}|t\in[0,T]\}$
satisfying the usual conditions of right continuity and completeness,
such that we can and do take all semimartingales to have right continuous
paths with left limits. For simplicity, we assume that $\mathscr{F}_{0}$
is trivial and $\mathcal{F}=\mathscr{F}_{T}.$ Denote by ${\mathbb{E}}[\cdot]$
the expectation under $\mathbb{P}$. Conditional expectations with
respect to a sub-$\sigma$ algebra $\mathscr{G}$ of $\mathcal{F}$
are denoted by $\mathbb{E}^{\mathscr{G}}[\cdot].$ Let $\mathscr{B}(\Lambda)$
denote the Borel $\sigma$-algebra of the topological space $\Lambda.$
Let $W=\{W(t)=(W^{1}(t),W^{2}(t),\cdots,W^{d}(t))^{\top}|t\in[0,T]\}$
be a $d$-dimensional standard Brownian motion with respect to its
natural filtration under $\mathbb{P}$. Let $(\Lambda,\mathscr{B}(\Lambda))$
be a measurable space and $\nu$ a finite measure defined on it. Denote
by $\mu$ an integer-valued random measure 
\[
\mu(de,dt)=\{\mu(\omega,de,dt)|\omega\in\Omega\}
\]
on $([0,T]\times\Lambda,\mathscr{B}([0,T])\otimes\mathscr{B}(\Lambda))$
induced by a stationary $\mathscr{F}$-Poisson point process $(p_{t})_{t\geq0}$
on $\Lambda$ with the Lévy measure $\nu.$ Let $\tilde{\mu}(de,dt):={\mu}(de,dt)-\nu(de)dt$
be the compensated Poisson random measure. Suppose that the Brownian
motion $W$ and the random measure $\tilde{\mu}(de,dt)$ are stochastically
independent under $\mathbb{P}$. Without loss of general assumptions,
we assume that the filtration $\mathscr{F}$ is the $\mathbb{P}$-augmentation
of the natural filtration generated by the Brownian motion and the
Poisson random measure.

Let $\mathscr{P}$ be the $\mathscr{F}$-predictable $\sigma$-field
on $\Omega\times[0,T]$ and denote
\[
\mathscr{\tilde{P}}:=\mathscr{P}\otimes\mathscr{B}(\Lambda).
\]
 For a $\mathscr{\tilde{P}}$-measurable function $U$ on $\tilde{\Omega}$$,$
define its integration with respect to $\mu$ (analogously for $\nu\otimes{\rm Leb}$)
by 

\begin{equation}
\int_{0}^{T}\int_{\Lambda}U(s,e)\mu(de,ds)(\omega)=\left\{ \begin{array}{rl}
{\displaystyle \int_{0}^{T}\int_{\Lambda}U(\omega,s,e)\mu(\omega,ds,de),\quad} & \mbox{if finitely defined},\\
+\infty,\qquad\qquad\qquad\quad & \mbox{otherwise}.
\end{array}\right.\label{eq:3.5}
\end{equation}
 The random measure and stochastic integrals can be referred to \cite{IKWA89,protter2005stochastic}
for details. 

\subsection{Introduction on BSREJ}

Denote by $\mathbb{S}^{n}$ the space of all $n\times n$ symmetric
matrices and by $\mathbb{S}_{+}^{n}$ the space of all $n\times n$
nonnegative matrices. Throughout this paper, the following standard
assumptions holds. Suppose that $A,B,C,D,E,F,Q,N$ and $M$ are given
random mappings such that $A:[0,T]\times\Omega\rightarrow\mathbb{R}^{n\times n},B:[0,T]\times\Omega\rightarrow\mathbb{R}^{n\times m};C^{i}:[0,T]\times\Omega\rightarrow\mathbb{R}^{n\times n},D^{i}:[0,T]\times\Omega\rightarrow\mathbb{R}^{n\times m},i=1,2,\cdots,d;E:[0,T]\times\Omega\times\Lambda\rightarrow\mathbb{R}^{n\times n};F:[0,T]\times\Omega\times\Lambda\rightarrow\mathbb{R}^{n\times m};Q:[0,T]\times\Omega\rightarrow\mathbb{R}^{n\times n},N:[0,T]\times\Omega\rightarrow\mathbb{R}^{m\times m};M:\Omega\rightarrow\mathbb{R}^{n\times n}$
satisfies :

\begin{ass}\label{ass:4.1} $A,B,C,D,N$ and $Q$ are uniformly bounded
$\mathscr{F}$-predictable stochastic processes. $E$ and $F$ are
uniformly bounded $\mathscr{\tilde{P}}$-measurable stochastic processes.
$M$ is a uniformly bounded ${\mathscr{F}}_{T}$-measurable random
variable. Moreover, for a.s. a.e. $(t,\omega)\in[0,T]\times\Omega$,
$Q\in\mathbb{S}_{+}^{n}$ and $N\in\mathbb{S}_{+}^{m}$. $M\in\mathbb{S}_{+}^{n}$
for a.e. $\omega\in\Omega$. And $N$ is uniformly positive, i.e.
for a.s. a.e. $(t,\omega)\in[0,T]\times\Omega,$ $N(t)\geq\delta I$
for some positive constant $\delta$.\end{ass} 

For any $(t,K,L,R(\cdot))\in[0,T]\times\mathbb{S}^{n}\times(\mathbb{S}^{n})^{d}\times{\mathcal{M}}^{\nu,2}(\mathbb{S}^{n})$
(see the meaning of the notations in subsection \ref{subsec:Notations}),
define 
\begin{eqnarray}
\mathscr{N}(t,K,R(\cdot)) & := & N(t)+{\displaystyle \sum_{i=1}^{d}(D^{i})^{*}(t)KD^{i}(t)+\int_{\Lambda}F^{*}(t,e)(K+R(e))F(t,e)\nu(de)},\nonumber \\
\mathscr{M}(t,K,L,R(\cdot)) & := & KB(t)+\sum_{i=1}^{d}L^{i}D^{i}(t)+\sum_{i=1}^{d}(C^{i})^{*}(t)KD^{i}(t)\nonumber \\
 &  & +{\displaystyle \int_{\Lambda}\Big[E^{*}(t,e)KF(t,e)+(I+E^{*}(t,e))R(e)F(t,e)\Big]\nu(de),}\\
G(t,K,L,R(\cdot)) & := & A^{*}(t)K+KA(t)+\sum_{i=1}^{d}L^{i}C^{i}(t)+\sum_{i=1}^{d}(C^{i})^{*}(t)L^{i}+\sum_{i=1}^{d}(C^{i})^{*}(t)KC^{i}(t)\nonumber \\
 &  & +{\displaystyle \int_{\Lambda}R(e)E(t,e)\nu(de)+{\displaystyle \int_{\Lambda}E^{*}(t,e)R(e)\nu(de)}}\nonumber \\
 &  & +{\displaystyle \int_{\Lambda}E^{*}(t,e)(K+R(e))E(t,e)\nu(de)}\nonumber \\
 &  & +Q(t)-\mathscr{M}(t,K,L,R(\cdot))\mathscr{N}^{-1}(t,K,R(\cdot))\mathscr{M}^{*}(t,K,L,R(\cdot)),\nonumber 
\end{eqnarray}
where $I$ is $n$-th order identity matrix and $*$ denotes the transpose
of a matrix.

With the notations defined above, we introduce the following backward
stochastic integral-differential equation driven by Brownian motion
$W$ and Poisson random measure $\tilde{\mu}$ : 
\begin{equation}
\left\{ \begin{array}{lll}
dK(t) & = & -G(t,K(t-),L(t),R(t,\cdot))dt+{\displaystyle \sum_{i=1}^{d}L^{i}(t)dW_{t}^{i}+{\displaystyle \int_{\Lambda}R(t,e)\tilde{\mu}(de,dt),}}\\
K(T) & = & M,\quad L(t):=(L^{1}(t),\cdots,L^{d}(t)).
\end{array}\right.\label{eq:Riccati}
\end{equation}
with the unknown triple of stochastic processes $(K,L,R).$ Now we
give the definition of the solution to BSREJ \eqref{eq:Riccati} as
follows.

\begin{defn} \label{def:solution}A triplet of stochastic processes
$(K,L,R)$ valued in $\mathbb{S}^{n}\times(\mathbb{S}^{n})^{d}\times{\cal M}^{\nu,2}(\mathbb{S}^{n})$
with $K$ being $\mathscr{F}$-progressive measurable, $L$ $\mathscr{F}$-predictable
and $R$ $\tilde{\mathscr{P}}$-measurable is called a solution of
BSREJ \eqref{eq:Riccati} if

(i)$\int_{0}^{T}|G(t,K(t-),L(t),R(t))|dt+\int_{0}^{T}\int_{\Lambda}|R(t,e)|^{2}\nu(de)dt+\int_{0}^{T}|L(t)|^{2}dt<\infty,a.s.;$

(ii) $\mathscr{N}(t,K(t-),R(t))$ is positive definite a.s. a.e.; 

(iii) for all $t\in[0,T],$ it a.e. holds that
\begin{equation}
K(t)=M+\int_{t}^{T}G(s,K(s-),L(s),R(s))ds-\int_{t}^{T}\sum_{i=1}^{d}L^{i}(s)dW_{s}^{i}-\int_{t}^{T}\int_{\Lambda}R(s,e)\tilde{\mu}(de,ds).\label{eq:integ-Riccati}
\end{equation}

\end{defn}

This is the so-called BSREJ associated with a linear quadratic optimal
control problem with jumps formulated in Section 2 (See Problem \ref{pro:4.1}).
When the coefficients $A,B,C,D,E,F,Q,N$ are all deterministic, then
$L^{1}=\cdots=L^{d}=R=0$, and the BSREJ \eqref{eq:Riccati} degenerates
to a deterministic Riccati integral-differential equation (see \cite{Wonham1968On}
for the case without jumps). If $D=0$ and $F=0$, i.e. the corresponding
controlled differential system does not contain control in martingale
integration terms, and the second and third unknown variables $(L,R)$
only have a linear structure in the generator $G$. And in this case
the solvability of BSREJ could be covered by the result of Meng \cite{meng2014general}.
Due to that the martingale integration parts of corresponding controlled
system \eqref{eq:4.1} contains control variable, and the system has
non-Markovian structure, the associated BSREJ \eqref{eq:Riccati}
is highly nonlinear with respect to the unknown triple of $(K,L,R)$.

\subsection{Developments of BSRE and Contributions of this Paper}

The study of BSREs had quite a long history. In the case of BSREs
driven by only Brownian motion $W,$ \eqref{eq:Riccati} will reduce
to the following form: {\footnotesize{}
\begin{equation}
\left\{ \begin{array}{ll}
dK(t)= & -\bigg[A^{*}(t)K(t)+K(t)A(t)+\sum_{i=1}^{d}L^{i}(t)C^{i}(t)+\sum_{i=1}^{d}(C^{i})^{*}(t)L^{i}(t)+\sum_{i=1}^{d}(C^{i})^{*}(t)K(t)C^{i}(t)+Q\\
 & -[K(t)B(t)+\sum_{i=1}^{d}L^{i}(t)D^{i}(t)+\sum_{i=1}^{d}(C^{i})^{*}(t)K(t)D^{i}(t)][N(t)+\sum_{i=1}^{d}(D^{i})^{*}(t)K(t)D^{i}(t)]^{-1}\\
 & \quad\cdot[K(t)B(t)+\sum_{i=1}^{d}L^{i}(t)D^{i}(t)+\sum_{i=1}^{d}(C^{i})^{*}(t)K(t)D^{i}(t)]^{*}\bigg]dt+{\displaystyle \sum_{i=1}^{d}L^{i}(t)dW_{t}^{i},}\\
K(T)= & M,\quad L(t):=(L^{1}(t),\cdots,L^{d}(t)).
\end{array}\right.\label{eq:1.5}
\end{equation}
}Historically speaking, the French mathematician Bimut \cite{Bismut1976linear}
firstly proposed the definition of the adapted solution to \eqref{eq:1.5}
, and due to the difficulty of its solvability, it is listed as an
open problem by Peng \cite{peng1999}. Until 2013, Tang \cite{tang2003general}
generally solved this open problem applying the stochastic maximum
principle and using the technique of stochastic flow for the associated
stochastic Hamiltonian system. In 2015, Tang \cite{tang2015dynamic}
gives the second but more comprehensive (seeming much simpler, by
Doob-Meyer decomposition theorem and Dynamic programming principle)
method to solve the general BSREs.

For earlier history on BSRE, we refer to Peng \cite{Peng1992Stochastic},
Tang and Kohlmann \cite{KohlmannTang2001New,KohlmannTang2003Global},
Tang \cite{tang2003general} and the plenary lecture reported by Peng
\cite{peng2010backward} at the ICM in 2010. For the indefinite BSRE,
the reader can be referred to \cite{ChenZhou1998Stochastic,ZhouLi2000Continuous,KohlmannTang2003mini,KohlmannTang2003Multi,QianZhou2013Existence,Du2015Solvability}.

Equation \eqref{eq:Riccati} is very different from equation \eqref{eq:1.5}.
From a direct viewpoint, Equation \eqref{eq:Riccati} is driven by
both a Brownian motion $W$ and an additional compensated Poisson
measure $\tilde{\mu}$. From an essential viewpoint, not only the
first unknown element $K$ and but also the third unknown element
$R$ are included in the nonlinear term $\mathscr{N}(t,K(t-),R(t,\cdot))^{-1}$
in BSREJ \eqref{eq:Riccati}. For the BSRE driven only by a Brownian
motion, the nonlinear term $\mathscr{N}(t,K(t-),R(t,\cdot))^{-1}$
degenerates into $\big[N(t)+D^{i*}(t)K(t)D^{i}(t)\big]^{-1}$ which
is well defined since in that case we can show that $K$ is continuous
and nonnegative. But for the BSREJ \eqref{eq:Riccati}, one only expects
to prove the square integrability of the third unknown element $R$,
but this regularity is difficult to derive the non-negativity of matrix
$\mathscr{N}(t,K(t-),R(t,\cdot))$. How to show $\mathscr{N}(t,K(t-),R(t,\cdot))$
keeping to be positive is key to give the solvability of BSREJ \eqref{eq:Riccati}.

As far as we know, there is very few literature related to BSREJ.
In 2008, under partial information framework, Hu and {Ø}ksendal
\cite{huoksendal2008partial} studied the one-dimensional SLQ problem
with random coefficients and Poisson jumps, where they presented the
state feedback representation of the optimal control by an one-dimensional
BSREJ, but the authors did not discuss the wellposeness of the solution
to BSREJ. \cite{meng2014general} is the first work addressed to the
study of high dimensional SLQ with random coefficients, the author
formally derived BSREJ \eqref{eq:Riccati} and utilized Bellman's
principle of quasi-linearization to solve a special form of BSREJ
\eqref{eq:Riccati} in which the generator $G$ only linearly depends
on $L$ and $R$. Li et al \cite{LiWuYu2018indefinite} used so-called
relax compensator to describe indefinite BSREJ and investigated the
solvability BSREJ in some special cases. 

The contributions of our paper is to establish the solvability of
the general BSREJ \eqref{eq:Riccati}. Adapting the method proposed
by Tang \cite{tang2015dynamic}, with the help of control problem
and dynamic programming principle, we use the value function and Doob-Meyer
decomposition to construct the triple process $(K(t),L(t),R(t,\cdot))$
and later show it is nothing but the solution of BSREJ \eqref{eq:Riccati}.
Conversely, we also could utilize the solution of BSREJ \eqref{eq:Riccati}
to depict the optimal control in a feedback form. 

One advantage of above method is to avoid the proof of the positive
definiteness of the matrix process $\mathscr{N}$ at the beginning.
In our approach, we show not only the positive definiteness of of
$\mathscr{N}$, but also that of $\int_{\Lambda}F^{*}(t,e)(K(t-)+R(t,e))F(t,e)\nu(de)$.
The proof is based on an observation that: $\int_{\Lambda}R(t,e)\mu(de,\{t\})$
is nothing but the jump measure of $K(t)$. Hence the value $\int_{\Lambda}F^{*}(t,e)(K(t-)+R(t,e))F\mu(de,\{t\})$
vanishes except at the jump time, then it coincides with
\begin{equation}
\int_{\Lambda}F^{*}(t,e)K(t)F(t,e)\mu(de,\{t\})\label{eq:intro-+}
\end{equation}
 since the jump $\Delta K_{t}=K(t)-K(t-)=R(t,\Delta p_{t})$, where
$\Delta p_{t}$ is the jump of underlying Poisson process. Obviously
\eqref{eq:intro-+} is positive once the positive definiteness of
$\mathscr{N}$ obtained.

The inverse flow of the controlled stochastic differential equation
on interval $[0,T]$ is a key technique in Tang's method in \cite{tang2015dynamic}
to give the representation of the BSREJ. In some literature about
stochastic differential with jumps \cite{fujiwarakunita@1985stochastic,Kunita2004stochastic,tanghou2002optimal,chentang2015semi},
the authors give a technical condition to guarantee its inverse flow
exists on $[0,T]$ (using the notation of SDE \eqref{eq:4.1})
\begin{equation}
I+E(t,e)\geq\delta I,\quad{\rm a.e.a.s.},\:{\rm for}\:{\rm some}\:\delta>0.\label{eq:not-neces-condit}
\end{equation}
But this condition is not necessary for the LQ control problem. In
our approach, to overcome the difficulty brought from the absence
of condition \eqref{eq:not-neces-condit}, we deal with SDE \eqref{eq:4.1}
in every stochastic sub-interval between every two adjacent jumping
time $(\!(\tau_{i},\tau_{i+1})\!)$, on which SDE \eqref{eq:linear-SDE}
has continuous trajectory solution and subsequently inverse flow without
the help of condition \ref{eq:not-neces-condit}. Then we use the
semi-martingale property of $K$ to integrate all the sub-intervals
to obtain the representation of BSREJ on the whole interval $[0,T]$. 

The rest of this article is organized as follows. In Section \ref{sec:prel},
we introduce some useful notations, preliminary results and the SLQ
problem with jumps. In Section \ref{sec:DPP-semimartingle}, we list
the preliminary results and the controlled SLQ problem. Section \ref{sec:DPP-semimartingle}
gives some basic properties of the value function $V$, and also the
semimartingale property of $V$ by dynamic programming principle.
In Section \ref{sec:Exist}, with the help of results in Section \ref{sec:DPP-semimartingle}
we show the existence of BSREJ \eqref{eq:Riccati}. In Section \ref{sec:Verification},
we show the verification theorem which gives the uniqueness of the
solution for BSREJ, and use the solution of BSREJ to describe the
optimal control and valuation of the SLQ problem. 

\section{Preliminary Results and SLQ Problem\label{sec:prel}}

\subsection{Notations\label{subsec:Notations}}

Let $H$ be a Hilbert space. The inner product in $H$ is denoted
by $\langle\cdot,\cdot\rangle,$ and the norm in $H$ is denoted by
$|\cdot|_{H}$ or $|\cdot|$ if there is no danger of confusion. Let
$p\geq1.$ Let $\mathscr{T}$ denote the totality of all $\mathscr{F}$-stopping
times taking values in $[0,T].$ Define $\mathscr{T}_{\tau}:=\{\gamma\in\mathscr{T}:\gamma\geq\tau,\,\mathbb{P}{\rm -a.s.}\}$
for $\tau\in\mathscr{T}.$ Given $\tau\in\mathscr{T}$ and $\gamma\in\mathscr{F}_{\tau}$,
the following spaces will be frequently used in this paper: 
\begin{enumerate}
\item[$\bullet$] ${\mathcal{S}}_{\mathscr{F}}^{p}(\tau,\gamma;H)$: the set of all
$H$-valued $\mathscr{F}$-adapted right continuous left limit (RCLL)
processes $f\triangleq\{f(t,\omega),t\in[\![\tau,\gamma]\!]\}$ such
that $\|f\|_{{S}_{\mathscr{F}}^{p}(\tau,\gamma;H)}:=\bigg\{{\mathbb{E}}\bigg[\sup_{\tau\leq t\leq\gamma}|f(t)|_{H}^{p}\bigg]\bigg\}^{\frac{1}{p}}<\infty$;
\item[$\bullet$] ${\mathcal{M}}_{\mathscr{F}}^{p}(\tau,\gamma;H)$: the set of all
$H$-valued $\mathscr{F}$-progressively measurable processes $f\triangleq\{f(t,\omega),t\in[\![\tau,\gamma]\!]\}$
such that $\|f\|_{{\mathcal{M}}_{\mathscr{F}}^{p}(\tau,\gamma;H)}:=\bigg\{\mathbb{E}{\displaystyle \bigg[\int_{\tau}^{\gamma}|f(t)|_{H}^{p}dt\bigg]\bigg\}^{\frac{1}{p}}<\infty}$;
\item[$\bullet$] ${\cal M}_{\mathscr{F}}^{2,p}(\tau,\gamma;H)$: the set of all $H$-valued
$\mathscr{F}$-progressively measurable processes $f\triangleq\{f(t,\omega),t\in[\![\tau,\gamma]\!]\}$
such that $\|f\|_{{\cal M}_{\mathscr{F}}^{2,p}(\tau,\gamma;H)}:=\bigg\{{\mathbb{E}}\bigg[{\displaystyle \int_{\tau}^{\gamma}|f(t)|_{H}^{2}dt\bigg]^{\frac{p}{2}}\bigg\}^{\frac{1}{p}}<\infty}$;
\item[$\bullet$] ${\cal M}^{\nu,2}(H):$ the set of all H-valued measurable functions
$r\triangleq\{r(e),e\in\Lambda\}$ defined on the measure space $({\Lambda},\mathscr{B}(\Lambda),\nu)$
such that $\|r\|_{{\cal M}^{\nu,2}(H)}:=\sqrt{{\displaystyle \int_{\Lambda}|r(e)|_{H}^{2}\nu(de)}}<~\infty$;
\item[$\bullet$] ${\cal M}_{\mathscr{F}}^{\nu,2,p}(\tau,\gamma;H):$ the set of all
$H$-valued $\mathscr{\tilde{P}}$-measurable processes $r\triangleq\{r(t,\omega,e),(t,e)\in[\![\tau,\gamma]\!]\times\Lambda\}$
such that $\|r\|_{\mathcal{M}_{\mathscr{F}}^{\nu,2,p}(\tau,\gamma;H)}:=\bigg\{\mathbb{E}\bigg[{\displaystyle \int_{\tau}^{\gamma}\int_{\Lambda}}|r(t,e)|_{H}^{2}\nu(de)dt\bigg]^{\frac{p}{2}}\bigg\}^{\frac{1}{p}}<~\infty$;
\item[$\bullet$] ${\cal M}_{\mathscr{F}}^{\nu,p}(\tau,\gamma;H):$ the set of all
$H$-valued $\mathscr{\tilde{P}}$-measurable processes $r\triangleq\{r(t,\omega,e),(t,e)\in[\![\tau,\gamma]\!]\times\Lambda\}$
such that $\|r\|_{\mathcal{M}_{\mathscr{F}}^{\nu,p}(\tau,\gamma;H)}:=\bigg\{\mathbb{E}{\displaystyle \bigg[\int_{\tau}^{\gamma}\int_{\Lambda}|r(t,e)|_{H}^{p}\nu(de)dt\bigg]\bigg\}^{\frac{1}{p}}<~\infty}$;
\item[$\bullet$] $L^{p}(\Omega,{\mathscr{G}},\mathbb{P};H):$ the set of all $H$-valued
${\mathscr{G}}$-measurable random variable $\xi$ defined on $(\Omega,\mathcal{F},P)$
such that $\|\xi\|_{L^{p}(\Omega,{\mathscr{G}},P;H)}:=\{\mathbb{E}[|\xi|_{H}^{p}]\}^{\frac{1}{p}}$
where $\mathscr{G}$ is a subalgebra of $\mathcal{F}$. 
\end{enumerate}
In the following we recall a classical theorem for the \emph{essential
infimum} of a family of nonnegative random variables in a probability
space (see, e.g. Karatzas and Shreve \cite[Appendix A]{KaraShreve1998method}).

\begin{lem} \label{lem:2.1} Let $\mathscr{X}$ be a family of nonnegative
integrable random variables defined on a probability space $(\Omega,\mathcal{F},\mathbb{P}).$
Then there exists an $\mathcal{F}$-measurable random variable $X^{*}$
such that \\
 1. for all $X\in\mathscr{X},$ $X\geq X^{*}$ a.s.;\\
 2. if $Y$ is a random variable satisfying $X\geq Y$ a.s. for all
$X\in\mathscr{X},$ then $X^{*}\geq Y$ a.s.\\
 This random variable, which is unique a.s., is called the essential
infimum of $\mathscr{X},$ and is denoted by $\essinf\mathscr{X}$
or $\essinf_{X\in\mathscr{X}}X$. Furthermore, if $\mathscr{X}$ is
closed under pairwise minimum (i.e. $X,Y\in\mathscr{X}$ implies $X\wedge Y\in\mathscr{X}$),
then there exists a nondecreasing sequence $\{Z_{n}\}_{n\in\mathbb{N}}$
of random variables in $\mathscr{X}$ such that $X^{*}=\lim_{n\rightarrow\infty}Z_{n}$
a.s. Moreover, for any sub-algebra $\mathscr{G}$ of $\mathcal{F},$
the $\mathscr{G}$-conditional expectation is interchangeable with
the essential infimum: 
\[
\mathbb{E}[\essinf_{X\in\mathscr{X}}X|\mathscr{G}]=\essinf_{X\in\mathscr{X}}\mathbb{E}[X|\mathscr{G}].
\]

\end{lem}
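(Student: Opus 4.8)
The plan is to establish the result in the classical way, the only genuine subtlety being that an infimum of an \emph{uncountable} family of measurable functions need not be measurable; this is circumvented by passing to countable subfamilies. Concretely, let $\mathscr{D}$ denote the collection of all countable subfamilies $\mathscr{C}\subseteq\mathscr{X}$, and for $\mathscr{C}\in\mathscr{D}$ set $X_{\mathscr{C}}:=\inf_{X\in\mathscr{C}}X$, which is $\mathcal{F}$-measurable as a countable infimum. Since every member of $\mathscr{X}$ is nonnegative and integrable, we have $0\leq X_{\mathscr{C}}\leq X_{0}$ for any fixed $X_{0}\in\mathscr{C}$, so $m:=\inf_{\mathscr{C}\in\mathscr{D}}\mathbb{E}[X_{\mathscr{C}}]$ is a well-defined finite nonnegative number.

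Next I would pick a minimizing sequence $\mathscr{C}_{n}\in\mathscr{D}$ with $\mathbb{E}[X_{\mathscr{C}_{n}}]\downarrow m$ and take $\mathscr{C}^{*}:=\bigcup_{n}\mathscr{C}_{n}$, which is again countable, and define $X^{*}:=X_{\mathscr{C}^{*}}$. Because $\mathscr{C}^{*}\supseteq\mathscr{C}_{n}$ forces $X^{*}\leq X_{\mathscr{C}_{n}}$, one gets $\mathbb{E}[X^{*}]=m$. Property~1 then follows by testing against an arbitrary $X\in\mathscr{X}$: the family $\mathscr{C}^{*}\cup\{X\}$ lies in $\mathscr{D}$, so $\mathbb{E}[X^{*}\wedge X]\geq m=\mathbb{E}[X^{*}]$, while $X^{*}\wedge X\leq X^{*}$ pointwise; integrability then gives $X^{*}\wedge X=X^{*}$ a.s., i.e. $X\geq X^{*}$ a.s. Property~2 is immediate: if $Y\leq X$ a.s. for every $X\in\mathscr{X}$, then since $\mathscr{C}^{*}$ is countable the exceptional null sets may be unioned, whence $Y\leq\inf_{X\in\mathscr{C}^{*}}X=X^{*}$ a.s. Uniqueness follows formally from 1 and 2, since two candidates are then mutually greatest lower bounds, hence a.s. equal.

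For the \emph{Furthermore} part I would enumerate $\mathscr{C}^{*}=\{X_{k}\}_{k\geq1}$ and set $Z_{n}:=X_{1}\wedge\cdots\wedge X_{n}$. Closure of $\mathscr{X}$ under pairwise minimum gives $Z_{n}\in\mathscr{X}$ by induction, and $\{Z_{n}\}$ is monotone, decreasing to $\inf_{k}X_{k}=X^{*}$ a.s. This monotone approximating sequence is exactly what drives the conditional-expectation interchange. Writing $W^{*}:=\essinf_{X\in\mathscr{X}}\mathbb{E}[X\,|\,\mathscr{G}]$ (which exists by the first part applied to the nonnegative integrable family $\{\mathbb{E}[X\,|\,\mathscr{G}]\}$), the inequality $\mathbb{E}[X^{*}\,|\,\mathscr{G}]\leq W^{*}$ is clear since $X\geq X^{*}$ yields $\mathbb{E}[X\,|\,\mathscr{G}]\geq\mathbb{E}[X^{*}\,|\,\mathscr{G}]$ for every $X$. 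For the reverse inequality I would apply conditional monotone convergence to $Z_{n}\downarrow X^{*}$ (dominated by the integrable $Z_{1}$) to obtain $\mathbb{E}[Z_{n}\,|\,\mathscr{G}]\downarrow\mathbb{E}[X^{*}\,|\,\mathscr{G}]$; since each $Z_{n}\in\mathscr{X}$ gives $\mathbb{E}[Z_{n}\,|\,\mathscr{G}]\geq W^{*}$, passing to the limit yields $\mathbb{E}[X^{*}\,|\,\mathscr{G}]\geq W^{*}$, and the two bounds combine to the claimed identity.

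The main obstacle, and the one genuinely nontrivial idea, is the measurability issue in the existence step: one cannot directly manipulate $\inf_{X\in\mathscr{X}}X$ over the possibly uncountable index set. The device of minimizing $\mathbb{E}[\inf_{\mathscr{C}}(\cdot)]$ over countable subfamilies and then closing under a countable union is what produces a genuinely $\mathcal{F}$-measurable candidate $X^{*}$ realizing the infimum; everything else (the two defining properties, uniqueness, the monotone sequence, and the interchange with $\mathbb{E}[\cdot\,|\,\mathscr{G}]$) is a routine consequence. I would also stress that the pairwise-minimum hypothesis is used precisely to produce the monotone approximating sequence, and hence is what makes the conditional-expectation interchange go through.
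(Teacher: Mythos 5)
Your proof is correct and complete. Note that the paper itself does not prove this lemma at all---it is recalled as a classical result with a citation to Karatzas and Shreve [Appendix A]---and your argument is precisely the standard one found there: pass to countable subfamilies to circumvent the measurability of an uncountable infimum, minimize the expectation over such subfamilies, take a countable union of a minimizing sequence to get the candidate $X^{*}$, and then verify the two defining properties; the pairwise-minimum hypothesis is used, exactly as you say, to produce the monotone approximating sequence that makes the conditional-expectation interchange work (and without which the interchange is genuinely false). One small point: the paper's statement says ``nondecreasing sequence,'' but for an essential \emph{infimum} under closure by pairwise minimum the correct (and intended) conclusion is a nonincreasing sequence $Z_{n}\searrow X^{*}$, exactly as you construct it; this matches how the lemma is actually invoked later in the paper (the minimizing sequence with $J(u_{k}(\cdot);\tau,\xi)\searrow\mathscr{V}(\tau,\xi)$), so the paper's wording is a typo and your version is the right one.
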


\subsection{Some Basic Definition and Results on $\mathscr{T}$-System}

For any $\tau_{1},\tau_{2}\in\mathscr{T},$ with $\tau_{1}\leq\tau_{2}$
almost surely and $\mathbb{P}(\tau_{1}<\tau_{2})>0,$ let 
\[
\mathscr{T}[\tau_{1},\tau_{2}]:=\{\tau\in\mathscr{T}|\tau_{1}\le\tau\le\tau_{2}~\mathbb{P}{\rm -a.s.}\}.
\]
 The following classical result of aggregation of supmartingale system
could be found in \cite{ElKaroui1981Les}.

\begin{defn} \label{defn:T-system}A family of random variables $\mathscr{K}:=\{\mathscr{K}(\tau),\tau\in\mathscr{T}\}$
indexed by $\mathscr{T}$ is said to be $\mathscr{T}$-system if it
satisfies\\
 1. for all $\tau\in\mathscr{T}$, $\mathscr{K}(\tau)$ is $\mathscr{F}_{\tau}$-measurable
random variable;\\
 2. for all $\tau_{1},\tau_{2}\in\mathscr{T},$ $\mathscr{K}(\tau_{1})=\mathscr{K}(\tau_{2})$
a.s. on $\{\tau_{1}=\tau_{2}\}~{\rm for}~\tau_{1},\tau_{2}\in\mathscr{T}.$
\end{defn}

\begin{defn} We call a $\mathscr{T}$-system $\{\mathscr{K}(\tau),\tau\in\mathscr{T}\}$
a submartingale system if the following two properties hold: \\
(i) $\mathscr{K}(\tau)$ is integrable for any $\tau\in\mathscr{T};$
\\
(ii) $\mathbb{E}^{\mathscr{F}_{\tau_{1}}}[\mathscr{K}(\tau_{2})]\geq\mathscr{K}(\tau_{1}),$
$\mathbb{P}$-a.s., for all $\tau_{1}\in\mathscr{T},\tau_{2}\in\mathscr{T}_{\tau_{1}}.$
\\
We call $\mathscr{T}$-system $\mathscr{K}:=\{\mathscr{K}(\tau),\tau\in\mathscr{T}\}$
is said to be a supermartingale system if $-\mathscr{K}$ is a submartingale
system, and call it a martingale if it is both a $\mathscr{T}$-supermartingale
and a $\mathscr{T}$-submartingale system. \end{defn}

\begin{defn}\label{defn: submart} A $\mathscr{T}$-system $\{\mathscr{K}(\tau),\tau\in\mathscr{T}\}$
is called right-(resp., left-) continuous along times in expectation
(RCE (resp., LCE)) if for any sequences of stopping times $(\tau_{n})_{n\in\mathbb{N}}$
such that $\tau_{n}\searrow\tau$ a.s.(resp., $\tau_{n}\nearrow\tau$
), one has $\mathbb{E}[\mathscr{K}(\tau)]=\lim_{n\longrightarrow\infty}\mathbb{E}[\mathscr{K}(\tau_{n})].$
\end{defn}

\begin{defn}\label{defn: rce} We call that an process $X=\{X(t),t\in[0,T]\}$
aggregates the $\mathscr{T}$-system $\{\mathscr{K}(\tau),\tau\in\mathscr{T}\},$
if for any $\tau\in\mathscr{T},$ it holds $X(\tau)=\mathscr{K}(\tau),$
$\mathbb{P}$-a.s. \end{defn}

The following result could be found in \cite[subsection 2.14 on p.112]{ElKaroui1981Les},
or adapted from \cite[Theorem 3.13 in Chapter 1]{KaratzsShreve1991Brownian}. 

\begin{prop}\label{prop: aggregation} Let a $\mathscr{T}$-system
$\{\mathscr{K}(\tau),\tau\in\mathscr{T}\}$ be a supermartingale system
which is RCE and such that $\mathscr{K}(0)<\infty$. There then exists
a RCLL adapted process denoted by $\{K(t)\}_{t\in[0,T]}$ which aggregates
$\mathscr{T}$-system $\{\mathscr{K}(\tau),\tau\in\mathscr{T}\}.$
\end{prop}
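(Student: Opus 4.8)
The plan is to run the classical Doob regularization argument, passing first to deterministic rational times, regularizing, and then extending the aggregating process back to all of $\mathscr{T}$. First I would note that applying the supermartingale-system inequality with $\tau_1=0$ gives $\mathbb{E}[\mathscr{K}(\tau)]\le\mathscr{K}(0)<\infty$ for every $\tau\in\mathscr{T}$; together with the nonnegativity of the system inherited from the underlying value function, this yields the uniform bound $\sup_{\tau\in\mathscr{T}}\mathbb{E}|\mathscr{K}(\tau)|<\infty$. Restricting the index to $t\in\mathbb{Q}\cap[0,T]$, the family $\{\mathscr{K}(t)\}$ is then an $L^1$-bounded supermartingale in the ordinary sense.

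Next I would regularize along the rationals. Doob's upcrossing inequality, applied to the finite subfamilies of $\{\mathscr{K}(t)\}_{t\in\mathbb{Q}\cap[0,T]}$ and combined with the uniform $L^1$ bound, shows that outside a single $\mathbb{P}$-null set the one-sided limits $\lim_{s\downarrow t,\,s\in\mathbb{Q}}\mathscr{K}(s)$ and $\lim_{s\uparrow t,\,s\in\mathbb{Q}}\mathscr{K}(s)$ exist and are finite for every $t$. Setting $K(t):=\lim_{s\downarrow t,\,s\in\mathbb{Q}}\mathscr{K}(s)$ produces, by the standard argument, an $\mathscr{F}$-adapted process with RCLL paths, adaptedness relying on the right-continuity of $\mathscr{F}$.

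I would then identify $K$ with the system at deterministic times. Fix $t$ and rationals $s\downarrow t$. The sampled sequence $\{\mathscr{K}(s)\}$ is a reverse supermartingale along the decreasing filtration $\mathscr{F}_s$, so the uniform $L^1$ bound makes it uniformly integrable and $L^1$-convergent to $K(t)$. Passing to the limit in $\mathbb{E}^{\mathscr{F}_t}[\mathscr{K}(s)]\le\mathscr{K}(t)$ and using $\mathbb{E}^{\mathscr{F}_t}[K(t)]=K(t)$ (right-continuity of the filtration) gives $K(t)\le\mathscr{K}(t)$ a.s. The reverse inequality is then purely a matter of expectations: $\mathbb{E}[K(t)]=\lim_{s\downarrow t}\mathbb{E}[\mathscr{K}(s)]=\mathbb{E}[\mathscr{K}(t)]$ by the RCE hypothesis, and an a.s. inequality between integrable variables with equal expectations forces $K(t)=\mathscr{K}(t)$ a.s. This is precisely where RCE is indispensable, and I expect this closing step to be the conceptual heart of the proof.

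Finally I would extend the identity to all of $\mathscr{T}$ by dyadic approximation. For $\tau\in\mathscr{T}$ set $\tau_n:=(\lceil 2^n\tau\rceil 2^{-n})\wedge T$, so that $\tau_n\downarrow\tau$ and each $\tau_n$ takes values in a countable set of dyadic times. On each event $\{\tau_n=k2^{-n}\}$, the $\mathscr{T}$-system consistency (property 2 of Definition \ref{defn:T-system}) together with the deterministic-time identity gives $K(\tau_n)=K(k2^{-n})=\mathscr{K}(k2^{-n})=\mathscr{K}(\tau_n)$, hence $K(\tau_n)=\mathscr{K}(\tau_n)$ a.s. Letting $n\to\infty$, right-continuity of $K$ gives $K(\tau_n)\to K(\tau)$; meanwhile $\{\mathscr{K}(\tau_n)\}$ is again a reverse supermartingale along the decreasing filtration $\mathscr{F}_{\tau_n}$, hence uniformly integrable, and RCE identifies the limit of $\mathbb{E}[\mathscr{K}(\tau_n)]$ with $\mathbb{E}[\mathscr{K}(\tau)]$. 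The same two-step argument as above (conditional Fatou for the a.s. inequality $K(\tau)\le\mathscr{K}(\tau)$, equal expectations from RCE for the reverse) yields $K(\tau)=\mathscr{K}(\tau)$ a.s., so $K$ aggregates the system. The principal technical obstacles are securing the uniform integrability that converts RCE's expectation-level statements into almost-sure identities, which I handle through the reverse-supermartingale structure of the sampled sequences, and arranging the regularization so that a single null set serves all $t$, which is handled by first working on the countable rational grid.
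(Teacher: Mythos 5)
Your proof is correct and follows essentially the same route as the paper: regularize the deterministic-time supermartingale along the rationals to obtain the RCLL modification (which the paper gets by citing Karatzas--Shreve, Theorem 3.13, and you re-derive via upcrossings and backward-supermartingale convergence), then extend to all of $\mathscr{T}$ by dyadic approximation of stopping times, using the $\mathscr{T}$-system consistency at countably-valued stopping times and the RCE-plus-uniform-integrability argument to identify $K(\tau)$ with $\mathscr{K}(\tau)$. The only difference is cosmetic: you spell out the steps that the paper delegates to its references.
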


\begin{proof}

Consider a supermartingale process $(\mathscr{K}(t))_{0\leq t\leq T}$,
by Theorem 3.13 in \cite[Chapter 1]{KaratzsShreve1991Brownian}, it
has a RCLL modification $K(t):=\lim_{s\searrow t,s\in\mathbb{Q}}\mathscr{K}(s)$.
For any stopping time $\tau$, define $\tau_{n}(\omega):=\frac{i}{2^{n}}$,
if $\tau(\omega)\in(\frac{i-1}{2^{n}},\frac{i}{2^{n}}]$ for some
integer $i>0$. It is easy to see that $\mathscr{K}(\tau_{n})=K(\tau_{n})$.
Then by REC of $\mathscr{K}$ and uniform convergence of $\{K(\tau_{n})\}$
(see Remark 3.12 in \cite[Chapter 1]{KaratzsShreve1991Brownian}),
passing $n$ to infinity, we have $\mathscr{K}(\tau)=K(\tau)$ a.e.
Thus $\{K(t)\}_{t\in[0,T]}$ aggregates $\mathscr{T}$-system $\{\mathscr{K}(\tau),\tau\in\mathscr{T}\}$. 

\end{proof}

For future purposes, we shall consider the \textquotedbl conditional\textquotedbl{}
extension of $\mathscr{T}$-system. More precisely, for a family of
random variables ${\mathscr{K}}:=\{\mathscr{K}(\sigma),\sigma\in\mathscr{T}_{\tau}\}$
indexed by $\mathscr{T}_{\tau}$, it is called a $\mathscr{T}_{\tau}$-system
if it satisfies\\
 1. for all $\sigma\in\mathscr{T}_{\tau}$, $\mathscr{K}(\sigma)$
is $\mathscr{F}_{\sigma}$-measurable random variable.\\
 2. for all $\sigma_{1},\sigma_{2}\in\mathscr{T}_{\tau},$ $\mathscr{K}(\sigma_{1})=\mathscr{K}(\sigma_{2})$
a.s.~ on $\{\sigma_{1}=\sigma_{2}\}~{\rm for}~\sigma_{1},\sigma_{2}\in\mathscr{T}_{\tau}.$\\
 Naturally, Definitions \ref{defn: submart} and \ref{defn: rce}
can be adapted for the $\mathscr{T}_{\tau}$-system. Given a $\mathscr{T}_{\tau}$-system
$\mathscr{K}$, one can extend it to be a $\mathscr{T}$-system, still
denoted by ${\mathscr{K}}$, in the following way: 
\[
\mathscr{K}(\sigma):=\mathscr{K}(\sigma)\chi_{\{\sigma\ge\tau\}}+\mathbb{E}[\mathscr{K}(\tau)\chi_{\{\sigma<\tau\}}|\mathscr{F}_{\sigma}]\chi_{\{\sigma<\tau\}}.
\]
If the original $\mathscr{T}_{\tau}$-system ${\mathscr{K}}$ is a
submartingale (resp. supermartingale) system, then the extension is
also a submartingale (resp. supermartingale) system. Moreover, the
RCE (or LCE) property holds for the extension. Hence, according to
Proposition \ref{prop: aggregation}, if ${\mathscr{K}}$ is a supermartingale
$\mathscr{T}_{\tau}$-system which is RCE and $E[\mathscr{K}(\tau)]<+\infty$,
then there exists a RCLL adapted process $K$ defined on the random
interval $[\![\tau,T]\!]$ which aggregates $\mathscr{K}$, i.e.,
for any $\sigma\in\mathscr{T}_{\tau}$, 
\[
K(\sigma)=\mathscr{K}(\sigma),\mathbb{P}-a.s..
\]

\subsection{Preliminary Results for Liner SDE with Jumps}

Let $p\geq2.$ For any $(\tau,\xi)\in\mathscr{T}\times L^{p}(\Omega,{\mathscr{F}_{\tau}},\mathbb{P};\mathbb{R}^{n}),$
consider the following linear SDE with jumps

\begin{equation}
\left\{ \begin{array}{lll}
dX(t) & = & [A(t)X(t-)+f(t)]dt+{\displaystyle \sum_{i=1}^{d}[C^{i}(t)X(t-)+g^{i}(t)]dW^{i}(t)}\\
 &  & +{\displaystyle \int_{\Lambda}[E(t,e)X({t-})+h(t,e)]\tilde{\mu}(de,dt),\tau\leq t\leq T,}\\
x(\tau) & = & \xi,
\end{array}\right.\label{eq:linear-SDE}
\end{equation}
where the coefficients satisfy the following basic assumption: \begin{ass}\label{ass:1.1}
The matrix-valued processes $A:[0,T]\times\Omega\rightarrow\mathbb{R}^{n\times n},B:[0,T]\times\Omega\rightarrow\mathbb{R}^{n\times m};C^{i}:[0,T]\times\Omega\rightarrow\mathbb{R}^{n\times n},i=1,2,\cdots,d$
are uniformly bounded and $\mathscr{F}$-predictable. The matrix process
$E:[0,T]\times\Omega\times\Lambda\rightarrow\mathbb{R}^{n\times n}$
is uniformly bounded and $\mathscr{\tilde{P}}$-measurable. The stochastic
processes $f(\cdot),g^{i}(\cdot)$ belong to ${\cal M}_{\mathscr{F}}^{2,p}(0,T;\mathbb{R}^{n})$
and $h(\cdot,\cdot)$ belongs to ${\cal M}_{\mathscr{F}}^{\nu,p}(0,T;\mathbb{R}^{n}).$
\end{ass}

The following classical estimate could be found in lots of literature
(see \cite{protter2005stochastic,LiPeng2009stochastic}), the proof
based on the It\^o formula, Gronwall's inequality and BDG inequality
is standard.

\begin{lem} \label{lem:3.1} Let Assumptions \ref{ass:1.1} be satisfied.
Then the SDE \eqref{eq:linear-SDE} has a unique strong solution $X(\cdot)\in{\cal S}_{{\cal F}}^{p}(\tau,T;\mathbb{R}^{n})$
and there is a constant $C_{p}>0$ such that for any stopping time
$\tau<T$,{\footnotesize{}
\begin{equation}
\begin{split}\mathbb{E}^{\mathscr{F}_{\tau}}\bigg[\sup_{\tau\leq t\leq T}|X(t)|^{p}\bigg]\leq C_{p}\mathbb{E}^{\mathscr{F}_{\tau}}\bigg[|\xi|^{p}+\bigg(\int_{\tau}^{T}|f(t)|^{2}dt\bigg)^{\frac{p}{2}}+\bigg(\int_{\tau}^{T}\sum_{i=1}^{d}|g^{i}(t)|^{2}dt\bigg)^{\frac{p}{2}}+\int_{\tau}^{T}\int_{\Lambda}|h(t,e)|^{p}\nu(de)dt\bigg].\end{split}
\label{eq:2.4}
\end{equation}
}\end{lem}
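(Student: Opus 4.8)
The plan is to establish the three claims separately: existence and uniqueness by the classical contraction argument, and the bound \eqref{eq:2.4} by the It\^o--BDG--Gronwall scheme alluded to in the statement. For existence and uniqueness, note that under Assumption \ref{ass:1.1} the data maps $x\mapsto A(t)x+f(t)$, $x\mapsto C^i(t)x+g^i(t)$ and $(x,e)\mapsto E(t,e)x+h(t,e)$ are affine in $x$; the uniform boundedness of $A,C^i,E$ gives a global Lipschitz constant independent of $(t,\omega)$, while $f,g^i\in\mathcal{M}^{2,p}_{\mathscr{F}}$ and $h\in\mathcal{M}^{\nu,p}_{\mathscr{F}}$ control the inhomogeneous parts. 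A Picard iteration in $\mathcal{S}^p_{\mathscr{F}}(\tau,T;\R^n)$ (or a fixed-point argument on a short interval, patched across $[\tau,T]$) then produces a unique strong solution, as in \cite{protter2005stochastic,LiPeng2009stochastic}. The real work is \eqref{eq:2.4}.

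For the estimate I would work from the integral form of \eqref{eq:linear-SDE} directly, rather than applying It\^o to $|X|^p$, because this reproduces the asymmetric norms of \eqref{eq:2.4} most transparently. Decomposing $X(t)-\xi$ into the drift integral, the $d$ Brownian integrals and the compensated Poisson integral and using $|\sum_j a_j|^p\le C_p\sum_j|a_j|^p$, I estimate $\mathbb{E}^{\mathscr{F}_\tau}\big[\sup_{\tau\le t\le T}|X(t)|^p\big]$ term by term. The drift is handled by H\"older, $\big(\int_\tau^T|AX+f|\,ds\big)^p\le C\int_\tau^T|X|^p\,ds+C\big(\int_\tau^T|f|^2\,ds\big)^{p/2}$, producing the $f$-contribution; the Brownian integrals are controlled by the conditional Burkholder--Davis--Gundy inequality, giving $C\,\mathbb{E}^{\mathscr{F}_\tau}\big(\int_\tau^T|C^iX+g^i|^2\,ds\big)^{p/2}$, which splits (via the power-mean inequality, $p\ge2$) into a term bounded by $C\int_\tau^T|X|^p\,ds$ plus the $g$-contribution of \eqref{eq:2.4}.

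The jump-specific step is the compensated Poisson integral, for which I would use the Kunita/BDG inequality \cite{Kunita2004stochastic}:
\[
\mathbb{E}^{\mathscr{F}_\tau}\Big[\sup_{\tau\le t\le T}\Big|\int_\tau^t\!\int_\Lambda (EX+h)\,\tilde\mu(de,ds)\Big|^p\Big]\le C_p\,\mathbb{E}^{\mathscr{F}_\tau}\Big[\Big(\int_\tau^T\!\int_\Lambda|EX+h|^2\,\nu(de)ds\Big)^{\frac p2}+\int_\tau^T\!\int_\Lambda|EX+h|^p\,\nu(de)ds\Big].
\]
Two points are decisive here. Since $E$ is bounded and $\nu$ finite, the $EX$ parts of both terms are absorbed into $C\int_\tau^T|X|^p\,ds$. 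More delicate is reconciling the quadratic term $\big(\int\!\int|h|^2\nu\,ds\big)^{p/2}$ with the $\mathcal{M}^{\nu,p}_{\mathscr{F}}$-norm appearing in \eqref{eq:2.4}: applying Jensen's inequality to the finite measure $\nu(de)\,ds$ on $[\tau,T]\times\Lambda$ and the convex map $x\mapsto x^{p/2}$ gives $\big(\int\!\int|h|^2\nu\,ds\big)^{p/2}\le[(T-\tau)\nu(\Lambda)]^{\frac p2-1}\int\!\int|h|^p\nu\,ds$, so it too is dominated by the $h$-contribution. I expect this reconciliation, together with securing the correct form of Kunita's inequality for $p\ge2$, to be the main obstacle, as it is the only genuine departure from the Brownian case.

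Collecting the estimates and writing $\Phi(t):=\mathbb{E}^{\mathscr{F}_\tau}\big[\sup_{\tau\le s\le t}|X(s)|^p\big]$, all the data terms assemble into the ($\mathscr{F}_\tau$-measurable) right-hand side of \eqref{eq:2.4}, call it $a_\tau$, while every $X$-term is dominated by $C\int_\tau^t\Phi(s)\,ds$, yielding $\Phi(t)\le a_\tau+C\int_\tau^t\Phi(s)\,ds$. A routine localization, stopping at $\tau_n:=\inf\{t\ge\tau:|X(t)|\ge n\}$ and letting $n\to\infty$ by Fatou, guarantees $\Phi$ is finite so that Gronwall's inequality applies $\omega$-wise, closing the bound \eqref{eq:2.4} with $C_p$ absorbing $e^{C(T-\tau)}$ and the finite horizon $T$.
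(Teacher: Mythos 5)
Your proposal is correct, but it takes a (mildly) different route from the paper, which in fact gives no proof at all: it simply cites \cite{protter2005stochastic,LiPeng2009stochastic} and remarks that the standard argument goes through It\^o's formula, Gronwall's inequality and the BDG inequality. You deliberately bypass the It\^o expansion of $|X|^p$ (which in the presence of jumps carries awkward second-order and jump-correction terms) and instead estimate $\sup_t|X(t)|^p$ directly from the integral form of \eqref{eq:linear-SDE}: H\"older for the drift, conditional BDG for the Brownian integrals, and Kunita's two-term inequality for the compensated Poisson integral. What your route buys is transparency about the shape of the right-hand side of \eqref{eq:2.4}: the $L^2$-type norms of $f,g^i$ arise from H\"older/BDG, while the $\mathcal{M}^{\nu,p}$-type norm of $h$ absorbs the quadratic term of Kunita's inequality through the Jensen bound $\big(\int_\tau^T\int_\Lambda|h|^2\nu(de)ds\big)^{p/2}\le[(T-\tau)\nu(\Lambda)]^{\frac{p}{2}-1}\int_\tau^T\int_\Lambda|h|^p\nu(de)ds$. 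You correctly identify this as the jump-specific crux; note that it is exactly the point where the paper's standing assumption that $\nu$ is a \emph{finite} measure is indispensable --- for an infinite L\'evy measure the $p$-norm of $h$ alone cannot dominate the quadratic term, and \eqref{eq:2.4} as stated would fail. Your closing steps (absorbing the $EX$ terms via boundedness of $E$ and $\nu(\Lambda)<\infty$, conditional Fubini, localization at $\tau_n$ plus Fatou so that Gronwall applies $\omega$-wise to the $\mathscr{F}_\tau$-conditional quantity) are the right way to finish, and the Picard iteration for existence and uniqueness is standard; the It\^o-formula route of the cited references yields the same bound and is merely the more traditional presentation.
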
 

\subsection{Formulation on SLQ Problem}

In this section, we formulate the SLQ problem with jumps. We first
give the following definition of admissible control.

\begin{defn} Let $\tau\in\mathscr{T}.$ An $\mathscr{F}$-predictable
process $u(\cdot)$ is said to be an admissible control on the random
interval $[\![\tau,T]\!],$ if $u(\cdot)\in{\cal M}_{\mathscr{F}}^{2}(\tau,T;\mathbb{R}^{m}).$
The set of all admissible control is denoted by $\mathscr{U}_{\tau}$
\end{defn} For any given admissible control $u(\cdot)\in\mathscr{U}_{0}$,
consider the following controlled linear SDE with jumps: 
\begin{equation}
\left\{ \begin{array}{lll}
dX(t) & = & [A(t)X(t-)+B(t)u(t)]dt+{\displaystyle \sum_{i=1}^{d}[C^{i}(t)X(t-)+D^{i}(t)u(t)]dW^{i}(t)}\\
 &  & +{\displaystyle \int_{\Lambda}[E(t,e)X({t-})+F(t,e)u(t)]\tilde{\mu}(de,dt),}\\
X(0) & = & x
\end{array}\right.\label{eq:4.1}
\end{equation}
with the cost functional 
\begin{equation}
\begin{split}J(u(\cdot);0,x):=\mathbb{E}\bigg[\langle MX(T),X(T)\rangle+\int_{0}^{T}\big(\langle Q(t)X(t),X(t)\rangle+\langle N(t)u(t),u(t)\rangle\big)dt\bigg].\end{split}
\label{eq:2.2}
\end{equation}
Here $A,B,C,D,E,F,Q,N$ and $M$ are given random mappings such that
$A:[0,T]\times\Omega\rightarrow\mathbb{R}^{n\times n};B:[0,T]\times\Omega\rightarrow\mathbb{R}^{n\times m};C^{i}:[0,T]\times\Omega\rightarrow\mathbb{R}^{n\times n},D^{i}:[0,T]\times\Omega\rightarrow\mathbb{R}^{n\times m},i=1,2,\cdots,d;E:[0,T]\times\Omega\times\Lambda\rightarrow\mathbb{R}^{n\times n};F:[0,T]\times\Omega\times\Lambda\rightarrow\mathbb{R}^{n\times m};Q:[0,T]\times\Omega\rightarrow\mathbb{R}^{n\times n},N:[0,T]\times\Omega\rightarrow\mathbb{R}^{m\times m};M:\Omega\rightarrow\mathbb{R}^{n\times n}$
satisfying Assumption \ref{ass:4.1}.

By Lemma \ref{lem:3.1}, for any $u(\cdot)\in{\mathscr{U}}_{0},$
it follows that the SDE \eqref{eq:4.1} admits a unique strong solution
in the space ${\cal S}_{\mathscr{F}}^{2}(0,T;\mathbb{R}^{n})$, denoted
by $X^{0,x;u(\cdot)}(\cdot)$. We call $X(\cdot)\triangleq X^{0,x;u(\cdot)}(\cdot)$
the state process corresponding to the control process $u(\cdot)$
and call $(u(\cdot);X(\cdot))$ the admissible pair. Furthermore,
Assumption \ref{ass:4.1} and the a priori estimate \eqref{eq:2.4}
imply that 
\[
|J(u(\cdot);0,x)|<\infty.
\]

Then our SLQ problem can be stated as follows.

\begin{pro}\label{pro:4.1} Find an admissible control process ${\bar{u}}(\cdot)\in{\mathscr{U}}_{0}$
such that 
\begin{equation}
J({\bar{u}}(\cdot);0,x)=\inf_{u(\cdot)\in{\mathscr{U}}_{0}}J(u(\cdot);0,x).\label{eq:b8}
\end{equation}
\end{pro} 

The admissible control ${\bar{u}}(\cdot)$ satisfying \eqref{eq:b8}
is called an optimal control process of Problem \ref{pro:4.1}. Correspondingly,
the state process ${\bar{X}}(\cdot)$ associated with ${\bar{u}}(\cdot)$
is called an optimal state process and $({\bar{u}}(\cdot);{\bar{X}}(\cdot))$
is called an optimal pair of Problem \ref{pro:4.1}.

\section{\label{sec:DPP-semimartingle}Dynamical Programming Principle and
the Semimartingale Property of the Value Process}

\subsection{Initial-Data-Parameterized SLQ Problem}

This subsection is devoted to introducing the initial-data-parameterized
SLQ Problem. For simplicity, we define the random function 
\[
f(t,x,u):=\langle Q(t)x,x\rangle+\langle N(t)u,u\rangle,\quad\forall(t,x,u)\in[0,T]\times\mathbb{R}^{n}\times\mathbb{R}^{m}.
\]
Fixed initial data $(\tau,\xi)\in\mathscr{T}\times L^{2}(\Omega,{\mathscr{F}_{\tau}},\mathbb{P};\mathbb{R}^{n}),$
for any given admissible control $u(\cdot)\in\mathscr{U}_{\tau},$
denote by $X^{\tau,\xi;u}$ the solution of following state equation

\begin{equation}
\left\{ \begin{array}{lll}
dX(t) & = & [A(t)X(t-)+B(t)u(t)]dt+{\displaystyle \sum_{i=1}^{d}[C^{i}(t)X(t-)+D^{i}(t)u(t)]dW^{i}(t)}\\
 &  & +{\displaystyle \int_{\Lambda}[E(t,e)X({t-})+F(t,e)u(t)]\tilde{\mu}(de,dt),}\\
X(\tau) & = & \xi.
\end{array}\right.\label{eq:3.2}
\end{equation}
The cost functional is defined as the following conditional expectation:
\begin{equation}
J(u(\cdot);\tau,\xi):=\mathbb{E}^{\mathscr{F}_{\tau}}{\displaystyle \bigg[{\displaystyle \int_{\tau}^{T}f(s,X^{\tau,\xi;u(\cdot)}(s),u(s))ds+\langle MX^{\tau,\xi;u(\cdot)}(T),X^{\tau,\xi;u(\cdot)}(T)\rangle\bigg].}}\label{eq:3.3}
\end{equation}
Then the corresponding initial-data-parameterized SLQ Problem is stated
as follows :

\begin{pro}\label{pro:3.1} Find an admissible control process ${\bar{u}}(\cdot)\in\mathscr{U}_{\tau}$
such that 
\begin{equation}
J({\bar{u}}(\cdot);\tau,\xi)=\essinf_{u(\cdot)\in\mathscr{U}_{\tau}}J(u(\cdot);\tau,\xi).\label{eq:ocp}
\end{equation}
\end{pro} We also denote the above optimal control problem by Problem
$\mathscr{P}_{\tau,\xi}$ to stress the dependence on the parameter
$(\tau,\xi).$ Clearly, for any initial data $(\tau,\xi)\in\mathscr{T}\times L^{2}(\Omega,{\mathscr{F}_{\tau}},\mathbb{P};\mathbb{R}^{n})$
and admissible control $u(\cdot)\in\mathscr{U}_{\tau},$ the state
equation \eqref{eq:3.2} has a unique strong solution $X(\cdot)\equiv X^{\tau,\xi;u(\cdot)}$
and \eqref{eq:ocp} is well-defined. Furthermore, we can define the
following conditional minimal value system

\begin{equation}
\mathscr{V}(\tau,\xi):=\essinf_{u(\cdot)\in\mathscr{U}_{\tau}}J(u(\cdot);\tau,\xi).\label{eq:3.4}
\end{equation}
It is obvious that $\mathscr{V}(\tau,\xi)$ is $\mathscr{F}_{\tau}$-measurable
random variable for any $(\tau,\xi)\in\mathscr{T}\times L^{2}(\Omega,{\mathscr{F}_{\tau}},\mathbb{P};\mathbb{R}^{n})$.
The random variable $\mathscr{V}(\tau,\xi)$ will play an important
role in the dynamic programming principle method to obtain the existence
of the solution of the BSREJ \eqref{eq:Riccati}. 

The following two results Proposition \ref{prop:quadr} and Theorem
\ref{thm: dpp} are needed in our approach. The description and their
proofs are more or less standard in the context of SLQ problem. We
just give a sketch of the proof in the case of jumps since it is similar
to that in the case of Brownian motion. We suggest the reader to visit
Sections 2 and 3 in \cite{tang2015dynamic} for full details. 

\begin{prop} \label{prop:quadr} Let Assumption \ref{ass:4.1} hold. 

(i) There is a positive constant $\lambda$ such that for any $(\tau,\xi)\in\mathscr{T}\times L^{2}(\Omega,{\mathscr{F}_{\tau}},\mathbb{P};\mathbb{R}^{n})$,
it has 
\begin{equation}
0\leq\mathscr{V}(\tau,\xi)\leq J(0;\tau,\xi)\leq\lambda|\xi|^{2}.
\end{equation}

(ii) For any given initial data $(\tau,\xi)\in\mathscr{T}\times L^{2}(\Omega,{\mathscr{F}_{\tau}},\mathbb{P};\mathbb{R}^{n}),$
Problem $\mathscr{P}_{\tau,\xi}$ has a unique optimal control $\bar{u}(\cdot)\in\mathscr{U}_{\tau}$,
i.e. 
\[
\mathscr{V}(\tau,\xi)=J(\bar{u}(\cdot);\tau,\xi),\,\ensuremath{\mathbb{P}}\text{-a.s.}
\]

(iii) The value functional $\mathscr{V}(\tau,\xi)$ is quadratic with
respect to $\xi.$ Moreover, there is an $\mathbb{S}_{+}^{n}$-valued
family $\mathscr{K}:=\{\mathscr{K}(\tau),\tau\in{\mathscr{T}}\}$
such that $\mathscr{K}(\tau)$ is essentially bounded for any $\tau\in\mathscr{T}$
and $\xi\in L^{2}(\Omega,\mathscr{F}_{\tau},\mathbb{P};\mathbb{R}^{n})$
\begin{equation}
\mathscr{\text{\ensuremath{\mathbb{V}}}}(\tau,\xi)=\langle\mathscr{K}(\tau)\xi,\xi\rangle.\label{eq:V-K-relat}
\end{equation}

(iv) For each $x\in\mathbb{R}^{n},$ define the family 
\[
\mathscr{V}_{x}:=\{\mathscr{V}(\tau,x),\tau\in\mathscr{T}\}.
\]
 Then it is a $\mathscr{T}$-system. Moreover, the family $\mathscr{K}=\{\mathscr{K}(\tau),\tau\in{\mathscr{T}}\}$
is also a $\mathscr{T}$-system.

\end{prop}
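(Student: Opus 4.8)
The plan is to treat the four claims in turn, disposing of (i) and (ii) by routine estimates and convex analysis and spending the real effort on the localization arguments underlying (iii) and (iv). For (i), the lower bound $0 \le \mathscr{V}(\tau,\xi)$ is immediate from Assumption \ref{ass:4.1}: since $Q,N\in\mathbb{S}_+$ and $M\in\mathbb{S}_+^n$, the integrand $f(s,X,u)\ge 0$ and $\langle MX(T),X(T)\rangle\ge 0$, so $J(u;\tau,\xi)\ge 0$ for every admissible $u$ and hence so is the essential infimum. The middle inequality is the defining property of the essinf applied to $u\equiv 0$. For the upper bound I would take $u\equiv 0$ in \eqref{eq:3.2}, so $X^{\tau,\xi;0}$ solves the homogeneous equation; the a priori estimate \eqref{eq:2.4} of Lemma \ref{lem:3.1} with $f=g^i=h=0$ gives $\mathbb{E}^{\mathscr{F}_\tau}[\sup_{\tau\le t\le T}|X^{\tau,\xi;0}(t)|^2]\le C|\xi|^2$, and the uniform boundedness of $Q$ and $M$ then yields $J(0;\tau,\xi)\le\lambda|\xi|^2$ for a suitable $\lambda$.

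For (ii), I would use that $u\mapsto X^{\tau,0;u}$ is linear and that $N\ge\delta I$ by Assumption \ref{ass:4.1}. Writing $X^{\tau,\xi;u}=X^{\tau,\xi;0}+X^{\tau,0;u}$, the map $u\mapsto J(u;\tau,\xi)$ is a quadratic functional on the Hilbert space $\mathscr{U}_\tau=\mathcal{M}^2_{\mathscr{F}}(\tau,T;\mathbb{R}^m)$ whose purely quadratic part dominates $\delta\,\mathbb{E}^{\mathscr{F}_\tau}[\int_\tau^T|u(s)|^2\,ds]$, because $Q,M\ge 0$ and $N\ge\delta I$. This strong convexity yields the estimate $\|u_1-u_2\|^2\le\tfrac{2}{\delta}(J(u_1)+J(u_2)-2\mathscr{V})$; since the family $\{J(u;\tau,\xi)\}_u$ is directed downward — $\{J(u_1)\le J(u_2)\}\in\mathscr{F}_\tau$, so one may paste $u_1,u_2$ over this set — Lemma \ref{lem:2.1} supplies a minimizing sequence, which the estimate (after taking expectations) shows is Cauchy in $\mathscr{U}_\tau$ and converges strongly to some $\bar u$ with $J(\bar u;\tau,\xi)=\mathscr{V}(\tau,\xi)$; the same estimate forces uniqueness.

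For (iii), the heart of the argument, I would first record the homogeneity $\mathscr{V}(\tau,c\xi)=c^2\mathscr{V}(\tau,\xi)$, which follows from $X^{\tau,c\xi;cu}=cX^{\tau,\xi;u}$ and the bijection $u\mapsto cu$, and a parallelogram identity, which follows because $u\mapsto X^{\tau,\cdot;u}$ is linear and $f,\langle M\cdot,\cdot\rangle$ are quadratic; together these show that for deterministic $x\in\mathbb{R}^n$ the map $x\mapsto\mathscr{V}(\tau,x)$ is a.s. a nonnegative quadratic form. Polarization then defines an $\mathscr{F}_\tau$-measurable symmetric matrix $\mathscr{K}(\tau)$ by $\mathscr{K}(\tau)_{ii}:=\mathscr{V}(\tau,e_i)$ and $\mathscr{K}(\tau)_{ij}:=\tfrac12(\mathscr{V}(\tau,e_i+e_j)-\mathscr{V}(\tau,e_i)-\mathscr{V}(\tau,e_j))$, so $\langle\mathscr{K}(\tau)x,x\rangle=\mathscr{V}(\tau,x)$ for all deterministic $x$. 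The passage to random $\xi$ is the delicate point and rests on the local property that for every $A\in\mathscr{F}_\tau$ one has $\chi_A\mathscr{V}(\tau,\xi)=\chi_A\mathscr{V}(\tau,\chi_A\xi)$; this follows from $X^{\tau,\chi_A\xi;\chi_A u}=\chi_A X^{\tau,\xi;u}$ (uniqueness of the solution of \eqref{eq:3.2} and $A\in\mathscr{F}_\tau$), from $\chi_A\mathbb{E}^{\mathscr{F}_\tau}[\,\cdot\,]=\mathbb{E}^{\mathscr{F}_\tau}[\chi_A\,\cdot\,]$, and from the interchange of the essential infimum with $\mathscr{F}_\tau$-localization. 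Applying it to simple $\xi=\sum_k x_k\chi_{A_k}$ gives $\mathscr{V}(\tau,\xi)=\sum_k\chi_{A_k}\langle\mathscr{K}(\tau)x_k,x_k\rangle=\langle\mathscr{K}(\tau)\xi,\xi\rangle$, and the bound $0\le\mathscr{V}(\tau,\cdot)\le\lambda|\cdot|^2$ from (i) — which gives $0\le\mathscr{K}(\tau)\le\lambda I$, hence $\mathscr{K}(\tau)\in\mathbb{S}_+^n$ essentially bounded, and the subadditivity of $\sqrt{\mathscr{V}(\tau,\cdot)}$ — lets me extend from simple to arbitrary $\xi\in L^2(\Omega,\mathscr{F}_\tau,\mathbb{P};\mathbb{R}^n)$ by density.

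Finally, for (iv) I would note that $\mathscr{V}(\tau,x)$ is $\mathscr{F}_\tau$-measurable by construction, and that a temporal analogue of the local property — $X^{\tau_1,x;u}$ and $X^{\tau_2,x;u}$ agree on $\{\tau_1=\tau_2\}$, together with the localized essinf — yields $\mathscr{V}(\tau_1,x)=\mathscr{V}(\tau_2,x)$ a.s. on $\{\tau_1=\tau_2\}$, so $\mathscr{V}_x$ is a $\mathscr{T}$-system in the sense of Definition \ref{defn:T-system}; since each entry of $\mathscr{K}(\tau)$ is a fixed linear combination of such values, $\mathscr{K}$ inherits the $\mathscr{T}$-system property. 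I expect the main obstacle to be exactly the two local properties in (iii) and (iv): justifying rigorously that the essential infimum commutes with restriction to $\mathscr{F}_\tau$-sets and to the event $\{\tau_1=\tau_2\}$, which is what legitimizes building the $\mathscr{F}_\tau$-measurable random matrix $\mathscr{K}(\tau)$ out of the deterministic-$x$ values, and where the genuinely stochastic content, as opposed to the deterministic convex-analytic part, resides.
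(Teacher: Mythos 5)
Your proposal is correct and follows essentially the same route as the paper: the a priori estimate of Lemma \ref{lem:3.1} with $u\equiv 0$ for (i), pasting over $\mathscr{F}_{\tau}$-sets plus Lemma \ref{lem:2.1} and the parallelogram (strong-convexity) estimate producing a Cauchy minimizing sequence for (ii), homogeneity and the parallelogram identity followed by polarization to define $\mathscr{K}(\tau)$ for (iii), and direct verification of Definition \ref{defn:T-system} for (iv). You in fact supply more detail than the paper at its tersest points --- the localization argument $\chi_{A}\mathscr{V}(\tau,\xi)=\chi_{A}\mathscr{V}(\tau,\chi_{A}\xi)$ extending \eqref{eq:V-K-relat} from deterministic $x$ to random $\xi\in L^{2}(\Omega,\mathscr{F}_{\tau},\mathbb{P};\mathbb{R}^{n})$, and the agreement of values on $\{\tau_{1}=\tau_{2}\}$ --- which the paper leaves implicit, deferring to the Brownian case in \cite{tang2015dynamic}.
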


\begin{proof} 

(i) Noting Assumption \ref{ass:4.1} and \eqref{eq:3.4}, it is sufficient
to show $J(0;\tau,\xi)\leq\lambda|\xi|^{2}.$ In fact, from the a
priori estimate \eqref{eq:2.4}, we get that

\[
\begin{split}J(0;\tau,\xi) & \leq C\mathbb{E}^{\mathscr{F}_{\tau}}{\displaystyle \bigg[{\displaystyle \int_{\tau}^{T}|X^{\tau,\xi;0}(t)|^{2}dt+|X^{\tau,\xi;0}(T)|^{2}\bigg]}}\\
 & \leq C\mathbb{E}^{\mathscr{F}_{\tau}}\bigg[\sup_{\tau\leq t\leq T}|X^{\tau,\xi;0}(t)|^{2}\bigg]\\
 & \leq C|\xi|^{2}.
\end{split}
\]

(ii) Let $(\tau,\xi)\in\mathscr{T}\times L^{2}(\Omega,{\mathscr{F}_{\tau}},\mathbb{P};\mathbb{R}^{n})$.
For any $u_{1}(\cdot),u_{2}(\cdot)\in\mathscr{U}_{\tau}$, define
\[
\hat{u}(\cdot):=u_{1}(\cdot)\chi_{\{J(u_{1}(\cdot);\tau,\xi)\leq J(u_{2}(\cdot);\tau,\xi)\}}+u_{2}(\cdot)\chi_{\{J(u_{1}(\cdot);\tau,\xi)>J(u_{2}(\cdot);\tau,\xi)\}}.
\]
 Then $X^{\tau,\xi;\hat{u}(\cdot)}=X^{\tau,\xi;u_{1}(\cdot)}\chi_{\{J(u_{1}(\cdot);\tau,\xi)\leq J(u_{2}(\cdot);\tau,\xi)\}}+X^{\tau,\xi;u_{2}(\cdot)}\chi_{\{J(u_{1}(\cdot);\tau,\xi)>J(u_{2}(\cdot);\tau,\xi)\}}.$
Hence 
\begin{align*}
J(\hat{u}(\cdot);\tau,\xi)= & J(u_{1}(\cdot);\tau,\xi)\chi_{\{J(u_{1}(\cdot);\tau,\xi)\leq J(u_{2}(\cdot);\tau,\xi)\}}+J(u_{2}(\cdot);\tau,\xi)\chi_{\{J(u_{1}(\cdot);\tau,\xi)>J(u_{2}(\cdot);\tau,\xi)\}}\\
= & \min\{J(u_{1}(\cdot);\tau,\xi),J(u_{2}(\cdot);\tau,\xi)\}.
\end{align*}
That is $\{J(u(\cdot);\tau,\xi):u(\cdot)\in\mathscr{U}_{\tau}\}$
is closed under pairwise minimum. By Lemma \ref{lem:2.1}, there is
a sequence $\{u_{k}(\cdot)\}_{k=1}^{\infty}\subset\mathscr{U}_{\tau}$,
such that 
\begin{equation}
J(u_{k}(\cdot);\tau,\xi)\searrow\mathscr{V}(\tau,\xi),\quad{\rm as}\,k\to\infty.\label{eq:min-seq}
\end{equation}
 By the parallelogram equality,
\begin{align*}
2J\big(\frac{1}{2}(u_{k}(\cdot)-u_{l}(\cdot));\tau,\xi\big)+2\mathscr{V}(\tau,\xi)\leq & 2J\big(\frac{1}{2}(u_{k}(\cdot)-u_{l}(\cdot));\tau,\xi\big)+2J\big(\frac{1}{2}(u_{k}(\cdot)+u_{l}(\cdot));\tau,\xi\big)\\
= & J(u_{k}(\cdot);\tau,\xi)+J(u_{l}(\cdot);\tau,\xi).
\end{align*}
Let $k,l\to\infty$ in the following inequality, 
\[
0\leq2J\big(\frac{1}{2}(u_{k}(\cdot)-u_{l}(\cdot));\tau,\xi\big)\leq J(u_{k}(\cdot),\tau,\xi)+J(u_{l}(\cdot),\tau,\xi)-2\mathscr{V}(\tau,\xi)\to0,
\]
 which means $\{u_{k}(\cdot)\}_{k=1}^{\infty}$ is Cauchy sequence
in ${\cal M}_{\mathscr{F}}^{2}(\tau,T;\mathbb{R}^{m})$. And it is
easy to check that $\bar{u}(\cdot):=\lim_{k\to\infty}u_{k}(\cdot)$
is the unique optimal control for problem $\mathscr{P}_{\tau,\xi}$. 

(iii) One can show that (see \cite{Faurre1968sur} or \cite[Lemma 3.2]{tang2015dynamic}),
for any real number $\eta>0$, $x,y\in\mathbb{R}^{n}$,
\begin{align*}
\mathscr{V}(\tau,\eta x) & =\eta^{2}\mathscr{V}(\tau,x),\\
\mathscr{V}(\tau,x+y)+\mathscr{V}(\tau,x-y) & =2\mathscr{V}(\tau,x)+2\mathscr{V}(\tau,y).
\end{align*}
So $\mathscr{V}(\tau,x)$ is a quadratic form. Let 
\begin{equation}
\begin{split}\mathscr{K}(\tau)=\frac{1}{4}(\mathscr{V}(\tau,e_{i}+e_{j})-\mathscr{V}(\tau,e_{i}-e_{j}))_{{i,j=1}}^{n},\end{split}
\label{eq:3.9}
\end{equation}
 then we have \eqref{eq:V-K-relat}.

(iv) Verifying Definition \ref{defn:T-system} directly, we shall
prove that $\mathscr{V}_{x}$ is $\mathscr{T}$-system and and consequently
so does $\mathscr{K}$. \end{proof}

\subsection{Dynamical Programming Principle and the Semimartingale Property}

The following result is the dynamical programming principle for Problem
$\mathscr{P}_{\tau,\xi}.$

\begin{thm}\label{thm: dpp} Let Assumption \ref{ass:4.1} hold.
(i) For $\tau\in\mathscr{T},\sigma\in\mathscr{T}_{\tau},$ and $\xi\in L^{2}(\Omega,\mathscr{F}_{\tau},\ensuremath{\mathbb{P}};\mathbb{R}^{n}),$
\begin{equation}
\begin{split}\mathscr{V}(\tau,\xi)=\essinf_{u(\cdot)\in{\mathscr{U}}_{\tau}}\mathbb{E}^{\mathscr{F}_{\tau}}\bigg[\int_{\tau}^{\sigma}f(s,X^{\tau,\xi;u(\cdot)}(s),u(s))ds+\mathscr{V}(\sigma,X^{\tau,\xi;u(\cdot)}(\sigma))\bigg].\end{split}
\label{eq:dpp-1}
\end{equation}
And it holds that 
\begin{equation}
\begin{split}\mathscr{V}(\tau,\xi)=\mathbb{E}^{\mathscr{F}_{\tau}}\bigg[\int_{\tau}^{\sigma}f(s,X^{\tau,\xi;\bar{u}(\cdot)}(s),\bar{u}(s))ds+\mathscr{V}(\sigma,X^{\tau,\xi;\bar{u}(\cdot)}(\sigma))\bigg]\end{split}
\label{eq:dpp-2}
\end{equation}
for the optimal control $\bar{u}(\cdot)\in\mathscr{U}_{\tau}$ of
Problem $\mathscr{P}_{\tau,\xi}$.\\
(ii) For any $\tau\in\mathscr{T}$ and $(x,u(\cdot))\in\mathbb{R}^{n}\times\mathscr{U}_{\tau},$
the family $\mathscr{J}^{\tau,x,u(\cdot)}:=\{\mathscr{J}^{\tau,x,u(\cdot)}(\sigma),\sigma\in\mathscr{T}_{\tau}\}$
is a $\mathscr{T}$-submartingale, where 
\begin{equation}
{\mathscr{J}}^{\tau,x,u(\cdot)}(\sigma):=\mathscr{V}(\sigma,X^{\tau,x;u(\cdot)}(\sigma))+\int_{\tau}^{\sigma}f(r,X^{\tau,x;u(\cdot)}(r),u(r))dr,\quad\sigma\in\mathscr{T}_{\tau};\label{eq:dpp-3}
\end{equation}
 And the family $\mathscr{J}^{\tau,x,\bar{u}(\cdot)}$ is a $\mathscr{T}$-martingale
for the optimal control $\bar{u}(\cdot)\in\mathscr{U}_{\tau}$ of
problem $\mathscr{P}_{\tau,x}$. Besides,

\[
{\mathscr{J}}^{\tau,x,u(\cdot)}(\sigma)=\essinf_{v(\cdot)\in\mathscr{U}_{\sigma}^{u(\cdot)}}\mathbb{E}^{\mathscr{F}_{\sigma}}\bigg[\int_{\tau}^{T}f(r,X^{\tau,x;v(\cdot)}(r),v(r))+\langle MX^{\tau,x;v(\cdot)}(T),X^{\tau,x;v(\cdot)}(T)\rangle\bigg],\:u\in\mathscr{U}_{\tau},
\]
 where
\[
\mathscr{U}_{\sigma}^{u(\cdot)}:=\big\{ v(\cdot)\in\mathscr{U}_{\tau}|v(\cdot)=u(\cdot)~{\rm on}~[\![\tau,\sigma]\!]\big\}.
\]
\\
(iii) If $\bar{u}(\cdot)\in{\mathscr{U}}_{\tau}$ such that $\mathscr{J}^{\tau,x,\bar{u}(\cdot)}$
is a $\mathscr{T}$-martingale, then $\bar{u}(\cdot)$ is optimal
for Problem $\mathscr{P}_{\tau,x}.$

\end{thm}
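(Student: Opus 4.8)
The plan is to prove the dynamic programming identity \eqref{eq:dpp-1}--\eqref{eq:dpp-2} in part (i) first and then deduce (ii) and (iii) from it. The engine behind every step is the flow (semigroup) property of the controlled equation \eqref{eq:3.2}: since Lemma \ref{lem:3.1} guarantees a unique strong solution, for any $u(\cdot)\in\mathscr{U}_\tau$ and $\sigma\in\mathscr{T}_\tau$ one has $X^{\tau,\xi;u}(s)=X^{\sigma,X^{\tau,\xi;u}(\sigma);u}(s)$ for $s\in[\![\sigma,T]\!]$, $\mathbb{P}$-a.s. Splitting the integral in the cost \eqref{eq:3.3} at $\sigma$, conditioning on $\mathscr{F}_\sigma$ and using $\mathscr{F}_\tau\subset\mathscr{F}_\sigma$ with the tower property, the flow property turns the tail into a subproblem cost and yields, for every $u(\cdot)\in\mathscr{U}_\tau$, the identity
\[
J(u(\cdot);\tau,\xi)=\mathbb{E}^{\mathscr{F}_\tau}\Big[\int_\tau^\sigma f(s,X^{\tau,\xi;u}(s),u(s))\,ds+J(u(\cdot);\sigma,X^{\tau,\xi;u}(\sigma))\Big].
\]

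Part (i) then splits into two inequalities. The easy direction ``$\ge$'' follows by bounding $J(u;\sigma,\cdot)\ge\mathscr{V}(\sigma,\cdot)$ in the identity above and taking $\essinf$ over $u(\cdot)$, giving $\mathscr{V}(\tau,\xi)\ge$ the right-hand side of \eqref{eq:dpp-1}. The reverse inequality is the crux. Fixing $u(\cdot)\in\mathscr{U}_\tau$, for each $\eps>0$ I would glue $u(\cdot)$ on $[\![\tau,\sigma]\!]$ with an $\eps$-optimal control for the subproblem started at $(\sigma,X^{\tau,\xi;u}(\sigma))$ on $[\![\sigma,T]\!]$; predictability is preserved by concatenation, so the result $u^\eps(\cdot)$ is admissible and satisfies $\mathscr{V}(\tau,\xi)\le J(u^\eps;\tau,\xi)\le\mathbb{E}^{\mathscr{F}_\tau}[\int_\tau^\sigma f\,ds+\mathscr{V}(\sigma,X^{\tau,\xi;u}(\sigma))]+\eps$. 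Letting $\eps\downarrow0$ and taking $\essinf$ over $u(\cdot)$ closes the identity. The main obstacle is exactly the measurability of this gluing: the near-optimal control on $[\![\sigma,T]\!]$ must depend measurably on the random initial datum $X^{\tau,\xi;u}(\sigma)$. The standard remedy I would follow is a countable measurable-partition argument, approximating the $\mathscr{F}_\sigma$-measurable state by simple random variables and selecting $\eps$-optimal controls on each atom, underpinned by the facts that $\{J(v;\sigma,\cdot):v\in\mathscr{U}_\sigma\}$ is stable under pairwise minimum (as in Proposition \ref{prop:quadr}(ii)) and that $\essinf$ commutes with $\mathbb{E}^{\mathscr{F}_\sigma}[\cdot]$ by Lemma \ref{lem:2.1}. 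Identity \eqref{eq:dpp-2} is the specialization of \eqref{eq:dpp-1} to $u=\bar u$: Proposition \ref{prop:quadr}(ii) supplies the optimizer, and the inequalities above collapse to equalities because the restriction of $\bar u$ to $[\![\sigma,T]\!]$ is again optimal for the subproblem.

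For part (ii), the $\mathscr{T}$-submartingale property reads off \eqref{eq:dpp-1}. For $\sigma_1\in\mathscr{T}_\tau$ and $\sigma_2\in\mathscr{T}_{\sigma_1}$, splitting the running cost in $\mathscr{J}^{\tau,x,u}(\sigma_2)$ at $\sigma_1$ and conditioning gives
\[
\mathbb{E}^{\mathscr{F}_{\sigma_1}}\big[\mathscr{J}^{\tau,x,u}(\sigma_2)\big]=\int_\tau^{\sigma_1}f\,dr+\mathbb{E}^{\mathscr{F}_{\sigma_1}}\Big[\int_{\sigma_1}^{\sigma_2}f\,dr+\mathscr{V}(\sigma_2,X^{\tau,x;u}(\sigma_2))\Big],
\]
and the ``$\ge$'' half of \eqref{eq:dpp-1}, applied at $(\sigma_1,X^{\tau,x;u}(\sigma_1))$ with horizon $\sigma_2$, bounds the bracket below by $\mathscr{V}(\sigma_1,X^{\tau,x;u}(\sigma_1))$; hence $\mathbb{E}^{\mathscr{F}_{\sigma_1}}[\mathscr{J}^{\tau,x,u}(\sigma_2)]\ge\mathscr{J}^{\tau,x,u}(\sigma_1)$, with integrability from Proposition \ref{prop:quadr}(i). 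Taking $u=\bar u$ and invoking \eqref{eq:dpp-2} turns these into equalities, giving the $\mathscr{T}$-martingale. The $\essinf$ representation of $\mathscr{J}^{\tau,x,u}(\sigma)$ over the frozen class $\mathscr{U}_\sigma^{u(\cdot)}$ is the conditional form of the same DPP, again via Lemma \ref{lem:2.1}.

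Finally, part (iii) is immediate from the $\mathscr{T}$-martingale property. Evaluating $\mathscr{J}^{\tau,x,\bar u}$ at $\sigma=\tau$ gives $\mathscr{J}^{\tau,x,\bar u}(\tau)=\mathscr{V}(\tau,x)$ (empty running integral and $X^{\tau,x;\bar u}(\tau)=x$), while the martingale identity between $\tau$ and $T$ combined with the terminal value $\mathscr{V}(T,\cdot)=\langle M\cdot,\cdot\rangle$ gives $\mathscr{J}^{\tau,x,\bar u}(\tau)=\mathbb{E}^{\mathscr{F}_\tau}[\mathscr{J}^{\tau,x,\bar u}(T)]=J(\bar u;\tau,x)$. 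Therefore $\mathscr{V}(\tau,x)=J(\bar u;\tau,x)$, i.e. $\bar u$ is optimal for $\mathscr{P}_{\tau,x}$.
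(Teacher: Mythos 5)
Your proposal is correct, and parts (ii)--(iii) are handled essentially as the paper intends (the paper merely says these follow from (i); your derivations supply the missing details correctly). However, for the core identity \eqref{eq:dpp-1} you take a genuinely different route. The paper never constructs $\varepsilon$-optimal controls: exploiting stability under pairwise minimization (from the proof of Proposition \ref{prop:quadr}(ii)), it takes a minimizing sequence $\{v_m\}\subset\mathscr{U}_\sigma$ for the subproblem at $(\sigma,X^{\tau,\xi;u(\cdot)}(\sigma))$ and applies the interchange of essential infimum and conditional expectation (Lemma \ref{lem:2.1}) to obtain, in a single chain of identities, $\essinf_{v(\cdot)\in\mathscr{U}_\sigma}\mathbb{E}^{\mathscr{F}_\tau}\big[J(v(\cdot);\sigma,X^{\tau,\xi;u(\cdot)}(\sigma))\big]=\mathbb{E}^{\mathscr{F}_\tau}\big[\mathscr{V}(\sigma,X^{\tau,\xi;u(\cdot)}(\sigma))\big]$; it then rewrites $\int_\tau^\sigma f\,ds+J(v(\cdot);\sigma,\cdot)$ as $J(u(\cdot)\otimes v(\cdot);\tau,\xi)$ by concatenation and takes $\essinf_{u(\cdot)}$, noting that the double essential infimum over $(u,v)$ sweeps out all of $\mathscr{U}_\tau$ — so both inequalities fall out simultaneously. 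Your two-inequality scheme, with the hard direction done by pasting an $\varepsilon$-optimal control over a countable $\mathscr{F}_\sigma$-measurable partition, is the classical alternative and rests on exactly the same two ingredients (the lattice property and Lemma \ref{lem:2.1}), so it closes; what the paper's route buys is that it bypasses entirely the selection/pasting construction you yourself flag as ``the main obstacle,'' while your route buys a cleaner separation of the trivial and nontrivial halves of the DPP. One caveat on your remedy: phrasing it as ``approximating the $\mathscr{F}_\sigma$-measurable state by simple random variables'' is not the most direct partition, because even for a deterministic initial state the value $\mathscr{V}(\sigma,x)$ and the costs $J(v;\sigma,x)$ are random (the coefficients are random), so the $\varepsilon$-optimal selection problem persists on each atom; the cleaner partition is by which member $v_m$ of the minimizing sequence is $\varepsilon$-close to $\mathscr{V}(\sigma,X^{\tau,\xi;u(\cdot)}(\sigma))$, applied directly to the random initial datum, which also avoids invoking continuity of $J$ and $\mathscr{V}$ in the state variable (available here anyway, since both are quadratic forms with bounded coefficients, cf. \eqref{eq:V-K-relat}).
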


\begin{proof} 

(i) Similar as \eqref{eq:min-seq}, there is a minimizing sequence
$\{v_{m}(\cdot)\}\subset\mathscr{U}_{\sigma}$ of Problem $\mathscr{P}_{\sigma,X^{\tau,\xi;u(\cdot)}(\sigma)}$
such that, then we have for any $v(\cdot)\in\mathscr{U}_{\sigma}$,
\begin{align*}
\mathbb{E}^{\mathscr{F}_{\tau}}\Big[J(v(\cdot);\sigma,X^{\tau,\xi;u(\cdot)}(\sigma))\Big]\geq & \mathbb{E}^{\mathscr{F}_{\tau}}\Big[\mathscr{V}(\sigma,X^{\tau,\xi;u(\cdot)}(\sigma))\Big]\\
= & \mathbb{E}^{\mathscr{F}_{\tau}}\Big[\inf_{m}J(v_{m}(\cdot);\sigma,X^{\tau,\xi;u(\cdot)}(\sigma))\Big]\\
= & \essinf_{m}\mathbb{E}^{\mathscr{F}_{\tau}}\Big[J(v_{m}(\cdot);\sigma,X^{\tau,\xi;u(\cdot)}(\sigma))\Big]\\
\geq & \essinf_{v(\cdot)\in\mathscr{U}_{\sigma}}\mathbb{E}^{\mathscr{F}_{\tau}}\Big[J(v(\cdot);\sigma,X^{\tau,\xi;u(\cdot)}(\sigma))\Big].
\end{align*}
 Taking $\essinf_{v(\cdot)\in\mathscr{U}_{\sigma}}$ on the left hand
side of above inequality, then the inequalities turn to equalities.
We have 
\[
\essinf_{v(\cdot)\in\mathscr{U}_{\sigma}}\mathbb{E}^{\mathscr{F}_{\tau}}\Big[J(v(\cdot);\sigma,X^{\tau,\xi;u(\cdot)}(\sigma))\Big]=\mathbb{E}^{\mathscr{F}_{\tau}}\Big[\mathscr{V}(\sigma,X^{\tau,\xi;u(\cdot)}(\sigma))\Big].
\]
 Furthermore for any $u(\cdot)\in\mathscr{U}_{\tau}$, 
\begin{align*}
 & \mathbb{E}^{\mathscr{F}_{\tau}}\Big[\int_{\tau}^{\sigma}f(s,X^{\tau,\xi;u(\cdot)}(s),u(s))ds+\mathscr{V}(\sigma,X^{\tau,\xi;u(\cdot)}(\sigma))\Big]\\
= & \essinf_{v(\cdot)\in\mathscr{U}_{\sigma}}\mathbb{E}^{\mathscr{F}_{\tau}}\Big[\int_{\tau}^{\sigma}f(s,X^{\tau,\xi;u(\cdot)}(s),u(s))ds+J(v(\cdot);\sigma,X^{\tau,\xi;u(\cdot)}(\sigma))\Big]\\
= & \essinf_{v(\cdot)\in\mathscr{U}_{\sigma}}\mathbb{E}^{\mathscr{F}_{\tau}}\Big[J(u(\cdot)\otimes v(\cdot);\tau,\xi)\Big],
\end{align*}
where $u(\cdot)\otimes v(\cdot)=u(\cdot)$ on $[\![\tau,\sigma]\!]$,
and $u(\cdot)\otimes v(\cdot)=v(\cdot)$ on $[\![\sigma,T]\!]$. \eqref{eq:dpp-1}
is the result of taking $\essinf_{u(\cdot)\in\mathscr{U}_{\tau}}$
on both sides of above equality.

If $\bar{u}(\cdot)\in\mathscr{U}_{\tau}$ is the optimal control for
$\mathscr{P}_{\tau,\xi}$, then its restriction $\bar{u}\big|_{[\![\sigma,T]\!]}(\cdot)$
is the optimal control for $\mathscr{P}_{\tau,X^{\tau,\xi;\bar{u}(\cdot)}(\sigma)}$.
Then \eqref{eq:dpp-2} follows. Then assertion (i) holds.

In view of (i), it is easy to check that (ii) and (iii) hold.

\end{proof}

\begin{lem} Let Assumptions \ref{ass:4.1} be satisfied. Then for
each $x\in\mathbb{R}^{n},$ the $\mathscr{T}$-systems $\mathscr{V}_{x}$
and $\mathscr{K}=\{\mathscr{K}(\tau),\tau\in{\mathscr{T}}\}$ are
RCE. \end{lem}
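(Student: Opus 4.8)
The plan is to reduce the matrix system $\mathscr{K}$ to the scalar systems $\mathscr{V}_x$ and prove RCE for the latter directly. By the polarization formula \eqref{eq:3.9}, each entry $\mathscr{K}(\tau)_{ij}=\tfrac14\big(\mathscr{V}(\tau,e_i+e_j)-\mathscr{V}(\tau,e_i-e_j)\big)$ is a fixed linear combination of values $\mathscr{V}(\tau,x)$ at constant vectors, where $e_1,\dots,e_n$ is the standard basis. Hence, once I show that $\mathbb{E}[\mathscr{V}(\tau_n,x)]\to\mathbb{E}[\mathscr{V}(\tau,x)]$ for every fixed $x\in\mathbb{R}^n$ and every sequence $\tau_n\searrow\tau$ of stopping times, linearity of the expectation gives the RCE of each entry of $\mathscr{K}$, hence of $\mathscr{K}$ itself. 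So I fix $x$ and $\tau_n\searrow\tau$ and aim at the single limit $\lim_n\mathbb{E}[\mathscr{V}(\tau_n,x)]=\mathbb{E}[\mathscr{V}(\tau,x)]$.

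The engine is the exact martingale identity coming from the dynamic programming principle. Let $\bar u\in\mathscr{U}_\tau$ be the optimal control of $\mathscr{P}_{\tau,x}$ supplied by Proposition \ref{prop:quadr}(ii), and write $X:=X^{\tau,x;\bar u}$. By Theorem \ref{thm: dpp}(ii) the family $\mathscr{J}^{\tau,x,\bar u}$ is a $\mathscr{T}$-martingale; evaluating its defining relation \eqref{eq:dpp-3} at $\tau$ and at $\tau_n\in\mathscr{T}_\tau$ and taking expectations produces
\[
\mathbb{E}\big[\mathscr{V}(\tau_n,X(\tau_n))\big]+\mathbb{E}\Big[\int_\tau^{\tau_n}f(r,X(r),\bar u(r))\,dr\Big]=\mathbb{E}[\mathscr{V}(\tau,x)].
\]
Since $f\ge 0$ and $\int_\tau^T f(r,X(r),\bar u(r))\,dr$ is integrable by Assumption \ref{ass:4.1} and the a priori estimate \eqref{eq:2.4}, the running-cost term is dominated by this integrable majorant and tends to $0$ almost surely as $\tau_n\searrow\tau$, so dominated convergence gives $\mathbb{E}\big[\int_\tau^{\tau_n}f\,dr\big]\to 0$. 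Consequently $\mathbb{E}[\mathscr{V}(\tau_n,X(\tau_n))]\to\mathbb{E}[\mathscr{V}(\tau,x)]$.

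It then remains to replace the propagated random state $X(\tau_n)$ by the fixed vector $x$, i.e. to show $\mathbb{E}[\mathscr{V}(\tau_n,X(\tau_n))]-\mathbb{E}[\mathscr{V}(\tau_n,x)]\to 0$. Here I exploit the quadratic representation $\mathscr{V}(\tau_n,\cdot)=\langle\mathscr{K}(\tau_n)\cdot,\cdot\rangle$ together with the uniform bound $0\le\mathscr{K}(\tau_n)\le\lambda I$ coming from Proposition \ref{prop:quadr}(i). Expanding the quadratic form,
\[
\big|\mathscr{V}(\tau_n,X(\tau_n))-\mathscr{V}(\tau_n,x)\big|\le\lambda\,|X(\tau_n)-x|\,\big(|X(\tau_n)|+|x|\big),
\]
so Cauchy--Schwarz bounds the expectation by $\lambda\,\|X(\tau_n)-x\|_{L^2}\,\big\||X(\tau_n)|+|x|\big\|_{L^2}$. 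The second factor is bounded uniformly in $n$ by \eqref{eq:2.4}, while $X(\tau_n)\to X(\tau)=x$ almost surely by right-continuity of the RCLL paths; since $|X(\tau_n)-x|^2\le 2\big(\sup_{\tau\le t\le T}|X(t)|^2+|x|^2\big)\in L^1$, dominated convergence yields $\|X(\tau_n)-x\|_{L^2}\to 0$. Combining this with the previous paragraph gives $\mathbb{E}[\mathscr{V}(\tau_n,x)]\to\mathbb{E}[\mathscr{V}(\tau,x)]$, which is the RCE of $\mathscr{V}_x$, and the RCE of $\mathscr{K}$ follows as explained.

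The step I expect to be the main obstacle is exactly the state-shift in the last paragraph: the DPP martingale naturally delivers the value at the random propagated state $X(\tau_n)$ rather than at the fixed initial datum $x$, and closing this gap is what genuinely requires the uniform essential boundedness of the family $\{\mathscr{K}(\tau)\}_{\tau\in\mathscr{T}}$ (so that the quadratic form is Lipschitz on bounded sets uniformly in $\tau_n$) together with the $L^2$-continuity of the controlled flow at $\tau$. Everything else is a routine consequence of the dynamic programming principle and the a priori estimates already available.
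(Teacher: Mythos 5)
Your proof is correct and follows essentially the same route as the paper: both use the dynamic programming identity along the optimal control of $\mathscr{P}_{\tau,x}$, split the error into the running-cost term (killed by the a priori estimate and dominated convergence) and the state-shift term $\mathscr{V}(\tau_n,X(\tau_n))-\mathscr{V}(\tau_n,x)$ (controlled via the uniform bound on $\mathscr{K}$, Cauchy--Schwarz, and $L^2$-continuity of the flow at $\tau$), and then transfer RCE from $\mathscr{V}_x$ to $\mathscr{K}$ by polarization.
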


\begin{proof} For any $\tau\in\mathscr{T}_{0}$, $\tau_{m}\in\mathscr{T}_{\tau}$
satisfying that $\tau_{m}\searrow\tau$ a.s. as $m\to\infty$. By
(i) of Theorem \ref{thm: dpp}, for the optimal control $\bar{u}(\cdot)$
of Problem $\mathscr{P}_{\tau,x}$,
\begin{equation}
\mathscr{V}(\tau,x)=\mathbb{E}^{\mathscr{F}_{\tau}}\bigg[\int_{\tau}^{\tau_{m}}f(s,X^{\tau,x;\bar{u}(\cdot)}(s),\bar{u}(s))ds+\mathscr{V}(\tau_{m},X^{\tau,x;\bar{u}(\cdot)}(\tau_{m}))\bigg].\label{eq:v-tau}
\end{equation}
Since $\mathscr{K}$ is uniformly bounded,
\begin{align*}
 & \mathbb{E}^{\mathscr{F}_{\tau}}\text{\ensuremath{\bigg[}}\Big|\mathscr{V}(\tau_{m},X^{\tau,x;\bar{u}(\cdot)}(\tau_{m}))-\mathscr{V}(\tau_{m},x)\Big|\bigg]\\
= & \mathbb{E}^{\mathscr{F}_{\tau}}\bigg[\Big|\big\langle\mathscr{K}(\tau_{m})X^{\tau,x;\bar{u}(\cdot)}(\tau_{m}),X^{\tau,x;\bar{u}(\cdot)}(\tau_{m})\big\rangle-\big\langle\mathscr{K}(\tau_{m})x,x\big\rangle\Big|\bigg]\\
\leq & \lambda\bigg(\mathbb{E}^{\mathscr{F}_{\tau}}\Big[|x|+|X^{\tau,x;\bar{u}(\cdot)}(\tau_{m})|\Big]^{2}\bigg)^{\frac{1}{2}}\bigg(\mathbb{E}^{\mathscr{F}_{\tau}}\big[X^{\tau,x;\bar{u}(\cdot)}(\tau_{m})-x\big]^{2}\bigg)^{\frac{1}{2}},
\end{align*}
and 
\[
\mathbb{E}^{\mathscr{F}_{\tau}}\bigg[\int_{\tau}^{\tau_{m}}f(s,X^{\tau,x;\bar{u}(\cdot)}(s),\bar{u}(s))ds\bigg]\leq C\mathbb{E}^{\mathscr{F}_{\tau}}\bigg[\int_{\tau}^{\tau_{m}}\big(\big|X^{\tau,x;\bar{u}(\cdot)}(s)\big|^{2}+|\bar{u}(s)|^{2}\big)ds\bigg],
\]
Then by \eqref{eq:v-tau}, the estimate \eqref{eq:2.4} and the dominate
control theorem, we get 
\begin{align*}
 & \mathbb{E}^{\mathscr{F}_{\tau}}\bigg[\big|\mathscr{V}(\tau,x)-\mathscr{V}(\tau_{m},x)\big|\bigg]\\
\leq & \mathbb{E}^{\mathscr{F}_{\tau}}\Big[\int_{\tau}^{\tau_{m}}|f(s,X^{\tau,x;\bar{u}}(s),\bar{u}(s))ds|+\Big|\mathscr{V}(\tau_{m},X^{\tau,x;\bar{u}}(\tau_{m}))-\mathscr{V}(\tau_{m},x)\Big|\Big]\\
\to & 0,\qquad{\rm as}\:m\to\infty.
\end{align*}
 This is the RCE of $\mathscr{V}_{x}$. The RCE of $\mathscr{K}$
is a direct inference of that of $\mathscr{V}_{x}$ and \eqref{eq:V-K-relat}.\end{proof}

\begin{thm} \label{thm:DM-decomposition} Let Assumptions \ref{ass:4.1}
be satisfied.\\
 (i) For any $\tau\in\mathscr{T}$ and $(x,u(\cdot))\in\mathbb{R}^{n}\times\mathscr{U}_{\tau},$
the $\mathscr{T}_{\tau}$-system $\mathscr{J}^{\tau,x,u(\cdot)}$
is RCE and aggregated by a RCLL $\mathscr{F}$-submartingale denoted
by $\{\mathbb{J}^{\tau,x,u(\cdot)}(t),t\in[\![\tau,T]\!]\}$. For
the optimal control $\bar{u}(\cdot)\in\mathscr{U}_{\tau}$ of Problem
$\mathscr{P}_{\tau,x},$ the corresponding $\mathscr{T}_{\tau}$-system
$\mathscr{J}^{\tau,x,\bar{u}(\cdot)}$ is aggregated by a RCLL $\mathscr{F}$-martingale
denoted by $\{\mathbb{J}^{\tau,x,\bar{u}(\cdot)}(t),t\in[\![\tau,T]\!]\}$.\\
 (ii) The $\mathscr{T}$-system $\{\mathscr{K}(\tau),\tau\in\mathscr{T}\}$
is RCE and aggregated by a RCLL process denoted by $\{K(t),t\in[0,T]\}.$
$K$ is essentially bounded and $\mathbb{S}_{+}^{n}$-valued. We have
for any $t\in[0,T]$
\begin{equation}
\begin{split}K(t)=K(0)-\int_{0}^{t}dk(s)+\sum_{i=1}^{d}\int_{0}^{t}L^{i}(s)dW_{s}^{i}+\int_{0}^{t}\int_{\Lambda}R(s,e)d\tilde{\mu}(de,ds),\quad K(T)=M,\end{split}
\label{eq:3.16}
\end{equation}
where $k$ is an $\mathbb{S}^{n}$-valued predictable process of bounded
variation, $L^{i}$ an $\mathbb{S}^{n}$-valued predictable process
and $R$ a $\tilde{\mathscr{P}}$-measurable process. \\
 (iii) The condition minimal value system $\mathscr{V}_{x}$ for $x\in\mathbb{R}^{n}$
is aggregated by the following RCLL semimartingale 
\[
V(t,x):=\langle K(t)x,x\rangle,\quad t\in[0,T].
\]
\end{thm}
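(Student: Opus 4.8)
The plan is to promote the submartingale systems supplied by Theorem~\ref{thm: dpp} to genuine RCLL processes through Proposition~\ref{prop: aggregation}, then to construct $K$ by transporting the resulting path regularity through the homogeneous state flow, and finally to read off \eqref{eq:3.16} from the canonical decomposition of a special semimartingale together with the martingale representation theorem. Part (iii) will then be immediate.

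\emph{Part (i).} By Theorem~\ref{thm: dpp}(ii) the family $\mathscr{J}^{\tau,x,u(\cdot)}$ is a submartingale $\mathscr{T}_\tau$-system, and a martingale for the optimal $\bar u(\cdot)$. I would first verify the RCE property: for $\sigma_n\searrow\sigma$ write $\mathscr{J}^{\tau,x,u(\cdot)}(\sigma_n)=\mathscr{V}(\sigma_n,X^{\tau,x;u(\cdot)}(\sigma_n))+\int_\tau^{\sigma_n}f(s,X^{\tau,x;u(\cdot)}(s),u(s))\,ds$; the running-cost term converges in $\mathbb{E}$ by dominated convergence using \eqref{eq:2.4}, and the value term converges by the RCLL property of $X$, the essential boundedness of $\mathscr{K}$, and the RCE of $\mathscr{K}$, exactly by the estimates used in the preceding Lemma. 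Since $f\geq0$ and $\mathscr{V}\geq0$, the system $\mathscr{J}^{\tau,x,u(\cdot)}$ is nonnegative, so $-\mathscr{J}^{\tau,x,u(\cdot)}$ is an RCE supermartingale $\mathscr{T}_\tau$-system with $-\mathscr{J}^{\tau,x,u(\cdot)}(\tau)\leq0<\infty$; the $\mathscr{T}_\tau$-version of Proposition~\ref{prop: aggregation} then produces the aggregating RCLL submartingale $\mathbb{J}^{\tau,x,u(\cdot)}$. For $\bar u(\cdot)$ both $\pm\mathscr{J}^{\tau,x,\bar u(\cdot)}$ are RCE supermartingale systems, so the aggregation is an RCLL martingale.

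\emph{Part (ii).} Because the state equation \eqref{eq:3.2} with $u\equiv0$ and $\xi=x$ is linear and homogeneous, one has $X^{0,x;0}(t)=\Phi(t)x$ for the matrix fundamental solution $\Phi$. Applying (i) with $u\equiv0$, the RCLL submartingale $\mathbb{J}^{0,x,0}(t)=\langle\mathscr{K}(t)\Phi(t)x,\Phi(t)x\rangle+\int_0^t\langle Q(s)\Phi(s)x,\Phi(s)x\rangle\,ds$ shows, after subtracting the absolutely continuous integral, that $t\mapsto\langle\Phi(t)^*\mathscr{K}(t)\Phi(t)x,x\rangle$ admits an RCLL version for each $x$. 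Polarizing over $x\in\{e_i,\,e_i\pm e_j\}$, the $\mathbb{S}^n$-valued process $\hat K:=\Phi^*\mathscr{K}\Phi$ has an RCLL version, which is moreover a special semimartingale since $\mathbb{J}^{0,x,0}$ is a bounded submartingale to which Doob--Meyer applies. I would then set $K:=(\Phi^{-1})^*\hat K\,\Phi^{-1}$, so that $K$ aggregates $\mathscr{K}$ and inherits essential boundedness and $\mathbb{S}_+^n$-valuedness from Proposition~\ref{prop:quadr}.

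\emph{The main obstacle} is the invertibility of $\Phi$ on the whole of $[0,T]$: at a jump time the transition $I+E(t,\Delta p_t)$ need not be invertible in the absence of condition \eqref{eq:not-neces-condit}, so $\Phi$ may be singular and no global inverse flow need exist. I would resolve this by localizing at the jumps: on each stochastic interval $(\!(\tau_i,\tau_{i+1})\!)$ between consecutive jump times the trajectory is continuous and $\Phi$ is invertible, so $K$ is a well-defined RCLL process there, and one patches across the jump times using the semimartingale structure to obtain a single RCLL aggregation of $\mathscr{K}$ on $[0,T]$. Once $K$ is known to be a special semimartingale, its canonical decomposition furnishes a predictable bounded-variation part, which I name $-\int_0^\cdot dk$, and a local martingale part; the martingale representation theorem in the Brownian--Poisson filtration writes the latter as $\sum_i\int_0^\cdot L^i\,dW^i+\int_0^\cdot\!\int_\Lambda R\,d\tilde\mu$, yielding \eqref{eq:3.16}. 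Finally, part (iii) follows at once: $V(t,x)=\langle K(t)x,x\rangle$ aggregates $\mathscr{V}_x$ because $K$ aggregates $\mathscr{K}$ and $\mathscr{V}(\tau,\xi)=\langle\mathscr{K}(\tau)\xi,\xi\rangle$, and $V(\cdot,x)$ is an RCLL semimartingale as a quadratic form in the semimartingale $K$.
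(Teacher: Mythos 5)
Your proposal is correct and follows essentially the same route as the paper: aggregation of the RCE submartingale $\mathscr{T}_{\tau}$-systems via Proposition \ref{prop: aggregation}, polarization of the zero-control systems, transport of the value system through the homogeneous state flow, localization between consecutive Poisson jump times to get around the failure of invertibility without condition \eqref{eq:not-neces-condit}, and then Doob--Meyer plus martingale representation to obtain \eqref{eq:3.16}. The one point you should make explicit (as the paper does with its restarted flows $\Phi_k$, $\Phi_k(\tau_k\wedge T)=I$) is that in the localized step the flow must be restarted at the identity at each jump time $\tau_k$ --- the global fundamental solution $\Phi$, once singular at a jump, stays singular thereafter, so "$\Phi$ is invertible on $(\!(\tau_k,\tau_{k+1})\!)$" only holds for the restarted flow --- and that the patched process is a special semimartingale because its jumps are controlled by the uniform bound on $\mathscr{K}$.
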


\begin{proof} In view of \eqref{eq:dpp-3}, the REC of family $\mathscr{J}^{\tau,x,u(\cdot)}$
comes from that of the $\mathscr{T}$-system $\mathscr{V}_{x}$ and
the a.s. right continuity of maps $t\mapsto X_{t}^{\tau,x,u(\cdot)}$
and $t\mapsto\int_{\tau}^{t}f(s,X(s),u(s))ds$. Using Proposition
\ref{prop: aggregation}, we prove the first part of assertion (i).
From the second part of Theorem \ref{thm: dpp}, we see that $\mathscr{J}^{\tau,x,\bar{u}(\cdot)}$
is a $\mathscr{F}$-martingale.\bigskip{}

Now we begin to show the assertion (ii). Denote by $\tau_{k}$ the
$n$-th jump time of the Poisson point process. Recall that $e_{i}$
is the unit column vector whose $i$-th component is the number $1$
for $i=1,\cdots,n.$ We see that for $x=e_{i},e_{i}+e_{j},e_{i}-e_{j}$
with $i,j=1,\cdots,n$, the process $\mathbb{J}^{k,x}(t):={\mathscr{J}}^{\tau_{k}\wedge T,x,0}(t),t\in[\![\tau_{k}\wedge T,T]\!]$
is a right-continuous submartingale and $\mathbb{J}^{k,x}(\cdot)-\mathbb{J}^{k,x}(\tau_{k}\wedge T)$
is of \textit{class D}. Hence by Doob-Meyer decomposition (see \cite[Theorem 11 in Section III.3]{protter2005stochastic}),
it could be decomposed to an increasing, predictable process and a
uniformly integrable martingale. Consider an $\mathbb{S}^{n}$-valued
$\mathscr{T}_{\tau_{k}\wedge T}$-system $\Gamma_{k}:=\{\Gamma_{k}(\tau),\tau\in\mathscr{T}_{\tau_{k}\wedge T}\}$
defined as follows: 
\begin{equation}
\Gamma_{k}(\tau):=\frac{1}{4}\Big(\mathscr{J}^{\tau_{k}\wedge T,e_{i}+e_{j},0}(\tau)-\mathscr{J}^{\tau_{k}\wedge T,e_{i}-e_{j},0}(\tau)\Big)_{1\leq i,j\leq n},\quad\tau\in\mathscr{T}_{\tau_{k}\wedge T}.\label{eq:semimartiggle-2}
\end{equation}
In view of $X^{\tau_{k}\wedge T,e_{i}\pm e_{j},0}(\tau)=X^{\tau_{k}\wedge T,e_{i},0}(\tau)\pm X^{\tau_{k}\wedge T,e_{j},0}(\tau)$,
\eqref{eq:V-K-relat} and the proof of \eqref{eq:3.9}, we have 
\begin{align*}
 & \big(\mathscr{V}(\tau,X^{\tau_{k}\wedge\tau,e_{i}+e_{j},0}(\tau))-\mathscr{V}(\tau,X^{\tau_{k}\wedge\tau,e_{i}-e_{j},0}(\tau))\big)_{1\leq i,j\le n}\\
= & \big(X^{\tau_{k}\wedge\tau,e_{1},0}(\tau),\ldots,X^{\tau_{k}\wedge\tau,e_{n},0}(\tau)\big)^{*}\mathscr{K}(\tau)\big(X^{\tau_{k}\wedge\tau,e_{1},0}(\tau),\ldots,X^{\tau_{k}\wedge\tau,e_{n},0}(\tau)\big).
\end{align*}
This together with \eqref{eq:dpp-3} and \eqref{eq:semimartiggle-2}
yields
\[
\Gamma_{k}(\tau)=\Phi_{k}^{*}(\tau)\mathscr{K}(\tau)\Phi_{k}(\tau)+\int_{\tau_{k}\wedge T}^{\tau}\Phi_{k}^{*}(r)Q(r)\Phi_{k}(r)dr,
\]
where $\Phi_{k}(t)$ is the solution of the following linear SDE:
\begin{equation}
\left\{ \begin{array}{l}
d\Phi(t)=A(t)\Phi(t-)dt+\sum_{i=1}^{d}C^{i}(t)\Phi(t-)dW^{i}(t)+\int_{\Lambda}E(t,e)\Phi({t-})\tilde{\mu}(de,dt),\\
\Phi(\tau_{k}\wedge T)=I,\qquad\qquad t\in(\negthickspace(\tau_{k}\wedge T,\tau_{k+1}\wedge T)\negthickspace).
\end{array}\right.\label{eq:sm-3-0}
\end{equation}
The $\mathscr{T}_{\tau_{k}\wedge T}-$ system $\Gamma_{k}$ is aggregated
by the following process still denoted by $\{\Gamma_{k}(t),t\in[\![\tau_{k}\wedge T,T)\negthickspace)\}:$
\[
\Gamma_{k}(t)=:\frac{1}{4}(\mathbb{J}^{k,e_{i}+e_{j}}(t)-\mathbb{J}^{k,e_{i}-e_{j}}(t))_{1\leq i,j\leq n},t\in[\![\tau_{k}\wedge T,T)\negthickspace),
\]
which is a right-continuous semimartingale with predictable of bounded
variational part. We see that $\Phi_{k}(t)$ is reversible for $t\in(\negmedspace(\tau_{k}\wedge T,\tau_{k+1}\wedge T)\negmedspace)$\textcolor{brown}{{}
}and its inverse $\Psi_{k}(t):=\Phi_{k}^{-1}(t)$ satisfying 
\begin{equation}
\left\{ \begin{array}{l}
d\Psi_{k}(t)=\Psi_{k}(t-)\bigg[-A(t)+C^{2}(t)+\int_{\Lambda}E(t,e)\nu(de)\bigg]dt-\sum_{i=1}^{d}\Psi_{k}(t-)C^{i}(t)dW^{i}(t),\\
\Psi_{k}(\tau_{k}\wedge T)=I,\qquad\qquad t\in(\negthickspace(\tau_{k}\wedge T,\tau_{k+1}\wedge T)\negthickspace).
\end{array}\right.\label{eq:sm-3}
\end{equation}
 It is obvious that $\Psi_{k}(t)$ is continuous at $[\![\tau_{k}\wedge T,\tau_{k+1}\wedge T)\negthickspace)$
and has left-limit at $\tau_{k+1}\wedge T$. Define 
\[
K_{k}(t):=\Psi_{k}^{*}(t)\Gamma_{k}(t)\Psi_{k}(t)-\Psi_{k}^{*}(t)\int_{\tau_{k}\wedge T}^{t}\Phi_{k}^{*}(s)Q(s)\Phi_{k}(s)ds\Psi_{K}(t),\,t\in[\![\tau_{k}\wedge T,\tau_{k+1}\wedge T)\negthickspace).
\]
It is continuous on $[\![\tau_{k}\wedge T,\tau_{k+1}\wedge T)\negmedspace)$
and has left-limit at $\tau_{k+1}\wedge T$. By It\^o formula, $K_{k}$
is a semimartingale, i.e. 
\[
K_{k}(t)=K_{k}(\tau_{k}\wedge T)+\tilde{M}_{k}(t)+\tilde{A}_{k}(t),\quad t\in[\![\tau_{k}\wedge T,\tau_{k+1}\wedge T)\negthickspace)
\]
where $\tilde{M}_{k}$ with $\tilde{M}_{k}(\tau_{k}\wedge T)=0$ is
a local martingale and $\tilde{A}$ with $\tilde{A}(\tau_{k}\wedge T)=0$
a predictable process with finite variation. We see that $\mathscr{K}(\tau)=K_{k}(\tau)$
for $\tau_{k}\wedge T\le\tau<\tau_{k+1}\wedge T$. Thus $\mathscr{K}$
is aggregated by the process 
\begin{align*}
K(t):= & \sum_{k=0}^{\infty}K_{k}(t)\chi_{\{\tau_{k}\wedge T\le t<\tau_{k+1}\wedge T\}}\\
= & \Big(\sum_{\tau_{k+1}\leq t}\tilde{M}_{k}\big((\tau_{k+1}\wedge T)-\big)+\tilde{M}_{i}(t)\Big)+\Big(\sum_{\tau_{k+1}\leq t}\tilde{A}_{k}\big((\tau_{k+1}\wedge T)-\big)+\tilde{A}_{i}(t)\Big)\\
 & \quad+\sum_{\tau_{k}\leq t,k>1}\big(K_{k}(\tau_{k}\wedge T)-K_{k-1}((\tau_{k}\wedge T)-)\big),
\end{align*}
where $i$ is the maximal integer with $\tau_{i}\leq t$. It is easy
to observe that the first term of the right hand of above equality
is a continuous martingale, the second term is continuous bounded
variational process, and the third term is a pure jump process. By
localizing method, it is easy to know the first part of last term
is a local martingale, second part a finite variational predictable
process. According to $K_{k}$ is uniformly bounded, Theorem 35 in
\cite[Section III.7]{protter2005stochastic} yields the pure jump
process $\sum_{\tau_{k}\leq t,k>1}\big(K_{k}(\tau_{k}\wedge T)-K_{k-1}((\tau_{k}\wedge T)-)\big)$
is a \textit{special semimartingale}. Thus $K$ could be canonically
decomposed into the sum of an $\mathscr{F}$-predictable process $k_{t}$
with finite variation and an $\mathscr{F}$-martingale process on
the whole time interval $[0,T]$. By martingale representation theorem
(see \cite[Section 5, Chapter IV]{protter2005stochastic} or \cite[Lemma 2.3]{TangLi94}
for a easier version), we know that $K$ can be written as (\ref{eq:3.16}). 

At last, the assertion (iii) is just a result of \eqref{eq:V-K-relat}.
Thus we finish the proof. \end{proof}

\begin{rmk}

In \cite{tang2003general,tang2015dynamic}, the inverse flow of the
controlled SDE is the key technique to show $K_{t}$ to be the fist
part of the triple processes solution of BSRE. And in \cite{tang2003general},
the author pays lots of calculus to prove that the inverse flow of
the solution$X$ for SDE associated with the corresponding optimal
control exists on the whole time interval. For the SDE with jump,
its inverse flow may not exists on whole time $[0,T]$ without additional
condition, e.g., 
\begin{equation}
I+E\geq\delta I,\quad{\rm a.e.,a.s.}\label{eq:addi-cond}
\end{equation}
 However, condition \eqref{eq:addi-cond} is not necessary for the
original control problem. So we insist on not introducing the condition
\eqref{eq:addi-cond} in the formulation of our BSREJ. 

We observe that in the form of optimal feedback (see \eqref{eq:feedBackCont}),
$K$ is independent of the state of the controlled equation, which
hint us to represent $K$ by different state process in different
time interval. Hence to overcome the difficulty of absence of \eqref{eq:addi-cond},
we can piece-wisely represent $K$ by the inverse flow on sub-interval
between two adjacent jump time, on which the SDE \eqref{eq:sm-3-0}
has continuous trajectories hence an inverse flow. After that we integrated
the representation of $K$ from piece-wise to whole process on $[0,T]$
by the semimartingale property.\end{rmk}

\section{\label{sec:Exist}Existence of Solutions to BSREJ}

This section is devoted to showing that $(K,L,R)$ given by Theorem
\ref{thm:DM-decomposition} is nothing other than the solution of
BSREJ \eqref{eq:Riccati}, and to giving their estimates. Thus we
establish the existence of solution for BSREJ \eqref{eq:Riccati}.

\begin{thm} Let Assumptions \ref{ass:4.1} be satisfied. Then $(K,L,R)$
given by Theorem \ref{thm:DM-decomposition} satisfies BSREJ (\ref{eq:Riccati}).
And there is a deterministic constant $C$ such that the following
estimate holds: 
\begin{equation}
\mathbb{E}\bigg(\int_{0}^{T}\sum_{i=1}^{d}\big|L^{i}(t)\big|^{2}ds\bigg)+\mathbb{E}\bigg(\int_{0}^{T}\int_{\Lambda}\big|R(t,e)\big|^{2}\nu(de)dt\bigg)\leq C.\label{eq:exist-0}
\end{equation}
 Hence ${\displaystyle \int_{0}}^{\cdot}L^{i}(s)dW_{s}^{i}+{\displaystyle \int}_{0}^{\cdot}\int_{E}R(e,s)\tilde{\mu}(de,ds)$
is a BMO martingale. Moreover ${\displaystyle \int}_{\Lambda}F^{*}(t,e)(K(t-)+R(t,e))F(t,e)\nu(de)$
is nonnegative for almost all $t$, $P$-a.s.. \end{thm}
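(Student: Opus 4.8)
The plan is to prove the three assertions in the order that makes them available to one another: first the nonnegativity of $\int_\Lambda F^*(t,e)(K(t-)+R(t,e))F(t,e)\,\nu(de)$ together with the positive definiteness of $\mathscr{N}$, then the fact that the triple $(K,L,R)$ of Theorem \ref{thm:DM-decomposition} solves \eqref{eq:integ-Riccati}, and finally the integrability estimate \eqref{eq:exist-0} and the BMO property.

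For the positivity I would exploit the observation recorded in the introduction. Since $K$ is $\mathbb{S}_+^n$-valued and its only jumps are generated by the Poisson measure, at every jump time $s$ one has $K(s)=K(s-)+R(s,\Delta p_s)\in\mathbb{S}_+^n$. Hence the process counting the instants at which $K(s-)+R(s,\Delta p_s)\notin\mathbb{S}_+^n$ is pathwise zero, and its predictable compensator $\int_0^\cdot\int_\Lambda\mathbf{1}_{\{K(s-)+R(s,e)\notin\mathbb{S}_+^n\}}\,\nu(de)\,ds$ must vanish as well. Therefore $K(t-)+R(t,e)\in\mathbb{S}_+^n$ for $\nu(de)\,dt\,d\mathbb{P}$-almost every $(e,t,\omega)$, which gives $\int_\Lambda F^*(t,e)(K(t-)+R(t,e))F(t,e)\,\nu(de)\ge0$. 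Combined with $N\ge\delta I$ and $K\ge0$, this yields $\mathscr{N}(t,K(t-),R)\ge\delta I>0$, so $\mathscr{N}^{-1}$ is well defined and Definition \ref{def:solution}(ii) holds.

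Next, for arbitrary $(\tau,x,u)$ I would apply the It\^o product formula to $\langle K(t)X^{\tau,x;u}(t),X^{\tau,x;u}(t)\rangle$ using the decomposition \eqref{eq:3.16} and the dynamics \eqref{eq:3.2}; adding $\int_\tau^\cdot f(s,X,u)\,ds$ recovers the semimartingale $\mathbb{J}^{\tau,x,u}$ of Theorem \ref{thm:DM-decomposition}(i). Collecting the continuous drift, the Brownian quadratic variations, the Brownian cross term between $L$ and the diffusion coefficients, and the jump compensator $\int_\Lambda\{[(I+E)X_-+Fu]^*(K_-+R)[(I+E)X_-+Fu]-X_-^*K_-X_-\}\,\nu(de)$, the predictable finite-variation part becomes a quadratic form in $(X(t-),u(t))$; a direct rearrangement identifies its density as $\langle\mathcal{A}_0 X_-,X_-\rangle+2\langle\mathscr{M}(t,K_-,L,R)u,X_-\rangle+\langle\mathscr{N}(t,K_-,R)u,u\rangle$, minus the contribution $\langle\,dk\,X_-,X_-\rangle$ of the bounded-variation part of $K$, where $\mathcal{A}_0=G+\mathscr{M}\mathscr{N}^{-1}\mathscr{M}^*$ and $X_-,K_-$ abbreviate $X(t-),K(t-)$. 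The bookkeeping that the cross coefficient is exactly $\mathscr{M}$ and the quadratic-in-$u$ coefficient exactly $\mathscr{N}$ is what matches the generator of \eqref{eq:Riccati}. Since this finite-variation part is nondecreasing for every $u$ (submartingale) and identically zero for the optimal control (martingale), subtracting the two shows $\bar u$ minimizes the density pointwise; as $\mathscr{N}>0$ the minimizer is $\bar u=-\mathscr{N}^{-1}\mathscr{M}^*X_-$, and the martingale property forces $\langle(\mathcal{A}_0-\mathscr{M}\mathscr{N}^{-1}\mathscr{M}^*)X_-,X_-\rangle\,dt=\langle\,dk\,X_-,X_-\rangle$, i.e. $\langle G X_-,X_-\rangle\,dt=\langle\,dk\,X_-,X_-\rangle$. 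Because the fundamental matrix $\Phi_k$ of \eqref{eq:sm-3-0} is invertible on each interjump interval, $X(t-)$ ranges over all of $\mathbb{R}^n$ at a.e. $t$, so equality of these symmetric quadratic forms yields the matrix identity $dk=G\,dt$; in particular $k$ is absolutely continuous with density $G$, and substituting into \eqref{eq:3.16} gives exactly \eqref{eq:integ-Riccati}. I expect this passage---the clean accounting of the jump compensator and the deduction of the entrywise identity $dk=G\,dt$ from the martingale/submartingale characterization---to be the main obstacle.

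Finally, for the estimate I would apply It\^o to $|K(t)|^2$ along \eqref{eq:integ-Riccati}, which after localizing by stopping times to control the local martingales gives $\mathbb{E}^{\mathscr{F}_\tau}\int_\tau^T\big(\sum_{i=1}^d|L^i|^2+\int_\Lambda|R|^2\nu(de)\big)\,ds=\mathbb{E}^{\mathscr{F}_\tau}\big[|M|^2-|K(\tau)|^2\big]+2\mathbb{E}^{\mathscr{F}_\tau}\int_\tau^T\langle K,G\rangle\,ds$. Here the nonlinearity is controlled by the sign of the Riccati term: since $K\ge0$ and $\mathscr{M}\mathscr{N}^{-1}\mathscr{M}^*\ge0$ one has $\langle K,\mathscr{M}\mathscr{N}^{-1}\mathscr{M}^*\rangle\ge0$, so $\langle K,G\rangle$ is bounded above by its part linear in $(L,R)$, and Young's inequality absorbs those linear contributions into $\tfrac14(\sum_i|L^i|^2+\int_\Lambda|R|^2\nu)$ while the essential boundedness of $K$ and $M$ leaves a deterministic constant. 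Taking $\tau=0$ yields \eqref{eq:exist-0}; keeping $\tau$ arbitrary yields a uniform bound on $\mathbb{E}^{\mathscr{F}_\tau}\int_\tau^T(\cdots)\,ds$, which is precisely the statement that $\int_0^\cdot L^i\,dW^i+\int_0^\cdot\int_\Lambda R\,\tilde\mu$ is a BMO martingale.
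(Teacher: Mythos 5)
Your proposal is correct in substance, and it is a mix: your step for the equation itself follows the paper's route, while your other two steps genuinely diverge from it, mostly to your advantage. For the identification of \eqref{eq:integ-Riccati} you do what the paper does: It\^o on $\langle K(t)X(t),X(t)\rangle$ via \eqref{eq:3.16} and \eqref{eq:3.2}, the submartingale/martingale characterization of $\mathbb{J}^{\tau,x,u(\cdot)}$ from the dynamic programming principle, the inverse flow on each interjump interval $(\!(\tau_k\wedge T,\tau_{k+1}\wedge T)\!)$ to convert statements along trajectories into pointwise statements in the state variable, and minimization of the resulting quadratic form in $v$. The two genuine differences are these. First, you prove positivity \emph{before} anything else: the vanishing of the compensator of the counting process $\int_0^\cdot\int_\Lambda\mathbf{1}_{\{K(s-)+R(s,e)\notin\mathbb{S}^n_+\}}\mu(de,ds)$ gives $K(t-)+R(t,e)\ge0$ for $\nu(de)\,dt\,d\mathbb{P}$-a.e.\ $(e,t,\omega)$, hence $\mathscr{N}\ge N\ge\delta I$ directly. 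This is a strengthening of the paper's last step (the paper only tests $\int\int y^*F^*(K_-+R)Fy\,\mu\ge0$ against processes $y$ and obtains nonnegativity of the $\nu$-integral, and it obtains invertibility of $\mathscr{N}$ separately from the uniqueness of the optimal control, Proposition \ref{prop:quadr}(ii)); your ordering hands you a uniformly positive $\mathscr{N}$ when you minimize over $v$. Second, your proof of \eqref{eq:exist-0} and the BMO property is genuinely different and much shorter: It\^o on $|K|^2$ along the established equation, discarding the Riccati term by the sign $\mathrm{tr}\big(K_-\mathscr{M}\mathscr{N}^{-1}\mathscr{M}^*\big)\ge0$ (valid since $K\ge0$ and $\mathscr{N}^{-1}>0$), and absorbing the remaining $(L,R)$-dependence of $G$, which is linear with bounded coefficients, by Young's inequality and $\nu(\Lambda)<\infty$. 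The paper instead squares $\langle KX,X\rangle$ along uncontrolled trajectories, uses BDG, moment bounds on the inverse flow interval by interval, a summation over intervals controlled by $\mathbb{E}^{\mathscr{F}_\gamma}\big[\mu([\![\gamma,T]\!]\times\Lambda)\big]\le T\nu(\Lambda)$ for $L$, and a separate jump-coincidence argument for $R$. Your argument replaces all of that, and the localization you sketch (stop when $\int_0^t(\sum_i|L^i|^2+\int_\Lambda|R|^2\nu(de))\,ds$ reaches $n$, use essential boundedness of $K$, then monotone convergence) closes it rigorously.

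Two cautions on your step for the equation. (1) The ``jump compensator'' you display is the compensator of the \emph{total} jump of $\langle KX,X\rangle$; that is the correct object only if you simultaneously rewrite the jump martingales inside $dK$ and $dX$ as raw $\mu$-integrals. If you keep the compensated dynamics \eqref{eq:3.16} and \eqref{eq:3.2} as written---so that the finite-variation contribution of $K$ is only $-\langle dk\,X_-,X_-\rangle$ and the drift of $X$ is $(AX_-+Bu)\,dt$---then only the second-order jump terms $\int_\Lambda\big[2\langle R x,Ex+Fu\rangle+\langle(K_-+R)(Ex+Fu),Ex+Fu\rangle\big]\nu(de)$ enter the drift. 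Mixing the two conventions double-counts $\int_\Lambda\big[\langle Rx,x\rangle+2\langle K_-x,Ex+Fu\rangle\big]\nu(de)$ and would produce the cross coefficient $\mathscr{M}+\int_\Lambda KF\,\nu(de)$ rather than $\mathscr{M}$. Since your final coefficients are the correct ones, this is a repairable bookkeeping slip, not a wrong approach, but it is exactly the place you flagged as delicate, so fix the convention before computing. (2) Your flow argument yields $dk=G\,dt$ as measures on each \emph{open} interjump interval; to get the identity on all of $[0,T]$ you still need the paper's observation that $k$, being a predictable process of bounded variation, cannot charge the totally inaccessible Poisson jump times, so $dk$ has no atoms there and nothing is lost on the exceptional countable set.
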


\begin{proof} \textit{Firstly, we show that $(K,L,R)$ satisfies
satisfies \eqref{eq:integ-Riccati} a.e.a.s.} Define the functional
\begin{align*}
 & \mathbb{F}(t,x,u,K(t),L(t),R(t,\cdot))\\
:= & 2\langle K(t)x,A(t)x+B(t)u\rangle+2\sum_{i=1}^{d}\langle L^{i}(t)x,C^{i}(t)x+D^{i}(t)u\rangle+\sum_{i=1}^{d}\langle K(t)(C^{i}(t)x+D^{i}(t)u),C^{i}(t)x+D^{i}(t)u\rangle\\
 & +2{\displaystyle \int_{\Lambda}\Big\langle R(t,e)x,E(t,e)x+F(t,e)u\Big\rangle\nu(de)}+{\displaystyle \int_{\Lambda}\Big\langle\big(K(t)+R(t,e)\big)(E(t,e)x+F(t,e)u),E(t,e)x+F(t,e)u\Big\rangle\nu(de).}
\end{align*}

For $\tau\in\mathscr{T}$, $\sigma\in\mathscr{T}_{\tau}$ and $u(\cdot)\in\mathscr{U}_{\tau},$
applying Itô formula to $V(t,X^{\tau,x;u(\cdot)}(t))=\langle K(t)X^{\tau,x;u(\cdot)}(t),X^{\tau,x;u(\cdot)}(t)\rangle$,
we get

\begin{equation}
\begin{split} & V(\sigma,X^{\tau,x;u(\cdot)}(\sigma)))\\
= & V(\tau,x)-\int_{\tau}^{\sigma}\langle dk(t)X,X\rangle+\int_{\tau}^{\sigma}\mathbb{F}(t,X,u(t),K(t-),L(t),R(t,\cdot))dt\\
 & +\sum_{i=1}^{d}\int_{\tau}^{\sigma}\bigg[\langle L^{i}(t)X,X\rangle+2\langle K(t-)X,C^{i}(t)X+D^{i}(t)u(t)\rangle\bigg]dW^{i}(t)\\
 & +\int_{\tau}^{\sigma}\int_{\Lambda}\langle R(t,e)(X+E(t,e)X+F(t,e)u(t)),X+E(t,e)X+F(t,e)u(t)\rangle\tilde{\mu}(dt,de)\\
 & +\int_{\tau}^{\sigma}\int_{\Lambda}\langle K(t-)(E(t,e)X+F(t,e)u(t)),E(t,e)X+F(t,e)u(t)\rangle\tilde{\mu}(dt,de)\\
 & +2\int_{\tau}^{\sigma}\int_{\Lambda}\langle K(t-)X,E(t,e)X+F(t,e)u(t)\rangle\tilde{\mu}(dt,de),
\end{split}
\label{eq:exist-3}
\end{equation}
where $X$ is short for $X^{\tau,x;u(\cdot)}(t-)$. Taking conditional
expectation with $\mathscr{F}_{\tau}$ on both sides of the above
relation and noting the fact that the conditional expectation of the
stochastic integrals w.r.t. the Brownian motion $W$ and the Poisson
random measure $\tilde{\mu}$ vanishes by the localization with the
stopping time, we obtain 
\begin{equation}
\begin{split} & \mathbb{E}^{\mathscr{F}_{\tau}}[V(\sigma,X^{\tau,x;u(\cdot)}(\sigma))]\\
= & V(\tau,x)+\mathbb{E}^{\mathscr{F}_{\tau}}\bigg[\int_{\tau}^{\sigma}\Pi(t,X^{\tau,x;u(\cdot)}(t-))dt\bigg]-\mathbb{E}^{\mathscr{F}_{\tau}}\bigg[\int_{\tau}^{\sigma}f(t,X^{\tau,x;u(\cdot)}(t-),u(t))dt\bigg],
\end{split}
\label{eq:4.10}
\end{equation}
where 
\begin{equation}
\begin{split} & \Pi(dt;\tau,x,u(\cdot))\\
:= & -\langle dk(t)X^{\tau,x;u(\cdot)}(t-),X^{\tau,x;u(\cdot)}(t-)\rangle+\mathbb{F}(t,X^{\tau,x;u(\cdot)}(t-),u(t),K(t-),L(t),R(t,\cdot))dt\\
 & +f(t,X^{\tau,x;u(\cdot)}(t-),u(t))dt.
\end{split}
\label{eq:4.10-1}
\end{equation}
This implies that 
\begin{equation}
\begin{split} & \mathbb{E}^{\mathscr{F}_{\tau}}\bigg[\int_{\tau}^{\sigma}\Pi(dt;\tau,x,u(\cdot))dt\bigg]\\
= & \mathbb{E}^{\mathscr{F}_{\tau}}[V(\sigma,X^{\tau,x;u(\cdot)}(\sigma))]+\mathbb{E}^{\mathscr{F}_{\tau}}\bigg[\int_{\tau}^{\sigma}l(t,X^{\tau,x;u(\cdot)}(t-),u(t))dt\bigg]-V(\tau,x).
\end{split}
\label{eq:4.10}
\end{equation}
From the dynamic programming principle, we have 
\begin{equation}
\begin{split} & \mathop{\text{ess.}\inf}_{u(\cdot)\in\mathscr{U}_{\tau}}\mathbb{E}^{\mathscr{F}_{\tau}}\bigg[\int_{\tau}^{\sigma}\Pi(dt;\tau,x,u(\cdot))dt\bigg]\\
= & \mathop{\text{ess.}\inf}_{u(\cdot)\in\mathscr{U}_{\tau}}\bigg\{\mathbb{E}^{\mathscr{F}_{\tau}}[V(\sigma,X^{\tau,x;u(\cdot)}(\sigma))]+\mathbb{E}^{\mathscr{F}_{\tau}}\bigg[\int_{\tau}^{\sigma}f(t,X^{\tau,x;u(\cdot)}(t-),u(t))dt\bigg]\bigg\}-V(\tau,x)\\
= & V(\tau,x)-V(\tau,x)\\
= & 0.
\end{split}
\label{eq:4.10}
\end{equation}
 Choose $\tau_{k}$ as the $k$-th jump time of the Poisson point
process. This implies that the measure $\Pi(ds;\tau_{k}\wedge T,x,u(\cdot))dxdP$
is nonnegative on $\{(t,x,\omega):t\in(\tau_{k}(\omega)\wedge T,T],x\in\mathbb{R}^{n},\omega\in\Omega\}$
for any $u(\cdot)\in\mathscr{U}_{\tau}.$ Therefore, for any essentially
bounded nonnegative predictable field $\eta$ defined on $[0,T]\times\mathbb{R}^{n}\times\Omega$,
we have 
\[
\begin{split}\mathbb{E}\int_{\tau_{k}\wedge T}^{\tau_{k+1}\wedge T}\int_{\mathbb{R}^{n}}\eta(s,X^{\tau_{k}\wedge T,x;u(\cdot)}(s))\det(\Phi_{k}(s))\Pi(ds;\tau_{k}\wedge T,x,u(\cdot))\geq0,\quad\forall u(\cdot)\in\mathscr{U}_{0}\end{split}
\]
with $\Phi_{k}(s)$ being the Jacobian matrix of flow transformation
$x\longrightarrow X^{\tau_{k}\wedge T,x;u(\cdot)}(s)$ for any $u(\cdot)\in U_{\tau_{k}\wedge T}$.
Note that before the next jump time $\tau_{k+1}$, $\Phi(s)$ is inversible,
i.e., $\det(\Phi(s))>0$ $\mathbb{P}$-a.s. Via a transformation of
state variable $x$, we have

\[
\begin{split}\mathbb{E}\int_{\tau_{k}\wedge T}^{\tau_{k+1}\wedge T}\int_{\mathbb{R}^{n}}\eta(s,x)\Pi(ds;\tau_{k}\wedge T,Y^{\tau_{k}\wedge T,x;u(\cdot)}(s),u(\cdot))\geq0,\quad\forall u(\cdot)\in\mathscr{U}_{0},\end{split}
\]
where $Y^{\tau_{k}\wedge T,x;u(\cdot)}(s)$ is the inverse of the
flow $x\longrightarrow X^{\tau_{k}\wedge T,x;u(\cdot)}(s)$ for $\tau_{k}\wedge T\leq s<\tau_{k+1}\wedge T$.
Incorporating $\Pi(ds;0,\cdot,u(\cdot))\geq0$ with the inverse flow
$Y^{\tau_{k}\wedge T,x;u(\cdot)}(s),x\in\mathbb{R}^{n}$ we have 
\[
\begin{split}0\leq & \Pi(dt;\tau_{k}\wedge T,Y^{\tau_{k}\wedge T,x;u(\cdot)}(t);u(\cdot))\\
= & -\langle dk(t)x,x\rangle+\mathbb{F}(t,x,u(t),K(t-),L(t),R(t,\cdot))dt\\
 & \qquad+f(t,x,u(t))dt
\end{split}
\]
on $\{(t,\omega):t\in(\tau_{k}(\omega)\wedge T,\tau_{k+1}(\omega)\wedge T),\omega\in\Omega\}$.
In a similar way, we have for a.e. a.s. $(t,\omega)\in[0,T]\times\Omega,$

\[
\begin{split}0= & \Pi(dt;\tau_{k}\wedge T,Y^{\tau_{k}\wedge T,x;\bar{u}(\cdot)}(t),\bar{u}(\tau_{k}\wedge T,Y^{\tau_{k}\wedge T,x;\bar{u}(\cdot)}(t)))\\
= & -\langle dk(t)x,x\rangle+\mathbb{F}(t,x,\bar{u}(\tau_{k}\wedge T,Y^{\tau_{k}\wedge T,x;\bar{u}(\cdot)}(t)),K(t-),L(t),R(t,\cdot))dt\\
 & +f(t,x,\bar{u}(\tau_{k}\wedge T,Y^{\tau_{k}\wedge T,x;\bar{u}(\cdot)}(t)))dt.
\end{split}
\]
Therefore, we have 
\[
\begin{split}\langle dk(t)x,x\rangle=\min_{v\in\mathbb{R}^{n}}\big[\mathbb{F}(t,x,v,K(t-),L(t),R(t,\cdot))+f(t,x,v)\big]dt,\quad t\in(\negthickspace(\tau_{k}\wedge T,\tau_{k+1}\wedge T)\negthickspace).\end{split}
\]
Since $k$ is a predictable process, it does not have a jump at the
inaccessible time $\tau_{k}$. Thus $dk$ does not contain singular
measure, in other word, any $t\in[0,T]$, 
\begin{equation}
\langle dk(t)x,x\rangle=\min_{v\in\mathbb{R}^{n}}\big[\mathbb{F}(t,x,v,K(t-),L(t),R(t,\cdot))+f(t,x,v)\big]dt.\label{eq:estima-2}
\end{equation}
In view of assertion (ii) in Proposition \ref{prop:quadr}, the right
hand side of \eqref{eq:estima-2} has a unique minimal point $\bar{u}(t)$,
hence the minmium value is nothing but $G(t,K(t-),L(t),R(t,\cdot))$
and $\mathscr{N}(t,K(t),R(t,\cdot))$ is invertible, which together
with \eqref{eq:3.16} implies that $(K,L,R)$ satisfies \eqref{eq:integ-Riccati}
a.s. \bigskip{}

\textit{Next we prove the BMO martingale property and \eqref{eq:exist-0}}.
Using \eqref{eq:exist-3} for $u(\cdot)=0$ and $X=X^{\tau,x;0}$,
we have 
\begin{equation}
\left\{ \begin{array}{ll}
dV(t,X(t)))= & -\langle dk(t)X(t),X(t)\rangle+\mathbb{F}(t,X(t-),0,K(t-),L(t),R(t,\cdot))dt\\
 & +{\displaystyle \sum_{i=1}^{d}\bigg[\langle L^{i}(t)X(t-),X(t-)\rangle+2\langle K(t-)X(t-),C^{i}(t)X(t-)\rangle\bigg]dW^{i}(s)}\\
 & +{\displaystyle \int_{\Lambda}\langle R(t,e)(X(t-)+E(t,e)X(t-)),X+E(t,e)X(t-)\rangle\tilde{\mu}(de,dt)}\\
 & +{\displaystyle \int_{\Lambda}\langle K(t-)E(t,e)X(t-),E(t,e)X(t-)\rangle\tilde{\mu}(de,dt)}\\
 & +2{\displaystyle \int_{\Lambda}\langle K(t-)X(t-),E(t,e)X(t-))\rangle\tilde{\mu}(de,dt)},\\
V(T,X(T))= & \langle MX(T),X(T)\rangle\text{.}
\end{array}\right.\label{eq:3.1}
\end{equation}
Applying Itô formula to $|V(t,X(t))|^{2}$, we have

\begin{align}
 & \int_{\tau}^{T}\int_{\Lambda}\bigg|\langle R(I+E)X,(I+E)X\rangle+\langle K(2I+E)X,EX\rangle\bigg|^{2}\mu(de,dt)\nonumber \\
 & \quad+\int_{\tau}^{T}\sum_{i=1}^{d}\bigg|\langle L^{i}X,X\rangle+2\langle KX,C^{i}X\rangle\bigg|^{2}dt\nonumber \\
= & |\langle MX(T),X(T)\rangle|^{2}-|\langle K(\tau)x,x\rangle|^{2}+2\int_{\tau}^{T}\langle KX,X\rangle\Big[\langle dkX,X\rangle-\mathbb{F}(t,X,0,K,L,R)dt\Big]\nonumber \\
 & \quad-2\int_{\tau}^{T}\int_{\Lambda}\langle KX,X\rangle\Big[\langle R(I+E)X,(I+E)X\rangle+\langle K(2I+E)X,EX\rangle\bigg]\tilde{\mu}(de,dt)\label{eq:exist-5}\\
 & \quad-2\int_{\tau}^{T}\langle KX,X\rangle{\displaystyle \sum_{i=1}^{d}\bigg[\langle L^{i}X,X\rangle+2\langle KX,C^{i}X\rangle\bigg]dW^{i}(t)},\nonumber 
\end{align}
where $X$ means $X^{\tau,x;0}(t-)$, $K$ means $K(t-)$.

In the following estimates the constant $C$ may change line by line.
Since $V(t,X(t))>0$ and the measure $\Pi(dt;\tau\wedge T,x,u)dxd\mathbb{P}$
(see \eqref{eq:4.10-1}) is nonnegative, we have a.e.
\begin{alignat*}{1}
 & \int_{\tau}^{T}\int_{\Lambda}\bigg|\langle R(I+E)X,(I+E)X\rangle+\langle K(2I+E)X,EX\rangle\bigg|^{2}\mu(de,dt)\\
 & \quad+\int_{\tau}^{T}\sum_{i=1}^{d}\bigg|\langle L^{i}X,X\rangle+2\langle KX,C^{i}X\rangle\bigg|^{2}dt\\
\leq & |M|^{2}|X(T)|^{4}+2\int_{\tau}^{T}\langle KX,X\rangle f(t,X,0)dt\\
 & \quad-2\int_{\tau}^{T}\int_{\Lambda}\langle KX,X\rangle\Big[\langle R(I+E)X,(I+E)X\rangle+\langle K(2I+E)X,EX\rangle\bigg]\tilde{\mu}(de,dt)\\
 & \quad-2\int_{\tau}^{T}\langle KX,X\rangle{\displaystyle \sum_{i=1}^{d}\bigg[\langle L^{i}X,X\rangle+2\langle KX,C^{i}X\rangle\bigg]dW^{i}(t)}.
\end{alignat*}

Thanks to inequality $\frac{1}{2}a^{2}-b^{2}\leq(a+b)^{2}$, and the
boundness of $K$ , we have 

\begin{align}
 & \int_{\tau}^{T}\int_{\Lambda}\bigg|\langle R(I+E)X,(I+E)X\rangle\bigg|^{2}\mu(de,dt)+\int_{\tau}^{T}\sum_{i=1}^{d}\bigg|\langle L^{i}X,X\rangle\bigg|^{2}dt\nonumber \\
\leq & 2\int_{\tau}^{T}\int_{\Lambda}\bigg|\langle K(2I+E)X,EX\rangle\bigg|^{2}\mu(de,dt)+2\int_{\tau}^{T}\sum_{i=1}^{d}\bigg|2\langle KX,C^{i}X\rangle\bigg|^{2}dt\nonumber \\
 & +2|M|^{2}|X|^{4}+4\int_{\tau}^{T}\langle KX,X\rangle f(t,X,0)dt\nonumber \\
 & -4\int_{\tau}^{T}\langle KX,X\rangle\sum_{i=1}^{d}\bigg[\langle L^{i}X,X\rangle+2\langle KX,C^{i}X\rangle\bigg]dW^{i}(t)\nonumber \\
 & -4\int_{\tau}^{T}\int_{\Lambda}\langle KX,X\rangle\Big[\langle R(I+E)X,(I+E)X\rangle+\langle K(2I+EX,EX\rangle\bigg]\tilde{\mu}(de,dt)\label{eq:exist-6-0}\\
\leq & C\sup_{\tau\leq t\leq T}|X|^{4}+\Big|4\int_{\tau}^{T}\langle KX,X\rangle\sum_{i=1}^{d}\langle L^{i}X,X\rangle d^{i}W(t)\Big|\nonumber \\
 & +8\Big|\int_{\tau}^{T}\langle KX,X\rangle\sum_{i=1}^{d}\langle KX,C^{i}X\rangle dW^{i}(t)\Big|\nonumber \\
 & +4\Big|\int_{\tau}^{T}\int_{\Lambda}\langle KX,X\rangle\langle R(I+E)X,(I+E)X\rangle\tilde{\mu}(de,dt)\Big|\nonumber \\
 & +4\Big|\int_{\tau}^{T}\int_{\Lambda}\langle KX,X\rangle\langle K(2I+E)X,E)X)\rangle\tilde{\mu}(de,dt)\Big|.\nonumber 
\end{align}
 This means that 
\begin{align}
 & \mathbb{E}^{\mathscr{F}_{\tau}}\bigg[\int_{\tau}^{T}\int_{\Lambda}\bigg|\langle R(I+E)X,(I+E)X\rangle\bigg|^{2}\mu(de,dt)\bigg]+\mathbb{E}^{\mathscr{F}_{\tau}}\bigg[\int_{\tau}^{T}\sum_{i=1}^{d}\bigg|\langle L^{i}X,X\rangle\bigg|^{2}dt\bigg]\nonumber \\
\leq & C\mathbb{E}^{\mathscr{F}_{\tau}}\bigg[\sup_{\tau\leq t\leq T}|X(t)|^{4}\bigg]+C_{p}\mathbb{E}^{\mathscr{F}_{\tau}}\bigg[\bigg|\int_{\tau}^{T}\langle KX,X\rangle{\displaystyle \sum_{i=1}^{d}\langle L^{i}X,X\rangle dW^{i}(t)\bigg|\bigg]}\label{eq:exist-6}\\
 & +C\mathbb{E}^{\mathscr{F}_{\tau}}\bigg[\bigg|\int_{\tau}^{T}\langle KX,X\rangle{\displaystyle \sum_{i=1}^{d}\langle KX,C^{i}X\rangle dW^{i}(t)\bigg|\bigg]}\nonumber \\
 & +C\mathbb{E}^{\mathscr{F}_{\tau}}\bigg[\bigg|\int_{\tau}^{\tau}\int_{\Lambda}\langle KX,X\rangle\langle R(I+E)X,(I+E)X\rangle\tilde{\mu}(de,dt)\bigg|\bigg]\nonumber \\
 & +C\mathbb{E}^{\mathscr{F}_{\tau}}\bigg[\bigg|\int_{\tau}^{T}\int_{\Lambda}\langle KX,X\rangle\langle K(I+E)X,EX\rangle\tilde{\mu}(de,dt)\bigg|\bigg].\nonumber 
\end{align}
Using BDG inequality, H\"older inequality, boundness of $K$ and
the estimation Lemma \ref{lem:3.1}, we have the following estimation
about the every terms in right hand side of \eqref{eq:exist-6-0},
\begin{equation}
\begin{split} & \mathbb{E}^{\mathscr{F}_{\tau}}\bigg[\bigg|\int_{\tau}^{T}\langle KX,X\rangle\sum_{i=1}^{d}\langle L^{i}X,X\rangle dW(t)\bigg|\bigg]\\
\leq & \mathbb{E}^{\mathscr{F}_{\tau}}\bigg[\bigg|\int_{\tau}^{T}\bigg|\langle KX,X\rangle{\displaystyle \sum_{i=1}^{d}\langle L^{i}X,X\rangle\bigg|^{2}dt\bigg|^{\frac{1}{2}}\bigg]}\\
\leq C & \mathbb{E}^{\mathscr{F}_{\tau}}\bigg[\sup_{\tau\leq t\leq T}|X|^{2}\bigg(\int_{\tau}^{T}{\displaystyle \sum_{i=1}^{d}\bigg|\langle L^{i}X,X\rangle\bigg|^{2}dt\bigg)^{\frac{1}{2}}\bigg]}\\
\leq\frac{C}{\varepsilon} & \mathbb{E}^{\mathscr{F}_{\tau}}\bigg[\sup_{\tau\leq T\leq T}|X|^{4}\bigg]+\varepsilon\mathbb{E}^{\mathscr{F}_{\tau}}\bigg[\bigg(\int_{\tau}^{T}\sum_{i=1}^{d}\Big|\langle L^{i}X,X\rangle\Big|^{2}dt\bigg)\bigg]\\
\leq & \frac{C}{\varepsilon}|x|^{4}+\varepsilon\mathbb{E}^{\mathscr{F}_{\tau}}\bigg[\int_{\tau}^{T}{\displaystyle \sum_{i=1}^{d}\Big|\langle L^{i}X,X\rangle\Big|^{2}dt\bigg]},
\end{split}
\label{eq:9.7}
\end{equation}

\begin{equation}
\begin{split} & \mathbb{E}^{\mathscr{F}_{\tau}}\bigg[\bigg|\int_{\tau}^{T}\langle KX,X\rangle{\displaystyle \sum_{i=1}^{d}\langle KX,C^{i}X\rangle dW^{i}(t)\bigg|\bigg]}\\
\leq & \mathbb{E}^{\mathscr{F}_{\tau}}\bigg[\bigg(\int_{\tau}^{T}\bigg|\langle KX,X\rangle{\displaystyle \sum_{i=1}^{d}\langle KX,C^{i}X\rangle\bigg|^{2}dt\bigg)^{\frac{1}{2}}\bigg]}\\
\leq & C_{p}E^{\mathscr{F}_{\tau}}\bigg[\sup_{\tau\leq t\leq T}|X|^{4}\bigg]\\
\leq & C_{p}|x|^{4},
\end{split}
\label{eq:9.6}
\end{equation}
\begin{equation}
\begin{split} & \mathbb{E}^{\mathscr{F}_{\tau}}\bigg[\bigg|\int_{\tau}^{T}\int_{\Lambda}\langle KX,X\rangle\langle K(2I+EX,EX)\rangle\tilde{\mu}(de,dt)\bigg|\bigg]\\
\leq & \mathbb{E}^{\mathscr{F}_{\tau}}\bigg[\Big(\int_{\tau}^{T}\int_{\Lambda}\bigg|\langle KX,X\rangle\langle K(2I+E)X,EX\rangle\bigg|^{2}\mu(de,dt)\Big)^{\frac{1}{2}}\bigg]\\
\leq & C\mathbb{E}^{\mathscr{F}_{\tau}}\bigg[\sup_{\tau\leq t\leq T}|X|^{4}\bigg]\\
\leq & C|x|^{4},
\end{split}
\label{eq:9.8}
\end{equation}

\begin{equation}
\begin{split} & \mathbb{E}^{\mathscr{F}_{\tau}}\bigg[\bigg|\int_{\tau}^{T}\int_{\Lambda}\langle KX,X\rangle\langle R(I+E)X,(I+E)X\rangle\tilde{\mu}(de,dt)\bigg|\bigg]\\
\leq & C\mathbb{E}^{\mathscr{F}_{\tau}}\bigg[\bigg\{\int_{\tau}^{T}\int_{\Lambda}\bigg|\langle KX,X\rangle\langle R(I+E)X,(I+E)X\rangle\bigg|^{2}\mu(de,dt)\bigg\}^{\frac{1}{2}}\bigg]\\
\leq & C\mathbb{E}^{\mathscr{F}_{\tau}}\bigg[\sup_{\tau\leq t\leq T}|X|^{2}\bigg\{\int_{\tau}^{T}\int_{\Lambda}\bigg|\langle R(I+E)X,(I+E)X\rangle\bigg|^{2}\mu(de,dt)\bigg\}^{\frac{1}{2}}\bigg]\\
\leq & \frac{C}{\varepsilon}\mathbb{E}^{\mathscr{F}_{\tau}}\bigg[\sup_{\tau\leq t\leq T}|X|^{4}\bigg]+\varepsilon\mathbb{E}^{\mathscr{F}_{\tau}}\bigg[\int_{\tau}^{T}\int_{\Lambda}\bigg|\langle R(I+E)X,(I+E)X\rangle\bigg|^{2}\mu(de,dt)\bigg]\\
\leq & \frac{C}{\varepsilon}|x|^{4}+\varepsilon\mathbb{E}^{\mathscr{F}_{\tau}}\bigg[\int_{\tau}^{T}\int_{\Lambda}\bigg|\langle R(I+E)X,(I+E)X\rangle\bigg|^{2}\mu(de,dt)\bigg].
\end{split}
\label{eq:9.9}
\end{equation}
Taking conditional expectation on both sides of \eqref{eq:exist-6-0},
putting \eqref{eq:9.6}-\eqref{eq:9.9} into it, and then letting
$\varepsilon=1/4$, we get

\[
\mathbb{E}^{\mathscr{F}_{\tau}}\bigg[\int_{\tau}^{T}\int_{\Lambda}\bigg|\langle R(I+E)X,(I+E)X\rangle\bigg|^{2}\mu(de,dt)\bigg]+\mathbb{E}^{\mathscr{F}_{\tau}}\bigg[\int_{\tau}^{T}\sum_{i=1}^{d}\bigg|\langle L^{i}X,X\rangle\bigg|^{2}dt\bigg]\leq C|x|^{4},
\]
 the constant $C$ is independent of $\tau$ and $x$. Then we have
\begin{equation}
\mathbb{E}^{\mathscr{F}_{\tau}}\bigg[\int_{\tau}^{T}\int_{E}\bigg|\Phi^{*}(I+E)^{*}R(I+E)\Phi\bigg|^{2}\mu(de,dt)\bigg]+\mathbb{E}^{\mathscr{F}_{\tau}}\bigg[\int_{\tau}^{T}\sum_{i=1}^{d}\bigg|\Phi L^{i}\Phi\bigg|^{2}dt\bigg]\leq C,\label{eq:exist-11}
\end{equation}
 where $\Phi$ is the solution of matrix equation \eqref{eq:sm-3-0}
on $[\![\tau\wedge T,T]\!]$ with initial data $\Phi(\tau\wedge T)=I$.

Recall that $\tau_{k}$ as the $k$-th jump time of the Poisson point
process. For any stopping time $\gamma\leq T$, denote by $\hat{\tau}_{k}:=\gamma\vee\tau_{k}$
the $n$-th jump time after the stopping time $\gamma$. Applying
\eqref{eq:exist-11} for $\tau=\hat{\tau}_{k}\wedge T$ and noting
that $\Phi$ is inversible on time $[\![\hat{\tau}_{k}\wedge T,\hat{\tau}_{k+1}\wedge T)\negmedspace)$
and $\mathbb{E}^{\mathscr{F}_{\hat{\tau}_{k}\wedge T}}\bigg[\sup_{t\in[\![\hat{\tau}_{k}\wedge T,\hat{\tau}_{k+1}\wedge T)\negthickspace)}\Phi^{-4}(t)\bigg]$
is bounded by a constant only depending on the bound of the coefficients
and $T$ (see \eqref{eq:sm-3} for details), we see that for any $k\geq1$,
\begin{align}
 & \mathbb{E}^{\mathscr{F}_{\hat{\tau}_{k}\wedge T}}\bigg[\int_{\hat{\tau}_{k}\wedge T}^{\hat{\tau}_{k+1}\wedge T}\sum_{i}|L^{i}|^{2}dt\bigg]\nonumber \\
\leq & \mathbb{E}^{\mathscr{F}_{\hat{\tau}_{k}\wedge T}}\bigg[\int_{\hat{\tau}_{k}\wedge T}^{\hat{\tau}_{k+1}\wedge T}\sum_{i}|(\Phi^{*})^{-1}\Phi^{*}L^{i}\Phi\Phi^{-1}|^{2}dt\bigg]\nonumber \\
\leq & \mathbb{E}^{\mathscr{F}_{\hat{\tau}_{k}\wedge T}}\bigg[\sup_{t\in[\![\hat{\tau}_{k}\wedge T,\hat{\tau}_{k+1}\wedge T)\negthickspace)}|\Phi^{-1}(t)|^{2}\int_{\hat{\tau}_{k}\wedge T}^{\hat{\tau}_{k+1}\wedge T}\sum_{i}|\Phi L^{i}\Phi|^{2}dt\bigg]\label{eq:exist-11+1}\\
\leq & \bigg\{\mathbb{E}^{\mathscr{F}_{\hat{\tau}_{k}\wedge T}}\sup_{t\in[\![\hat{\tau}_{k}\wedge T,\hat{\tau}_{k+1}\wedge T)\negthickspace)}|\Phi^{-1}(t)|^{4}\bigg\}^{\frac{1}{2}}\bigg\{\mathbb{E}^{\mathscr{F}_{\hat{\tau}_{k}\wedge T}}\bigg[\int_{\hat{\tau}_{k}\wedge T}^{\hat{\tau}_{k+1}\wedge T}\sum_{i}|\Phi^{*}L^{i}\Phi|^{2}dt\bigg]\bigg\}^{\frac{1}{2}}\nonumber \\
\leq & C.\nonumber 
\end{align}
Similarly, we have 
\begin{equation}
\mathbb{E}^{\mathscr{F}_{\gamma}}\bigg[\int_{\gamma\wedge T}^{\hat{\tau}_{1}\wedge T}\sum_{i}|L^{i}|^{2}dt\bigg]\le C.\label{eq:exist-11+2}
\end{equation}
Then, using estimates \eqref{eq:exist-11+1} and \eqref{eq:exist-11+2},
we have
\[
\begin{split} & \mathbb{E}^{\mathscr{F}_{\gamma}}\bigg[\int_{\gamma}^{T}\sum_{i}|L^{i}|^{2}dt\bigg]\\
= & \mathbb{E}^{\mathscr{F}_{\gamma}}\bigg[(\int_{\gamma}^{\hat{\tau}_{1}\wedge T}+\sum_{n=1}^{\infty}\int_{\hat{\tau}_{n}\wedge T}^{\hat{\tau}_{n+1}\wedge T})\sum_{i}|L^{i}|^{2}dt\bigg]\\
= & \mathbb{E}^{\mathscr{F}_{\gamma}}\Big[\int_{\gamma}^{\hat{\tau}_{1}\wedge T}\sum_{i}|L^{i}|^{2}dt\Big]+\sum_{n=1}^{\infty}\mathbb{E}^{\mathscr{F}_{\gamma}}\Big[\int_{\hat{\tau}_{n}\wedge T}^{\hat{\tau}_{n+1}\wedge T}\sum_{i}|L^{i}|^{2}dt\Big]\\
= & \mathbb{E}^{\mathscr{F}_{\gamma}}\bigg[\chi_{\{\gamma<T\}}\mathbb{E}^{\mathscr{F}_{\gamma}}\Big[\int_{\gamma}^{\hat{\tau}_{1}\wedge T}\sum_{i}|L^{i}|^{2}dt\Big]+\sum_{n=1}^{\infty}\chi_{\{\hat{\tau}_{n}<T\}}\mathbb{E}^{\mathscr{F}_{\hat{\tau}_{n}\wedge T}}\Big[\int_{\hat{\tau}_{n}\wedge T}^{\hat{\tau}_{n+1}\wedge T}\sum_{i}|L^{i}|^{2}dt\Big]\bigg]\\
\leq & \mathbb{E}^{\mathscr{F}_{\gamma}}\bigg[\Big(\chi_{\{\gamma<T\}}+\sum_{n=1}^{\infty}\chi_{\{\hat{\tau}_{n}<T\}}\Big)C\bigg]\\
= & C\mathbb{E}^{\mathscr{F}_{\gamma}}\Big[\mu\big([\![\gamma,T]\!]\times\Lambda\big)\Big].
\end{split}
\]
 In view of the independent increment property of the Poisson point
process $\{p_{t}\}_{t\geq0}$, $\mu\big([\![\gamma,T]\!]\times\Lambda\big)$
is independent of $\mathscr{F}_{\gamma}$. So we have $\mathbb{E}^{\mathscr{F}_{\gamma}}\Big[\mu\big([\![\gamma,T]\!]\times\Lambda)\big)\Big]=\mathbb{E}\Big[\mu\big([\![\gamma,T]\!]\times\Lambda\big)\Big]\leq\mathbb{E}\Big[\mu\big([0,T]\times\Lambda\big)\Big]=T\nu(\Lambda)$.
Hence we obtain that for any stopping time $\gamma$ valued in $[0,T]$,
\begin{equation}
\mathbb{E}^{\mathscr{F}_{\gamma}}\Big[\Big|\sum_{i}\int_{\gamma}^{T}L^{i}dW^{i}(t)\Big|^{2}\Big]\leq\mathbb{E}^{\mathscr{F}_{\gamma}}\bigg[\int_{\gamma}^{T}\sum_{i}|L^{i}|^{2}dt\bigg]\leq C,\label{eq:exist-12-1}
\end{equation}
which means $\int_{0}^{\cdot}L^{i}(s)dW_{s}^{i}$ is a BMO martingale,
$i=1,\ldots,d$.

For $\eta_{t}:=\int_{0}^{t}\int_{\Lambda}R(e,t)\tilde{\mu}(de,dt)$,
we see that it is a purely continuous martingale whose jumps coincide
with those of $K$. Since $K$ is uniformly bounded by some constant
$\lambda$, jumps of $\eta$ is also uniformly bounded by $2\lambda$.
Hence we have 
\begin{align*}
[\eta]_{T}-[\eta]_{\gamma-}= & \sum_{\gamma\leq s\le T}|\Delta\eta_{s}|^{2}=\sum_{\gamma\leq\tau_{i}\leq T}|\Delta\eta_{s}|^{2}\\
\leq & 4\lambda^{2}\mu([\![\gamma,T]\!]\times\Lambda).
\end{align*}
 Thus 
\begin{equation}
\mathbb{E}^{\mathscr{F}_{\gamma}}\bigg[[\eta]_{T}-[\eta]_{\gamma-}\bigg]\le4\lambda^{2}\mathbb{E}^{\mathscr{F}_{\gamma}}\bigg[\mu([\![\gamma,T]\!]\times\Lambda)\bigg]\le CT\nu(\Lambda)<\infty,\label{eq:exist-14}
\end{equation}
 which means that $J$ is also a BMO martingale. 

Let $\gamma=0$ in \eqref{eq:exist-12-1} and \eqref{eq:exist-14},
we have

\begin{equation}
\mathbb{E}\bigg[\int_{0}^{T}\sum_{i}|L^{i}|^{2}dt\bigg]\le C,\label{eq:exist-13}
\end{equation}
and
\begin{equation}
\mathbb{E}\bigg[\int_{0}^{T}\int_{\Lambda}R^{2}\nu(de)dt\bigg]=\mathbb{E}[\eta]_{T}\leq C,\label{eq:exist-15}
\end{equation}
 we have estimate \eqref{eq:exist-0}. 

\bigskip{}

\textit{Last we show the nonnegativity of $\int_{\Lambda}F^{*}(t,e)(K(t-)+R(t,e))F(t,e)\nu(de)$}.
First note that the pure jump process $\zeta_{t}:=\int_{0}^{t}\int_{\Lambda}F^{*}(s,e)(K(t-)+R(s,e))F(s,e)\mu(de,ds)$
only changes its value at the jumping time of Poisson process and
$\Delta\zeta_{t}=\int_{\Lambda}F^{*}(s,e)(K(s-)+R(s,e))F(s,e)\mu(de,\{t\})$.
Since at the jumping moment $R(s,p_{s})$ is equivalent to $K(s)-K(s-)$,
it is easy to know that $K(s-)+R(s,e)=K(s)$ is nonnegative definite
(here $e$ is the jumpping amplitude at the moment), therefore for
any $y\in{\mathcal{M}}_{{\cal \mathscr{F}}}^{\infty}(0,T;\mathbb{R}^{m})$,
we have 
\[
\int_{0}^{T}\int_{\Lambda}y^{*}(s)F^{*}(s,e)(K(s-)+R(s,e))F(s,e)y(s)\mu(de,ds)\geq0,\quad P{\rm -a.e.}
\]
In view of the martingale property, 

\[
\mathbb{E}\big[\int_{0}^{T}\int_{\Lambda}y^{*}(s)F^{*}(s,e)(K(s-)+R(s,e))F(s,e)y(s)\tilde{\mu}(ds,de)\big]=0.
\]
Hence 
\[
\begin{split} & \mathbb{E}\big[\int_{0}^{T}\int_{\Lambda}y^{*}(s)F^{*}(s,e)(K(s-)+R(s,e))F(s,e)y(s)\nu(de)ds\big]\\
= & \mathbb{E}\Big[\int_{0}^{T}\int_{\Lambda}y^{*}(s)F^{*}(s,e)(K(s-)+R(s,e))F(s,e)y(s)[\mu(de,ds)-\tilde{\mu}(de,ds)]\Big]\ge0.
\end{split}
\]
By the arbitrariness of $y$, we have $\int_{\Lambda}F^{*}(t,e)(K(t-)+R(t,e))F(t,e)\nu(de)$
is nonnegative for almost all $t$, $\mathbb{P}$-a.s. $\omega$.
Thus, the proof is complete.

\end{proof}

\begin{rmk}

If we have the condition \eqref{eq:addi-cond} in hand, \eqref{eq:exist-15}
could be obtained from \eqref{eq:exist-11} directly like the way
of \eqref{eq:exist-11+1}-\eqref{eq:exist-12-1}. In our case, observing
the structure of BSREJ and utilizing the relationship between the
jump of $K$ and $R$, we can prove \eqref{eq:exist-15} by the estimate
of $K$, and this way seemed to be easier.\end{rmk}

\section{\label{sec:Verification}Verification theorem}

In section \ref{sec:Exist}, we exploit Problem \ref{pro:4.1} and
the dynamic programming principle to show the existence of solution
for BSREJ \eqref{eq:Riccati}. In this section we will deal with the
problem from an inverse aspect -- if the BSREJ \eqref{eq:Riccati}
has a solution, how to describe the corresponding optimal control
problem? The following Theorem \ref{thm:verific} tells us that the
existence of solution for BSREJ \eqref{eq:Riccati} means the existence
of the optimal control for problem \eqref{eq:b8}. Besides, the optimal
control could be depicted as a linear feedback by the solution of
BSREJ \eqref{eq:Riccati}.

\begin{thm} \label{thm:verific}Let Assumptions \ref{ass:4.1} be
satisfied. And assume BSREJ \eqref{eq:Riccati} has a solution $(K,L,R)$
in the meaning of Definition \ref{def:solution}. Then the linear
SDE 
\begin{equation}
\left\{ \begin{array}{l}
d\bar{X}^{t,x}(s)=[A(s)-B(s)\mathscr{N}^{-1}(s,K(s-),R(s,\cdot))\mathscr{M}^{*}(s,K(s-),L(s),R(s,\cdot))]\bar{X}^{t,x}(s-)ds\\
\quad+\sum_{i=1}^{d}[C^{i}(s)-D^{i}(s)\mathscr{N}^{-1}(s,K(s-),R(s,\cdot))\mathscr{M}^{*}(s,K(s-),L(s),R(s,\cdot))]\bar{X}^{t,x}(s-)dW^{i}(s)\\
\quad+\int_{\Lambda}[E(s,e)-F(s,e)\mathscr{N}^{-1}(s,K(s-),R(s,\cdot))\mathscr{M}^{*}(s,K(s-),L(s),R(s,\cdot))]\bar{X}^{t,x}(s-)\tilde{\mu}(ds,de),\\
\bar{X}(t)=x,\qquad s\in[t,T]
\end{array}\right.\label{SDE: optimal feedback}
\end{equation}
has a unique solution $\bar{X}^{t,x}(\cdot)$ such that 
\begin{equation}
\mathbb{E}^{\mathscr{F}_{t}}\bigg[\sup_{s\in[t,T]}|\bar{X}^{t,x}(s)|^{2}\bigg]<C_{x},\label{integrable}
\end{equation}
where the constant $C_{x}$ is independent of initial time $t$.\\
(ii) The given process

\begin{equation}
\bar{u}^{t,x}(s):=-\mathscr{N}^{-1}(s,K(s-),R(s,\cdot))\mathscr{M}^{*}(s,K(s-),L(s),R(s,\cdot))\bar{X}(s-),\quad s\in[t,T]\label{eq:feedBackCont}
\end{equation}
belongs to ${\cal M}_{\mathscr{F}}^{2}(t,T;\mathbb{R}^{m})$, and
is the optimal control for the problem \eqref{eq:3.3} for the initial
data $(\tau,\xi)=(t,x)$.\\
 (iii) The value field $V$ is given by

\begin{equation}
V(t,x)=\langle K(t)x,x\rangle,(t,x)\in[0,T]\times\mathbb{R}^{n}.\label{eq:V-K}
\end{equation}

\end{thm}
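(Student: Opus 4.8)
The plan is to run the classical completion-of-squares verification argument; its algebraic core is already built into the definition of the generator $G$. For fixed $(t,x)$ the map $v\mapsto\mathbb{F}(t,x,v,K(t-),L(t),R(t,\cdot))+f(t,x,v)$ (with $\mathbb{F}$ as in the existence proof) is a quadratic function of $v$ whose Hessian is precisely $\mathscr{N}(t,K(t-),R(t,\cdot))$ and whose unique minimizer is the feedback value $\bar v=-\mathscr{N}^{-1}\mathscr{M}^{*}x$ of \eqref{eq:feedBackCont}. Completing the square gives
\[
\mathbb{F}(t,x,v,K,L,R)+f(t,x,v)=\big\langle\mathscr{N}(v-\bar v),\,v-\bar v\big\rangle+\big\langle G(t,K,L,R)x,\,x\big\rangle,
\]
so that $\langle Gx,x\rangle=\min_{v}[\mathbb{F}+f]$, exactly the identity used in the existence proof. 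Combined with the BSREJ \eqref{eq:integ-Riccati} and $K(T)=M$, this reduces parts (ii) and (iii) to a single application of It\^o's formula; the genuine work is in part (i).

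For (i) I would first secure the uniform lower bound $\mathscr{N}(t,K(t-),R(t,\cdot))\ge\delta I$, which makes $\mathscr{N}^{-1}$ bounded; this rests on $N\ge\delta I$ (Assumption \ref{ass:4.1}) together with the nonnegativity of the remaining two summands of $\mathscr{N}$, namely $\sum_{i}(D^{i})^{*}K(t-)D^{i}$ and the jump-integral term, the latter being controlled through the identity $K(t-)+R(t,\Delta p_{t})=K(t)$ as in the existence proof. The feedback gain $\Theta:=\mathscr{N}^{-1}\mathscr{M}^{*}$ then inherits the square-integrability of $(L,R)$, i.e.\ $\int_{t}^{T}|\Theta(s)|^{2}ds<\infty$. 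The closed-loop system \eqref{SDE: optimal feedback} is thus a linear SDE whose coefficients $A-B\Theta$, $C^{i}-D^{i}\Theta$, $E-F\Theta$ are only square-integrable in time; I would solve it on each stochastic subinterval $(\!(\tau_{k},\tau_{k+1})\!)$ between consecutive jump times, where the trajectory is continuous and the equation is a (matrix) stochastic exponential, and then patch the pieces across the locally finite jumps using the semimartingale structure, in the spirit of Theorem \ref{thm:DM-decomposition}. The a priori bound \eqref{integrable}, uniform in $t$, then comes from a Gr\"onwall/energy estimate driven by the BMO bound on $\int_{0}^{\cdot}L(s)\,dW_{s}+\int_{0}^{\cdot}\int_{\Lambda}R(s,e)\,\tilde\mu(de,ds)$ recorded in \eqref{eq:exist-0}.

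With $\bar X$ constructed, the admissibility $\bar u=-\Theta\bar X\in\mathcal{M}_{\mathscr{F}}^{2}(t,T;\mathbb{R}^{m})$ in (ii) is immediate from \eqref{integrable}, the boundedness of $\mathscr{N}^{-1}$, and the square-integrability of $\mathscr{M}^{*}$. For optimality and the value identity I would take an arbitrary $u\in\mathscr{U}_{t}$ with state $X=X^{t,x;u}$ and apply It\^o's formula to $s\mapsto\langle K(s)X(s),X(s)\rangle$ on $[t,T]$, reusing verbatim the expansion \eqref{eq:exist-3}. Taking $\mathbb{E}^{\mathscr{F}_{t}}$ annihilates the $W$- and $\tilde\mu$-integrals (justified by localization together with the BMO/BDG estimates of \eqref{eq:9.7}--\eqref{eq:9.9}); substituting $\langle dk(s)X,X\rangle=\langle GX,X\rangle\,ds$ and the completion-of-squares identity yields
\[
J(u;t,x)=\langle K(t)x,x\rangle+\mathbb{E}^{\mathscr{F}_{t}}\!\int_{t}^{T}\big\langle\mathscr{N}(s,K(s-),R(s,\cdot))(u(s)-\bar u(s)),\,u(s)-\bar u(s)\big\rangle\,ds.
\]
As $\mathscr{N}>0$, the last term is nonnegative and vanishes if and only if $u=\bar u$ a.e., which at once gives that $\bar u$ is the unique optimal control and that the value field satisfies $V(t,x)=\inf_{u}J(u;t,x)=\langle K(t)x,x\rangle$, i.e.\ (iii).

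The main obstacle lies in part (i). Because $\Theta$ is only square-integrable in time and not bounded, \eqref{SDE: optimal feedback} is a linear SDE with BMO-type rather than Lipschitz coefficients, so the existence of $\bar X$ and the uniform bound \eqref{integrable} rely on the BMO norm of the martingale part of $K$ rather than on any crude estimate. Two points need care: securing $\mathscr{N}\ge\delta I$ presupposes the nonnegativity of $K$ and of the jump-integral term, which is not built into Definition \ref{def:solution} and must be drawn from the structure of the equation; and, since that definition only guarantees \emph{pathwise} square-integrability of $(L,R)$, the BMO/energy bounds used above must be re-derived for the solution at hand by reproducing the a priori estimates of the existence theorem. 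Once these integrability facts and the true martingale property of the It\^o integrals are in place, the quadratic algebra, the sign of $\mathscr{N}$, and the final comparison are routine.
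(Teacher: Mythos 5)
Your completion-of-squares algebra for parts (ii) and (iii) matches the paper's argument exactly: It\^o's formula applied to $\langle K(s)X(s),X(s)\rangle$, the identity $\langle G(s,K,L,R)x,x\rangle=\min_{v}\big[\mathbb{F}(s,x,v,K,L,R)+f(s,x,v)\big]$ with minimizer given by the feedback \eqref{eq:feedBackCont}, localization by stopping times, and passage to the limit by uniform integrability. The genuine gap is in part (i), where your route does not close.

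You propose to first establish $\mathscr{N}\ge\delta I$ uniformly (hence $\mathscr{N}^{-1}$ bounded), deduce square-integrability of the gain $\Theta=\mathscr{N}^{-1}\mathscr{M}^{*}$, and then obtain \eqref{integrable} by a Gr\"onwall/energy estimate driven by the BMO bound \eqref{eq:exist-0}. But \eqref{eq:exist-0}, the nonnegativity of $K$, and the nonnegativity of $\int_{\Lambda}F^{*}(K(t-)+R(t,e))F\,\nu(de)$ are properties proved only for the particular solution constructed in the existence theorem, where $K$ is the value process coming from the dynamic programming principle; their proofs use precisely that identification (positivity of $V$, nonnegativity of the measure $\Pi$, boundedness of $\mathscr{K}$ from Proposition \ref{prop:quadr}). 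In the verification theorem you are handed an \emph{arbitrary} solution in the sense of Definition \ref{def:solution}, which guarantees only pathwise square integrability of $(L,R)$ and pointwise (not uniform) positive definiteness of $\mathscr{N}$. Your suggested repair --- re-deriving the BMO/energy bounds ``by reproducing the a priori estimates of the existence theorem'' --- is circular: those estimates presuppose that $\langle K(t)x,x\rangle$ is the value function, which is exactly what the verification theorem is meant to prove. Note also that the identity $K(t-)+R(t,\Delta p_{t})=K(t)$ controls $R$ only at the realized jump amplitude, not for $\nu$-a.e.\ $e\in\Lambda$, so it alone cannot give the sign of the $\nu$-integral term in $\mathscr{N}$. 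The paper sidesteps all of this by reversing the order of the argument: it localizes with $\gamma_{j}^{t,x}$, applies It\^o's formula along the closed-loop trajectory to get \eqref{eq:verif-3}, and then uses only the positivity and boundedness of $K$ together with $N\ge\delta I$ to conclude $\mathbb{E}^{\mathscr{F}_{t}}\int_{t}^{\gamma_{j}^{t,x}}|\bar u^{t,x}(s)|^{2}ds\le\frac{\lambda}{\delta}|x|^{2}$; Fatou's lemma then gives $\bar u^{t,x}\in\mathcal{M}_{\mathscr{F}}^{2}$, and \eqref{integrable} follows from the standard linear-SDE estimate of Lemma \ref{lem:3.1}, treating $B\bar u$, $D^{i}\bar u$, $F\bar u$ as exogenous square-integrable inputs. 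No bound on $\mathscr{N}^{-1}$, no BMO property, and no piecewise construction between jump times is needed. (To be fair, the paper's own proof also quietly invokes positivity and boundedness of $K$, which Definition \ref{def:solution} does not supply either; but your route needs strictly more --- uniform positivity of $\mathscr{N}$ and BMO norms --- and the mechanism you propose for obtaining them cannot work.)
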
 

\begin{proof} Since the coefficients of the optimal SDE (\ref{SDE: optimal feedback})
are square integrable w.r.t. $t$ a.s., it admits a unique strong
solution $\bar{X}(\cdot)$. For a sufficiently large integer $j$,
define the stopping time $\gamma_{j}$ as follows: 
\[
\gamma_{j}^{t,x}:=T\wedge\inf\{s\ge t||\bar{X}^{t,x}(s)|\ge j\}
\]
with the convention that $\inf\emptyset=\infty$. It is obvious that
$\gamma_{j}^{t,x}\uparrow T$ almost surely as $j\uparrow\infty$.
Then by It\^o formula we have 
\begin{equation}
\langle K(t)x,x\rangle=\mathbb{E}^{\mathscr{F}_{t}}\bigg[\langle K(\gamma_{j}^{t,x})\bar{X}^{t,x}(\gamma_{j}^{t,x}),\bar{X}^{t,x}(\gamma_{j}^{t,x})\rangle+\int_{t}^{\gamma_{j}^{t,x}}f(s,\bar{X}^{t,x}(s),\bar{u}^{t,x}(s))dt\bigg].\label{eq:verif-3}
\end{equation}
Noting that $K$ is positive and bounded by $\lambda$, and $N>\delta I$
for some constant $\delta$ (see Assumption \ref{ass:4.1}), \eqref{eq:verif-3}
implies 
\[
\mathbb{E}^{\mathscr{F}_{t}}\bigg[\int_{t}^{\gamma_{j}^{t,x}}\big(\bar{u}^{t,x}\big)^{2}(s)ds\bigg]\le\frac{1}{\delta}\mathbb{E}^{\mathscr{F}_{t}}\bigg[\int_{t}^{\gamma_{j}^{t,x}}f(s,\bar{X}^{t,x}(s),\bar{u}^{t,x}(s))ds\bigg]\le\frac{1}{\delta}\langle K(t)x,x\rangle\leq\frac{\lambda}{\delta}|x|^{2}.
\]
Using Fatou's lemma, we have $\bar{u}^{t,x}(\cdot)\in{\cal M}_{\mathscr{F}}^{2}(0,T;\mathbb{R}^{m})$.
Then we have the estimation (\ref{integrable}) from Lemma \ref{lem:3.1}.
Thus, Assertion (i) and the first part of the assertion (ii) have
been proved.\bigskip{}

Now we prove the optimality of $\bar{u}^{t,x}(\cdot)$ and the assertion
(iii). By \eqref{integrable}, we know for any stopping time $\tau$
valued in $[t,T]$, 
\[
\mathbb{E}^{\mathscr{F}_{t}}\Big[\big|\bar{X}^{t,x}(\tau)\big|^{2}\Big]\leq\mathbb{E}^{\mathscr{F}_{t}}\bigg[\sup_{s\in[t,T]}|\bar{X}^{t,x}(s)|^{2}\bigg]<C_{x},
\]
 hence $|\bar{X}^{t,x}|^{2}$ is uniformly integrable. Besides \eqref{integrable}
together with Chebyshev inequality shows that for any positive integer
$j$,
\[
\mathbb{P}\bigg(\sup_{s\in[t,T]}|\bar{X}^{t,x}(s)|\geq j\bigg)\leq\frac{\mathbb{E}^{\mathscr{F}_{t}}\bigg[\sup_{s\in[t,T]}|\bar{X}^{t,x}(s)|^{2}\bigg]}{j^{2}}\to0,\qquad{\rm as}\,j\to\infty.
\]
 It follows that $\,\mathbb{P}\{\gamma_{j}^{t,x}=T\}\nearrow1$. Combining
the dominate convergence theorem and the boundness of $K$, we have
the first term in right hand of \eqref{eq:verif-3} $\mathbb{E}^{\mathscr{F}_{t}}\bigg[\langle K(\gamma^{t,x})\bar{X}(\gamma^{t,x}),\bar{X}(\gamma^{t,x})\rangle\bigg]\to\mathbb{E}^{\mathscr{F}_{t}}\bigg[\langle K(T)\bar{X}^{t,x}(T),\bar{X}^{t,x}(T)\rangle\bigg]$
as $j\to\infty$. The $L^{2}$-boundness of $\bar{X}^{t,x}(\cdot)$
and $\bar{u}^{t,x}(\cdot)$ yields the second term in right hand of
\eqref{eq:verif-3} $\mathbb{E}^{\mathscr{F}_{t}}\bigg[\int_{t}^{\gamma_{j}^{t,x}}f(s,\bar{X}^{t,x}(s),\bar{u}^{t,x}(s))ds\bigg]\to\mathbb{E}^{\mathscr{F}_{t}}\bigg[\int_{t}^{T}f(s,\bar{X}^{t,x}(s),\bar{u}^{t,x}(s))ds\bigg]$
as $j\to\infty$. Hence \eqref{eq:verif-3} yields
\begin{align}
 & \langle K(t)x,x\rangle\nonumber \\
= & \lim_{j\to\infty}\mathbb{E}^{\mathscr{F}_{t}}\bigg[\langle K(\gamma_{j}^{t,x})\bar{X}^{t,x}(\gamma_{j}^{t,x}),\bar{X}^{t,x}(\gamma_{j}^{t,x})\rangle+\int_{t}^{\gamma_{j}^{t,x}}f(s,\bar{X}^{t,x}(s),\bar{u}^{t,x}(s))ds\bigg]\label{eq:verify-4}\\
= & \mathbb{E}^{\mathscr{F}_{t}}\bigg[\langle K(T)\bar{X}^{t,x}(T),\bar{X}^{t,x}(T)\rangle+\int_{t}^{T}f(s,\bar{X}^{t,x}(s),\bar{u}^{t,x}(s))ds\bigg]=J(\bar{u}^{t,x}(\cdot);0,x).\nonumber 
\end{align}

To obtain the optimality of $\bar{u}^{t,x}(\cdot)$, it remain to
show 
\[
J(u(\cdot);t,x)\geq\langle K(t)x,x\rangle,\quad\forall u(\cdot)\in{\cal M}_{\mathscr{F}}^{2}(t,T;\mathbb{R}^{m}).
\]
 To do this, for any $u(\cdot)\in{\cal M}_{\mathscr{F}}^{2}(t,T;\mathbb{R}^{m})$,
define the stopping times 
\[
\gamma_{j}^{t,x;u(\cdot)}=T\wedge\inf\{s\ge t||X^{t,x;u(\cdot)}(s)|\ge j\},\quad j\in\mathbb{Z}_{+}.
\]
 Same as $\gamma_{j}^{t,x}$, $\gamma_{j}^{t,x;u(\cdot)}\nearrow T$
and $\mathbb{P}\{\gamma_{j}^{t,x;u(\cdot)}=T\}\nearrow1$ as $j\to\infty$.
Define 
\[
\tilde{u}(s):=-\mathscr{N}^{-1}(s,K(s-),R(s,\cdot))\mathscr{M}(s,K(s-),L(s),R(s,\cdot))X^{0,x,u(\cdot)}(s-),\quad s\in[t,T].
\]
 Obviously, $\mathbb{E}\Big[\int_{t}^{\gamma_{j}^{t,x;u(\cdot)}}\big|\tilde{u}(t)\big|^{2}dt\Big]<\infty$.
Then applying It\^o formula to $\langle K(t)X^{t,x;u(\cdot)}(t),X^{t,x;u(\cdot)}(t)\rangle$
and by  straightforward computing, we get that
\begin{align}
 & \mathbb{E}^{\mathscr{F}_{t}}\bigg[\langle K(\gamma_{j}^{tx;u(\cdot)})X^{t,x;u(\cdot)}(\gamma_{j}^{t,x;u(\cdot)}),X^{t,x;u(\cdot)}(\gamma_{j}^{t,x;u(\cdot)})\rangle+\int_{t}^{\gamma_{j}^{t,x;u(\cdot)}}f(s,X^{t,x;u(\cdot)}(s),u(s))ds\bigg]\label{eq:verify-5}\\
= & \langle K(t)x,x\rangle+\mathbb{E}^{\mathscr{F}_{t}}\Big[\int_{t}^{\gamma_{j}^{t,x;u(\cdot)}}\big\langle\mathscr{N}^{-1}(s,K(s),R(s,\cdot))\big(u(s)-\tilde{u}(s)\big),u(s)-\tilde{u}(s)\big\rangle\Big]\nonumber \\
\geq & \langle K(t)x,x\rangle.\nonumber 
\end{align}
Since $u(\cdot)\in{\cal M}_{\mathscr{F}}^{2}(t,T;\mathbb{R}^{m})$,
according to the estimate \eqref{integrable}, similar to the limitation
in \eqref{eq:verify-4}, we take limit in \eqref{eq:verify-5}
\begin{align*}
 & J(u(\cdot);t,x)\\
= & \mathbb{E}^{\mathscr{F}_{t}}\bigg[\langle K(T)X^{t,x;u}(T),X^{t,x;u}(T)\rangle+\int_{t}^{T}f(s,X^{t,x;u}(s),u(s))ds\bigg]\\
\geq & \langle K(t)x,x\rangle.
\end{align*}

\end{proof}

According to the above verification theorem, we immediately have the
following uniqueness of the solution for BSREJ \eqref{eq:Riccati}.

\begin{thm}

Let Assumptions \ref{ass:4.1} be satisfied. Let $(\tilde{K},\tilde{L},\tilde{R})$
be another solution of BSREJ \eqref{eq:Riccati} in the meaning of
Definition \ref{def:solution}. Then $(\tilde{K},\tilde{L},\tilde{R})=(K,L,R)$.

\end{thm}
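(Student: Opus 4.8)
The plan is to read off uniqueness directly from the verification theorem, Theorem~\ref{thm:verific}, which carries all the analytic weight. The crucial observation is that Theorem~\ref{thm:verific}(iii) attaches to \emph{any} solution of BSREJ \eqref{eq:Riccati} the \emph{same} intrinsic object, namely the value field $V$ of Problem~\ref{pro:4.1}. Concretely, I would apply Theorem~\ref{thm:verific}(iii) to the constructed solution $(K,L,R)$ and, separately, to the competing solution $(\tilde{K},\tilde{L},\tilde{R})$, obtaining for each $(t,x)\in[0,T]\times\mathbb{R}^{n}$ that $\langle K(t)x,x\rangle=V(t,x)=\langle\tilde{K}(t)x,x\rangle$, $\mathbb{P}$-a.s. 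Hence $\langle(K(t)-\tilde{K}(t))x,x\rangle=0$ a.s. for each fixed $(t,x)$. To promote this pointwise identity to an equality of matrix-valued processes, I would let $x$ range over the countable family $\{e_{i},\,e_{i}\pm e_{j}\}$ and invoke the polarization formula \eqref{eq:3.9}, which recovers every entry of $K(t)-\tilde{K}(t)$ from those finitely many quadratic values (both $K$ and $\tilde{K}$ are $\mathbb{S}^{n}$-valued); since $K$ and $\tilde{K}$ are RCLL, equality on a countable dense set of times together with right continuity yields $K(t)=\tilde{K}(t)$ for \emph{all} $t\in[0,T]$ simultaneously, $\mathbb{P}$-a.s.

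Once $K$ and $\tilde{K}$ are indistinguishable, the integrands $L,R$ are identified by the uniqueness of the semimartingale decomposition. Both triples satisfy the integral equation \eqref{eq:integ-Riccati}, so in forward differential form
\begin{equation*}
\begin{split}
0=K(t)-\tilde{K}(t)={}&-\int_{0}^{t}\big[G(s,K(s-),L(s),R(s))-G(s,\tilde{K}(s-),\tilde{L}(s),\tilde{R}(s))\big]ds\\
&+\int_{0}^{t}\sum_{i=1}^{d}\big(L^{i}(s)-\tilde{L}^{i}(s)\big)dW_{s}^{i}+\int_{0}^{t}\int_{\Lambda}\big(R(s,e)-\tilde{R}(s,e)\big)\tilde{\mu}(de,ds).
\end{split}
\end{equation*}
The right-hand side is the sum of a continuous predictable finite-variation part, a continuous local martingale, and a purely discontinuous local martingale. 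Because a purely discontinuous martingale is orthogonal to every continuous one and the canonical decomposition is unique, each of the three parts must vanish identically. From the purely discontinuous part, $\int_{0}^{t}\int_{\Lambda}(R-\tilde{R})\tilde{\mu}=0$, and taking predictable quadratic variation gives $\int_{0}^{T}\int_{\Lambda}|R-\tilde{R}|^{2}\nu(de)dt=0$, i.e. $R=\tilde{R}$ for $\nu(de)\,dt\,d\mathbb{P}$-a.e.; from the continuous martingale part, $\int_{0}^{t}\sum_{i}(L^{i}-\tilde{L}^{i})dW_{s}^{i}=0$, whence $\int_{0}^{T}\sum_{i}|L^{i}-\tilde{L}^{i}|^{2}dt=0$, i.e. $L=\tilde{L}$ for $dt\,d\mathbb{P}$-a.e. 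This completes $(\tilde{K},\tilde{L},\tilde{R})=(K,L,R)$.

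A more transparent route to $R=\tilde{R}$, which I would mention as a cross-check, is through the jumps: since $\Delta K_{t}=\int_{\Lambda}R(t,e)\mu(de,\{t\})=R(t,\Delta p_{t})$ and likewise $\Delta\tilde{K}_{t}=\tilde{R}(t,\Delta p_{t})$, the indistinguishability of $K$ and $\tilde{K}$ forces $R=\tilde{R}$ along the jumps of the Poisson point process, and the integrability in condition (i) of Definition~\ref{def:solution} spreads this to $\nu$-almost every $e$. I expect the only genuinely delicate point to be the measure-theoretic bookkeeping in the first paragraph—passing from the family of pointwise-in-$(t,x)$ a.s.\ identities supplied by Theorem~\ref{thm:verific} to a single $\mathbb{P}$-null set outside which $K(t)=\tilde{K}(t)$ holds for all $t$—which is precisely why the countability of the test vectors and the RCLL regularity are invoked. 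Everything downstream (the orthogonal splitting of the martingale part and the vanishing of the bracket) is then routine, and no new estimates beyond those already established for the verification theorem are required.
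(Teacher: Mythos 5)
Your proof is correct, and for the first unknown it follows the paper's own skeleton: both you and the paper obtain $K=\tilde K$ by applying the verification theorem to each solution and identifying both quadratic forms with the value field $V$; your polarization with the countable test vectors $\{e_i,e_i\pm e_j\}$, rational times and RCLL regularity is exactly the measure-theoretic bookkeeping that the paper compresses into the single sentence ``the uniqueness of $V$ leads to that of $K$.'' Where you genuinely diverge is in identifying $(L,R)$. For $R$ the two arguments are essentially equivalent: the paper compares the pure-jump parts $\sum_{s\le t}\Delta K_s=\int_0^t\int_\Lambda R\,\mu(de,ds)$ and $\sum_{s\le t}\Delta\tilde K_s=\int_0^t\int_\Lambda\tilde R\,\mu(de,ds)$, takes the bracket of the difference and then expectation, which is the same computation as your vanishing predictable quadratic variation. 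For $L$, however, the paper argues through the uniqueness of the optimal control together with the feedback representation \eqref{eq:feedBackCont}, whereas you invoke the uniqueness of the canonical decomposition of a special semimartingale and the orthogonality of continuous and purely discontinuous local martingales, then conclude via the Brownian quadratic variation. Your route is not only more direct but strictly more robust: the feedback law sees $L$ only through $\mathscr{M}$, i.e.\ through the combination $\sum_i D^{i*}(L^i-\tilde L^i)$, so the paper's argument at best yields $\sum_i D^{i*}(L^i-\tilde L^i)=0$ along the optimal flow and cannot separate $L$ from $\tilde L$ when $D$ is degenerate (e.g.\ $D\equiv 0$), while your decomposition argument needs no nondegeneracy of $D$ and gives $L=\tilde L$, $dt\otimes d\mathbb{P}$-a.e., outright. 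One cosmetic caveat: in your ``cross-check'' for $R$, equality along jumps propagates to $\nu$-a.e.\ $e$ via the compensator identity $\mathbb{E}\int_0^T\int_\Lambda|R-\tilde R|^2\mu(de,ds)=\mathbb{E}\int_0^T\int_\Lambda|R-\tilde R|^2\nu(de)ds$ (the paper's bracket-and-expectation step), not merely via the integrability required in Definition \ref{def:solution}(i).
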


\begin{proof}

In view of \eqref{eq:V-K}, the uniqueness of value function $V$
leads to that of first unknown variable $K$ of solution for BSREJ
\eqref{eq:Riccati}, hence $\tilde{K}=K$. By the expression of BSREJ
\eqref{eq:Riccati}, the integration w.r.t. $\tilde{\mu}$ is just
pure jump martingale, hence  
\begin{align*}
\sum_{s\leq t}\Delta K_{s} & =\int_{0}^{t}\int_{\Lambda}R(t,e)\mu(de,ds),\\
\sum_{s\leq t}\Delta\tilde{K}_{s} & =\int_{0}^{t}\int_{\Lambda}\tilde{R}(t,e)\mu(de,ds).
\end{align*}
Comparing the above two equality, taking the quadratic variation (the
bracket) , and then taking expectation on both sides, we have 
\[
0=\mathbb{E}\Big[\sum_{s\leq t}\Delta\big(K_{s}-\tilde{K}_{s}\big),\sum_{s\leq t}\Delta\big(K_{s}-\tilde{K}_{s}\big)\Big]=\mathbb{E}\Big[\int_{0}^{t}\int_{\Lambda}\big(R-\tilde{R}\big)^{2}\nu(de)ds\Big].
\]
 This means $\tilde{R}=R$. 

With the uniqueness of the first and third unknown variables $(K,R)$
in hand, the uniqueness of the optimal control and its feedback form
\eqref{eq:feedBackCont} yields the uniqueness of the second unknown
variable $L$.\end{proof}

\bibliographystyle{plain}
\bibliography{LQJ}

\end{document}